\documentclass[11pt,a4paper]{article}
\usepackage[utf8]{inputenc}
\usepackage{amsmath,amssymb,amsthm,amsfonts,mathtools,mathrsfs}
\usepackage[margin=1in]{geometry}
\usepackage{booktabs,enumitem,xcolor,graphicx,hyperref,tikz}
\usetikzlibrary{arrows.meta,positioning}
\hypersetup{colorlinks=true,linkcolor=blue!50!black,citecolor=blue!50!black}

\theoremstyle{plain}
\newtheorem{theorem}{Theorem}[section]
\newtheorem{proposition}[theorem]{Proposition}
\newtheorem{lemma}[theorem]{Lemma}
\newtheorem{corollary}[theorem]{Corollary}
\theoremstyle{definition}
\newtheorem{definition}[theorem]{Definition}
\newtheorem{assumption}[theorem]{Assumption}
\theoremstyle{remark}
\newtheorem{remark}[theorem]{Remark}

\newcommand{\EE}{\mathbb{E}}
\newcommand{\PP}{\mathbb{P}}

\newcommand{\Cov}{\mathrm{Cov}}

\newcommand{\PG}{\mathcal P_G}
\newcommand{\nloc}{n_{\mathrm{loc}}}

\title{Equivariance, Curvature and Symmetry in\\
	Functional Covariance Estimation}
\author{Jocelyn Nemb\'e\\
	\small Laboratory of Engineering Applied to Business Management\\
	\small P.O Box 190, Libreville, Gabon\\
	\small \texttt{jnembe@hotmail.com}}
\date{August 2026}

\begin{document}
\maketitle

\begin{abstract}
Statistical procedures for functional data are routinely applied after changes
of time scale, registration, or other reparametrisations, although it is
generally unclear when the resulting inference is independent of the chosen
coordinates. We characterize this equivariance for local-linear covariance
estimation from sparsely observed functional data. At the population level,
covariance operators are unitarily conjugate under every diffeomorphic
reparametrisation. At the estimation level, exact commutation holds universally
if and only if the reparametrisation is affine. For a general \(C^{2,1}\)
diffeomorphism, departure from equivariance is controlled by the normalized
curvature \(\kappa_\psi=\|\psi''/\psi'\|_\infty\), with local-linear defect
\[
O_P\!\left\{\kappa_\psi
\left(h^2+h n_{\mathrm{loc}}^{-1/2}
+h_0^2+h_0 n_{\mathrm{loc},1}^{-1/2}\right)\right\}.
\]
Thus zero curvature is exactly the boundary of statistical equivariance.
We then show that finite-group symmetry acts as an orthogonal-projection
regularizer: its risk gain is exactly the anti-invariant estimation error minus
the squared symmetry misspecification. An orbit-covariance identity quantifies
the attainable variance reduction and shows why group size alone does not
determine the gain. These principles propagate to eigenvalues, eigenspaces and
truncated PACE prediction. The results separate coordinate invariance at the
population level from the geometric obstructions introduced by statistical
smoothing.
\end{abstract}

\noindent\textbf{Keywords:} functional covariance; equivariance;
reparametrisation; curvature; symmetry; sparse functional data; FPCA.

\noindent\textbf{MSC 2020:} 62R10, 62G05, 62H25.

\tableofcontents

\section{Introduction}
\label{sec:intro}

A statistical analysis of functional data should ideally describe the
underlying random object rather than an arbitrary coordinate system used to
record it. Yet functional observations are routinely re-expressed before
analysis: chronological time is replaced by biological time, physical time by
phase, or a nonlinear registration map is used to align trajectories. Such
changes are harmless at the population level when they amount to a unitary
change of representation. They need not be harmless after estimation.

This distinction is particularly sharp for sparse functional data. We observe
\begin{equation}
\label{eq:model}
Y_{ij}=X_i(T_{ij})+\varepsilon_{ij},
\qquad i=1,\ldots,n,\quad j=1,\ldots,N_i,
\end{equation}
with only a small number of irregular measurements per subject. Individual
curves cannot be reconstructed reliably, so covariance estimation proceeds by
pooling off-diagonal products and smoothing them over the two-dimensional
domain \cite{JamesHastieSugar2000,YaoMuellerWang2005,HallMuellerWang2006}.
The resulting covariance operator then drives FPCA and conditional-expectation
prediction. The smoothing step introduces a metric scale---the bandwidth---and
therefore creates a potential dependence on the chosen coordinates.

The central question of this paper is:

\begin{quote}
\emph{When does a change of coordinates preserve functional covariance
estimation, and what is the statistical price when it does not?}
\end{quote}

The answer has a simple geometric form. Population covariance is coordinate
invariant under unitary transport. Local-linear covariance estimation is
exactly equivariant, for every admissible design and every response array, if
and only if the coordinate change is affine. Non-affinity enters through
curvature: after the bandwidth is transported with the local metric,
\[
\kappa_\psi=\left\|\frac{\psi''}{\psi'}\right\|_\infty
\]
controls the failure of the statistical procedure to commute with the change
of coordinates. Thus affinity is not merely a convenient sufficient condition;
it is the exact zero-curvature boundary of universal equivariance.

A second structural operation, finite-group symmetry, fits the same viewpoint.
If \(G\) acts isometrically on the domain, averaging an estimated covariance
over the group is an orthogonal projection onto the invariant subspace. This
turns symmetry into a statistical regularizer: it removes the anti-invariant
component of estimation error, but under approximate symmetry it pays exactly
for the anti-invariant component of the truth. Coordinate equivariance and
symmetry regularization are therefore two manifestations of a common question:
which features of functional covariance are intrinsic, and which are artifacts
of representation?

\begin{figure}[t]
\centering
\begin{tikzpicture}[
	node distance=18mm and 27mm,
	box/.style={draw, rounded corners, align=center, inner sep=6pt},
	>=Latex
]
\node[box] (pop) {population covariance\\ \(\mathcal C\)};
\node[box, right=of pop] (popt) {transported covariance\\
\(\mathcal C^{(\psi)}\)};
\node[box, below=of pop] (est) {smoothed estimator\\
\(\widehat{\mathcal C}\)};
\node[box, right=of est] (estt) {transported smoother\\
\(\widehat{\mathcal C}^{(\psi)}\)};
\draw[->] (pop) -- node[above]{\(U_\psi(\cdot)U_\psi^*\)} (popt);
\draw[->] (pop) -- node[left]{estimation} (est);
\draw[->] (popt) -- node[right]{estimation} (estt);
\draw[->] (est) -- node[below]{\(U_\psi(\cdot)U_\psi^*\)} (estt);
\node[align=center, below=7mm of estt] {\(\text{commutes universally}
\iff \psi \text{ affine}\)\\
non-affine defect \(\propto \kappa_\psi\)};
\end{tikzpicture}
\caption{Population transport always commutes with covariance construction.
Statistical smoothing introduces a coordinate dependence: for the local-linear
covariance estimator the diagram commutes universally exactly for affine
reparametrisations. Curvature quantifies the non-affine defect.}
\label{fig:commuting-principle}
\end{figure}

\subsection*{Main result: the equivariance--curvature--symmetry principle}

The paper can be summarized by the following theorem. Its four parts are proved
separately below because they rely on different levels of statistical
regularity.

\begin{theorem}[Equivariance--curvature--symmetry principle]
\label{thm:main-principle}
Let \(\mathcal C\) be the covariance operator of a sparsely observed functional
process, let \(\widehat{\mathcal C}_{h}\) be the operator induced by the
two-dimensional local-linear covariance smoother, and let \(G\) be a finite
group of domain isometries.

\begin{enumerate}[label=\textup{(\roman*)},leftmargin=*]
\item \emph{Population coordinate invariance.}
For every admissible diffeomorphism \(\psi\), with the push-forward reference
measure,
\[
\boxed{
\mathcal C^{(\psi)}
=
U_\psi\mathcal C U_\psi^*.
}
\]
Hence population eigenvalues are unchanged and spectral projectors are
unitarily transported.

\item \emph{Exact statistical equivariance boundary.}
With the bandwidth transported by the local metric, the identity
\[
\widehat{\mathcal C}^{(\psi)}
=
U_\psi\widehat{\mathcal C}U_\psi^*
\]
holds for every admissible finite design and every response array if and only
if \(\psi\) is affine.

\item \emph{Curvature controls failure of equivariance.}
For a \(C^{2,1}\) diffeomorphism, under the local regularity and Gram-matrix
conditions of Theorem~\ref{thm:transport},
\[
\boxed{
\bigl\|
\widehat{\mathcal C}^{(\psi)}
-U_\psi\widehat{\mathcal C}U_\psi^*
\bigr\|_{\mathrm{HS}}
=
O_P\!\left[
\kappa_\psi
\left\{
h^2+h n_{\mathrm{loc}}^{-1/2}
+h_0^2+h_0 n_{\mathrm{loc},1}^{-1/2}
\right\}
\right].
}
\]
In particular, the exact class in part (ii) is the zero-curvature class
\(\kappa_\psi=0\).

\item \emph{Symmetry is projection regularization.}
Let \(P=\Pi_G\) and \(\widehat C^G=P\widehat C\). For arbitrary \(C\),
\[
\boxed{
\|\widehat C-C\|^2-\|\widehat C^G-C\|^2
=
\|(I-P)(\widehat C-C)\|^2-\|(I-P)C\|^2.
}
\]
Thus exact symmetry removes anti-invariant estimation error without bias,
whereas approximate symmetry produces an exact noise-removal versus
misspecification tradeoff.
\end{enumerate}
\end{theorem}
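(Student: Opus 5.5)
The four parts use different tools, so I will prove them separately. Parts (i) and (iv) are exact algebraic identities. Part (ii) is an exactness statement about a finite weighted least-squares problem. Part (iii) is a perturbation bound that will be assembled from the forthcoming Theorem~\ref{thm:transport}.

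\textbf{Part (i).} Define \(U_\psi f=f\circ\psi^{-1}\). With the push-forward reference measure \(\mu^{(\psi)}=\psi_\#\mu\), this map is unitary from \(L^2(\mu)\) to \(L^2(\mu^{(\psi)})\) by the change-of-variables formula. The transported process is \(X^{(\psi)}=X\circ\psi^{-1}\), so its covariance kernel is \(C^{(\psi)}(s,t)=C(\psi^{-1}(s),\psi^{-1}(t))\). For any \(f\),
\[
\mathcal C^{(\psi)}f(s)=\int C(\psi^{-1}s,\psi^{-1}t)f(t)\,d\mu^{(\psi)}(t)=\int C(\psi^{-1}s,u)f(\psi u)\,d\mu(u)=(U_\psi\mathcal C U_\psi^*f)(s).
\]
Unitary conjugation preserves the spectrum and maps the spectral projectors \(\Pi_k\) to \(U_\psi\Pi_kU_\psi^*\).

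\textbf{Part (ii).} At a target point \((s,t)\), the local-linear estimate is the intercept of a weighted least-squares fit. The weights are \(K_h\) evaluated at the transported design differences \(\psi(T_{ij})-s\). The transported bandwidth is \(h\psi'(\psi^{-1}s)\).

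For the "if" direction, take affine \(\psi(x)=ax+b\). The design differences are then exactly rescaled by \(a\). The kernel weights match after the bandwidth rescaling, and the linear design matrix transforms by an invertible triangular change of basis. The intercept is invariant under such a change of basis, so the smoother commutes with the transport pointwise. The same argument applies to the diagonal smoother at bandwidth \(h_0\). Integrating, using part (i), gives the operator identity.

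For the "only if" direction, I will use a test design and response array. I choose the responses to be a linear function of the original coordinates, \(Y=\alpha+\beta T\) (products for the two-dimensional case). The original smoother reproduces this function exactly. In the new coordinates, the responses equal \(\alpha+\beta\psi^{-1}(\cdot)\), which is linear only if \(\psi^{-1}\) is affine on the kernel window. If \(\psi''(x_0)\neq0\) at some point, I place three design points in a small window around \(\psi(x_0)\). The transported local-linear fit then differs from the target value by a nonzero multiple of \(\psi''\); I expect this to follow from a Taylor expansion for a sufficiently small window. This contradicts universality, so \(\psi''\equiv0\).

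\textbf{Part (iii).} I will take the componentwise bound from Theorem~\ref{thm:transport}. Expanding \(\psi(T)-\psi(x)=\psi'(x)(T-x)+\tfrac12\psi''(\xi)(T-x)^2\) inside the window and normalising by \(\psi'\) shows two things. The transported weights and design coincide with the original ones up to a relative perturbation of order \(\kappa_\psi h\), using the \(C^{2,1}\) Lipschitz control on \(\psi''\). Under the Gram-matrix condition, these perturbations enter the intercept linearly, multiplied by the local residuals. The local residuals have two components: a bias part of size \(O(h)\), giving the \(h^2\) term, and a stochastic part of size \(O_P(n_{\mathrm{loc}}^{-1/2})\), giving the \(hn_{\mathrm{loc}}^{-1/2}\) term. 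Repeating the argument for the diagonal variance smoother gives the \(h_0\) terms. Squaring and integrating then yields the HS bound.

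This step is the main obstacle. I need uniformity of the Gram-matrix inverse perturbation over \((s,t)\), so I will rely on the stated conditions to supply a uniform lower eigenvalue bound. Setting \(\kappa_\psi=0\) recovers part (ii).

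\textbf{Part (iv).} Group averaging is \(P=|G|^{-1}\sum_g U_g(\cdot)U_g^*\). Because each \(U_g\) is unitary and \(G\) is a group, \(P\) is idempotent and self-adjoint in the HS inner product, so it is an orthogonal projection. Write \(E=\widehat C-C\). Then \(\widehat C^G-C=PE-(I-P)C\). By Pythagoras, \(\|E\|^2=\|PE\|^2+\|(I-P)E\|^2\) and \(\|\widehat C^G-C\|^2=\|PE\|^2+\|(I-P)C\|^2\). Subtracting the two identities gives the boxed formula.
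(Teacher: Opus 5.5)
The proposal is sound. Parts (i), (iv) and the affine half of (ii) follow the paper's own arguments: change of variables under $\psi_\#\mu$; the centred design matrix is right-multiplied by $\operatorname{diag}(1,c,c)$, with evenness of $K$ handling $c<0$; and Pythagoras for $P$. For (iii) you, like the paper, ultimately cite Theorem~\ref{thm:transport}(iii) and integrate.

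\textbf{Necessity in (ii).} Here your argument is genuinely different. The paper (Corollary~\ref{cor:affine}) computes the continuum first variation $\Xi_K$ of Lemma~\ref{lem:first-variation}, chooses a response with $\int z\,\Xi_K\neq0$, and returns to a finite design by Riemann approximation. You instead use exact reproduction of affine responses by the original fit, which is more elementary and needs no kernel calculus. Two details need fixing in two dimensions.
\begin{itemize}
\item Take the response array itself affine in the original pair coordinates, e.g.\ $Z_r=S_r$, not products. The two-dimensional local-linear smoother does not reproduce bilinear surfaces. And if you pass through observations $Y_{ij}=\alpha+\beta T_{ij}$, the exactly reproduced mean makes every centred product vanish.
\item Use three non-collinear points, two of them at $(\psi(x_0)\mp a,\psi(t_0))$, symmetric about the transported query.
\end{itemize}
Both fits then interpolate. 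The original fit returns $x_0$, while the transported fit returns $\tfrac12\{\psi^{-1}(\psi(x_0)-a)+\psi^{-1}(\psi(x_0)+a)\}\neq x_0$, by strict convexity or concavity of $\psi^{-1}$ near $\psi(x_0)$. No Taylor expansion is needed. Continuity of the fit in the query point turns this pointwise discrepancy into a discrepancy of operators.

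\textbf{Mechanism for (iii).} Your mechanism also differs from the paper's. The paper gets $h^2$ from odd-moment cancellation in $\Delta\Gamma$ and $\Delta b$ separately. You get it as a weight perturbation $O(\kappa_\psi h)$ times a local oscillation $O(h)$.

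The identity that makes your version work, and which you should state, is that both intercepts are linear smoothers $\sum_r\ell_rZ_r$ and $\sum_r\tilde\ell_rZ_r$ that reproduce constants. Their difference is therefore $\sum_r(\tilde\ell_r-\ell_r)\{Z_r-C(s,t)\}$, with $\sum_r|\tilde\ell_r-\ell_r|=O_P(\kappa_\psi h)$ on $\mathcal G_{n,h}$. Without this step the perturbation meets $Z_r=O_P(1)$ and gives only $O(\kappa_\psi h)$.

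Your route is the more robust of the two. The first-order part of the $(1,2)$ entry of $\Delta\Gamma$ is $h\,a_s$ times an average of $K(v)\{u^3K'(u)+u^2K(u)\}$. This function is even in both coordinates and integrates to $\kappa_K=-2\mu_2\neq0$, which is the $\dot S_1(0)$ of Lemma~\ref{lem:first-variation}; the same holds for the $(1,3)$ entry. Hence the rates in \eqref{eq:DeltaGamma}--\eqref{eq:Deltab} fail for $\Delta\Gamma$ and $\Delta b$ separately. They hold only for the combination $\Delta b-\Delta\Gamma\,\Gamma^{-1}b$ that actually enters the intercept, which is exactly your residual-weighted perturbation.

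Your mechanism also uses nothing specific to local-linear fitting. It gives the same $O(\kappa_\psi h^2)$ for Nadaraya--Watson, which is in tension with Remark~\ref{rem:nw}.

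\textbf{Correction on the $h_0$ terms.} The $h_0$ and $n_{\mathrm{loc},1}$ terms do not come from a diagonal variance smoother; that smoother plays no role in $\hat C$, because the diagonal is excluded. They come from the mean estimate $\hat{\mathfrak m}_{h_0}$ inside the raw products. So $Z^{(\psi)}_r\neq Z_r$, and you must add the response perturbation $\sum_r\tilde\ell_r(Z^{(\psi)}_r-Z_r)$, controlled via Lemma~\ref{lem:mean}. Likewise, in the affine case your argument ``at bandwidth $h_0$'' must be applied to the mean smoother, since that is what guarantees $Z^{(\psi)}=Z$.
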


\begin{proof}
Part (i) is Theorem~\ref{thm:transport}(i). Part (ii) follows from
Theorem~\ref{thm:transport}(ii) and Corollary~\ref{cor:affine}. Part (iii) is
Theorem~\ref{thm:transport}(iii), including the contribution of mean
estimation. Part (iv) is Theorem~\ref{thm:projection-risk}.
\end{proof}

The theorem separates two levels that are often conflated. At the
\emph{population} level, a diffeomorphic change of coordinates is a unitary
change of representation. At the \emph{statistical} level, a smoothing
procedure need not respect that representation. The local-linear estimator is
special because its first-order moment correction cancels the leading odd
kernel perturbation induced by curvature. This explains both the exact affine
result and the second-order deterministic term in the non-affine defect.

The symmetry identity in part (iv) supplies the complementary regularization
principle. Under exact invariance,
\[
\|\widehat C^G-C\|^2
\le
\|\widehat C-C\|^2
\]
pathwise. Under misspecification, projection is beneficial precisely when the
anti-invariant estimation error exceeds the true symmetry defect. Section
\ref{sec:effective-group-size} further shows that the attainable variance
reduction is governed by covariance along group orbits rather than by
\(|G|\) alone.

\paragraph{Consequences.}
Once covariance error is controlled, the same two principles propagate through
the functional-data pipeline. Weyl's inequality transfers the covariance bound
to eigenvalues, Davis--Kahan transfers it to isolated eigenspaces with the
usual inverse spectral-gap factor, and truncated PACE prediction inherits the
improvement under the additional evaluation-stability condition required for
pointwise eigenfunction evaluations. These are consequences of the main
principle rather than separate organizing themes.

\paragraph{Relation to sparse FDA.}
The paper uses the standard PACE setting
\cite{YaoMuellerWang2005} and the classical stochastic scales for sparse
covariance smoothing \cite{LiHsing2010,ZhangWang2016}. Its contribution is not
a new minimax rate for covariance estimation. It identifies the geometric
conditions under which the estimator is coordinate-equivariant, quantifies the
obstruction when it is not, and gives an exact risk interpretation of symmetry
enforcement.

\paragraph{Organization.}
Section~\ref{sec:framework} introduces the sparse functional-data setting.
Section~\ref{sec:transport} establishes population transport, the affine
characterization and the curvature defect. Section~\ref{sec:equivariance}
develops symmetry as projection regularization and quantifies orbit-dependent
gain. Section~\ref{sec:spectral-propagation} gives the spectral consequences,
Section~\ref{sec:prediction} treats truncated PACE prediction, and
Section~\ref{sec:simulations} examines the predicted regimes numerically.

\paragraph{Scope.}
The exact transport, affine characterization, projection and risk identities
are structural. Quantitative non-affine rates require the regularity conditions
stated in Section~\ref{sec:transport}; explicit sparse-design gain rates further
use the orbit-localization and variance-scale assumptions. We do not claim a
new minimax theory, sharp action-specific saturation constants, or a universal
pointwise FPCA perturbation theorem.

\section{Sparse functional data and the PACE framework}
\label{sec:framework}

Let \((E,d,\mu)\) be a compact metric measure space, in applications a compact
interval or a circle. In model \eqref{eq:model} the \(X_i\) are i.i.d.\ copies of
a random element \(X\) of \(L^2(E,\mu)\) with mean function
\(\mathfrak m(t)=\EE X(t)\) and covariance surface
\(C(s,t)=\Cov(X(s),X(t))\); the observation times \(T_{ij}\) are i.i.d.\ with
density \(f_T\) with respect to \(\mu\); the errors \(\varepsilon_{ij}\) are
i.i.d.\ centred with variance \(\sigma^2\), independent of the process.

PACE proceeds in three steps. The mean is estimated by pooling all observations
and smoothing, giving \(\hat{\mathfrak m}_{h_0}\), a one-dimensional
local-linear smoother of the pairs \((T_{ij},Y_{ij})\). The covariance surface
is estimated by pooling all off-diagonal products from the same subject,
\begin{equation}
	\label{eq:raw}
	Z_{ijk}=\bigl(Y_{ij}-\hat{\mathfrak m}(T_{ij})\bigr)
	\bigl(Y_{ik}-\hat{\mathfrak m}(T_{ik})\bigr),\qquad j\neq k,
\end{equation}
located at \((T_{ij},T_{ik})\), and applying a two-dimensional smoother; the
diagonal is excluded because it carries \(\sigma^2\). Eigenfunctions and scores
follow, the latter by conditional expectation
(Section~\ref{sec:prediction}).

Throughout, \(\hat C_{n,h}\) denotes the two-dimensional local-linear smoother
of the data \eqref{eq:raw} with product kernel \(K_h(u)=h^{-1}K(u/h)\); we drop
the subscript \(h\) when it is not at issue.

\begin{assumption}
	\label{ass:main}
	\begin{enumerate}[label=\textup{(F\arabic*)},leftmargin=*]
		\item \(f_T\) is bounded between two positive constants \(a\) and \(b\).
		\item \(\EE[N_i(N_i-1)]>0\) and \(\EE N_i^{4}<\infty\).
		\item \(\EE\|X\|_\infty^4<\infty\) and \(\EE\varepsilon^4<\infty\).
		\item \(K\) is symmetric, supported in \([-1,1]\), \(\int K=1\),
		and \(C^{1,1}\) on the interior of its support, with bounded derivative
		and Lipschitz derivative. In particular \(K\) is Lipschitz and
		\(u\mapsto u^{2}K'(u)\) is integrable and odd.
		\item For the quantitative non-affine defect only, the design density
		\(f_T\), the regression surface \(C\), and the mean \(\mathfrak m\) are
		locally Lipschitz on the interior of their domains. This assumption is
		not needed for the exact population transport or the affine equivariance
		statement.
	\end{enumerate}
\end{assumption}

The exact population identity and the exact affine equivariance below require no
smoothness of \(C\).  A mild local Lipschitz condition is invoked only for the
sharper \(O(h^2)\) deterministic part of the non-affine defect: without
regularity of the regression surface, a universal comparison of two nearby
smoothing operators can in general be only of first order.  No growth condition
on \(N_i\) is needed for the exact identities; the probabilistic defect bound is
stated in terms of the effective local pair count.

\section{Coordinate equivariance and the curvature obstruction}
\label{sec:transport}

The purpose of this section is to determine when covariance estimation is
intrinsic to the underlying functional process rather than to its chosen time
coordinate. We first separate the population statement, which is exactly
coordinate-free, from the estimator statement, where smoothing creates the
obstruction.

\subsection{Deterministic two-dimensional local-linear operator}
\label{sec:det-ll}

For the equivariance arguments we write the covariance smoother as the genuine
two-dimensional weighted least-squares operator. Given
\(\mathcal D=\{(S_r,T_r,Z_r):1\le r\le M\}\), a query point \((s,t)\), and
\(w_r=K_h(S_r-s)K_h(T_r-t)\), let
\[
X_{s,t}=
\begin{pmatrix}
1&S_1-s&T_1-t\\
\vdots&\vdots&\vdots\\
1&S_M-s&T_M-t
\end{pmatrix},
\qquad
W_{s,t}=\operatorname{diag}(w_1,\ldots,w_M).
\]
Whenever \(X_{s,t}^{\top}W_{s,t}X_{s,t}\) is nonsingular,
\begin{equation}
\label{eq:ll-matrix}
\widehat C_h(s,t)
=
e_1^\top(X_{s,t}^{\top}W_{s,t}X_{s,t})^{-1}
X_{s,t}^{\top}W_{s,t}Z .
\end{equation}
Thus the standard two-dimensional local-linear covariance smoother is not
identified with a product of two one-dimensional local-linear weight vectors.

\subsection{Transported data and transported process}

Let \(\psi:E\to E'\) be a \(C^2\)-diffeomorphism with
\begin{equation}
	\label{eq:jacobian}
	0<c_*\le|\psi'(t)|\le c^*<\infty,
	\qquad \|\psi''\|_\infty<\infty,
	\qquad \psi''\ \text{Lipschitz}.
\end{equation}
The transported observations keep their values and move their times,
\[
Y^{(\psi)}_{ij}:=Y_{ij},\qquad T^{(\psi)}_{ij}:=\psi(T_{ij}),
\]
and the latent process is transported by the unitary operator \(U_\psi\) defined
by \(X^{(\psi)}(\psi(t))=X(t)\), so that
\(Y^{(\psi)}_{ij}=X^{(\psi)}_i(T^{(\psi)}_{ij})+\varepsilon_{ij}\): the
transported data obey the same model on \(E'\), with pushed-forward reference
measure \(\mu^{(\psi)}:=\psi_{\#}\mu\).

\begin{definition}[Twin sparse covariance estimator]
	\label{def:twin}
	For a bandwidth function \(h'\) on \(E'\), the \emph{twin sparse covariance
		estimator} \(\hat C^{(\psi)}_{h'}\) is the two-dimensional local-linear
	smoother built from the transported triples
	\(\bigl(T^{(\psi)}_{ij},T^{(\psi)}_{ik},Z^{(\psi)}_{ijk}\bigr)_{j\neq k}\) on
	\(E'\times E'\), where \(Z^{(\psi)}_{ijk}\) is formed as in \eqref{eq:raw}
	from the transported data and the mean estimate computed on \(E'\).
\end{definition}

The bandwidth deserves a comment, because it is where the whole question lies.
A smoother is defined relative to a metric, and \(\psi\) does not preserve
metrics. The only choice that can possibly reproduce the original smoother is
the locally rescaled one,
\begin{equation}
	\label{eq:bandwidth-rule}
	h'(s'):=h\,\bigl|\psi'\bigl(\psi^{-1}(s')\bigr)\bigr| ,
\end{equation}
which maps a window of width \(h\) around \(s\) to a window of width
\(h'(\psi(s))\) around \(\psi(s)\) to first order. We adopt
\eqref{eq:bandwidth-rule} throughout, for the mean smoother as well as for the
covariance smoother.

\subsection{The mean step}

The products \eqref{eq:raw} are not raw data: they involve
\(\hat{\mathfrak m}\), itself the output of a smoother, and therefore itself
subject to the question we are asking. The following lemma disposes of this
first, so that the analysis of the covariance step may treat the responses as
given.

\begin{lemma}[Transport of the mean estimate]
	\label{lem:mean}
	Let \(\hat{\mathfrak m}_{h_0}\) be the one-dimensional local-linear mean
	estimator and let \(\hat{\mathfrak m}^{(\psi)}_{h_0'}\) be its transported
	counterpart with the locally rescaled bandwidth. Then
	\[
	\hat{\mathfrak m}^{(\psi)}_{h_0'}\circ\psi
	=
	\hat{\mathfrak m}_{h_0}
	\quad\text{identically if \(\psi\) is affine}.
	\]
	For a general \(C^{2,1}\) diffeomorphism, on the corresponding
	one-dimensional Gram-matrix regularity event,
	\[
	\bigl\|
	\hat{\mathfrak m}^{(\psi)}_{h_0'}\circ\psi
	-
	\hat{\mathfrak m}_{h_0}
	\bigr\|_{L^2(E)}
	=
	O_P\!\left(
	\kappa_\psi
	[h_0^2+h_0 n_{\mathrm{loc},1}^{-1/2}]
	\right),
	\]
	where \(n_{\mathrm{loc},1}\) is the effective number of observations in a
	one-dimensional mean-smoothing window. Consequently the perturbation of the
	raw covariance responses contributes at the same order in the local
	quadratic averages used by the covariance smoother.
\end{lemma}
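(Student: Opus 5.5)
Write the one-dimensional local-linear mean smoother at a point \(t\) as the weighted least-squares problem with design rows \((1,T_{ij}-t)\) and weights \(K_{h_0}(T_{ij}-t)\). The transported estimator at \(\psi(t)\) uses rows \((1,\psi(T_{ij})-\psi(t))\) and weights \(K_{h_0'(\psi(t))}(\psi(T_{ij})-\psi(t))\), with \(h_0'(\psi(t))=h_0|\psi'(t)|\). The whole comparison is carried out pointwise in \(t\), on the same responses \(Y_{ij}\), by comparing these two weighted regressions.

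\emph{Affine case.} Let \(\psi(u)=au+b\). Then \(\psi(T_{ij})-\psi(t)=a(T_{ij}-t)\) and \(h_0'=h_0|a|\). Because \(K\) is symmetric, the transported weights equal \(|a|^{-1}\) times the original ones. The transported design matrix is \(XD\) with \(D=\operatorname{diag}(1,a)\). A common positive scalar factor in \(W\) cancels in the normal equations, and the intercept \(e_1^\top(DX^\top WXD)^{-1}DX^\top WY=e_1^\top D^{-1}(X^\top WX)^{-1}X^\top WY\) is unchanged since \(e_1^\top D^{-1}=e_1^\top\). This gives the identity pointwise, for every design and every response vector.

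\emph{Non-affine case.} Taylor-expand with the \(C^{2,1}\) bound. For \(|T_{ij}-t|\le h_0/c_*\)-type windows,
\[
\psi(T_{ij})-\psi(t)=\psi'(t)\,\Delta_{ij}\bigl(1+\tfrac12\rho(t)\Delta_{ij}+O(\kappa_\psi\Delta_{ij}^2)\bigr),\qquad \rho=\psi''/\psi',\quad \Delta_{ij}=T_{ij}-t,
\]
so \(|\rho|\le\kappa_\psi\). Rescaling the design column by \(\psi'(t)\), as in the affine case, shows the transported fit equals a local-linear fit in original coordinates with two perturbations:
\begin{itemize}
\item regressors \(\Delta(1+\tfrac12\rho\Delta)\) instead of \(\Delta\);
\item weights \(K\bigl(\tfrac{\Delta}{h_0}(1+\tfrac12\rho\Delta)\bigr)\), which by the Lipschitz derivative of \(K\) (F4) equal \(K(\Delta/h_0)+\tfrac12\rho\,h_0\,(\Delta/h_0)^2K'(\Delta/h_0)+O(\kappa_\psi h_0^2)\), to be understood as a relative perturbation of order \(\kappa_\psi h_0\).
\end{itemize}
Write the difference of intercepts as \(e_1^\top(A^{-1}b-\tilde A^{-1}\tilde b)\). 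Expand with the resolvent identity on the Gram-regularity event, where the smallest eigenvalue of the normalized \(2\times2\) Gram matrix is bounded below.

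The key step, and the main obstacle, is to show this difference is second order in the deterministic part rather than of order \(\kappa_\psi h_0\). Decompose \(Y_{ij}=\mathfrak m(T_{ij})+e_{ij}\). The local-linear fit reproduces linear functions exactly, so the \(\mathfrak m(t)+\mathfrak m'(t)\Delta\) part contributes zero in both fits; the regressor perturbation only alters the linear reproduction at order \(\rho\Delta^2\). The leading weight perturbation \(u^2K'(u)\) is odd (F4). Its first-order effect on the intercept therefore pairs with odd moments of \(f_T\), and these are \(O(h_0)\) by local Lipschitzness of \(f_T\) (F5). Together this gives a bias-type term \(O(\kappa_\psi h_0^2)\) uniformly in \(t\). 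The stochastic part is a weighted sum of the \(e_{ij}\) with weight differences of relative size \(\kappa_\psi h_0\). Its variance is therefore \(\kappa_\psi^2h_0^2/n_{\mathrm{loc},1}\) by (F2)–(F3), giving \(O_P(\kappa_\psi h_0n_{\mathrm{loc},1}^{-1/2})\) pointwise. Integrate the squared bound over compact \(E\) by Fubini and Markov to get the \(L^2\) statement.

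\emph{Consequence for the covariance responses.} Write \(\delta=\hat{\mathfrak m}^{(\psi)}\circ\psi-\hat{\mathfrak m}\). The raw products differ by
\[
-\delta(T_{ij})(Y_{ik}-\hat{\mathfrak m}(T_{ik}))-\delta(T_{ik})(Y_{ij}-\hat{\mathfrak m}(T_{ij}))+\delta(T_{ij})\delta(T_{ik}).
\]
The covariance smoother is a local weighted average with bounded equivalent-kernel weights on the Gram event. By Cauchy–Schwarz and the fourth-moment conditions, these terms contribute at the order of \(\|\delta\|\) in the local quadratic averages, which is the claimed order.
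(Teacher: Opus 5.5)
Your proposal is correct and follows essentially the same route as the paper. The affine case is the one-dimensional design-matrix argument \(X'=XD\) with \(e_1^\top D^{-1}=e_1^\top\); your observation that the transported weights are \(|a|^{-1}\) times the original ones, a common factor that cancels, is slightly more precise than the paper's \(W'=W\). The non-affine case likewise uses the first-order perturbation of the normal equations on the Gram event, oddness of \(u^2K'(u)\) with local Lipschitz regularity of \(f_T\) and \(\mathfrak m\) giving the \(O(\kappa_\psi h_0^2)\) remainder, a centred term of order \(\kappa_\psi h_0 n_{\mathrm{loc},1}^{-1/2}\), and the expansion of the raw products with fourth moments for the covariance responses.
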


\begin{proof}
	The affine identity follows from the one-dimensional version of the matrix
	argument in Theorem~\ref{thm:transport}(ii).  In the non-affine case, the
	one-dimensional normal matrix and response vector admit the same
	first-order perturbation expansion as in the proof of
	Theorem~\ref{thm:transport}(iii). Symmetry of \(K\) cancels the population
	first-order odd term; local Lipschitz regularity of \(f_T\) and
	\(\mathfrak m\) leaves an \(O(\kappa_\psi h_0^2)\) deterministic remainder,
	while the centered local empirical moment is
	\(O_P(\kappa_\psi h_0 n_{\mathrm{loc},1}^{-1/2})\).  The statement for the
	raw covariance responses follows by expanding the product of the two
	centered observations and using the fourth-moment assumptions.  We do not
	claim a maximum-over-all-pairs bound, which would require stronger tail
	assumptions.
\end{proof}

In the general case the mean discrepancy of Lemma~\ref{lem:mean} propagates
additively into the covariance discrepancy, with the same two-term structure and
with \(h_0\) in place of \(h\). Since the two bandwidths are of the same order in
practice, we absorb it into the constants of
Theorem~\ref{thm:transport}(iii) and do not track it separately.

\subsection{The transport identity and its exactness class}

\begin{theorem}[Twin sparse covariance identity]
	\label{thm:transport}
	Let \(\psi\) satisfy \eqref{eq:jacobian} and let the bandwidth follow
	\eqref{eq:bandwidth-rule}. Then:
	\begin{enumerate}[label=\textup{(\roman*)},leftmargin=*]
		\item \emph{(Population, exact.)} For all \((s',t')\in E'\times E'\),
		\[
		C^{(\psi)}(s',t')=C\bigl(\psi^{-1}(s'),\psi^{-1}(t')\bigr).
		\]
		If \(\mathcal C\) and \(\mathcal C^{(\psi)}\) denote the corresponding
		covariance operators on \(L^2(E,\mu)\) and
		\(L^2(E',\mu^{(\psi)})\), respectively, then
		\[
		\mathcal C^{(\psi)}=U_\psi\,\mathcal C\,U_\psi^{*}.
		\]
		\item \emph{(Estimator, affine case, exact.)} If \(\psi\) is affine, then
		for every \(n\), every realisation of the data and every \(h>0\),
		\[
		\hat C^{(\psi)}_{h'}=U_\psi\,\hat C_{n,h}\,U_\psi^{*}.
		\]
		\item \emph{(Estimator, non-affine case.)} Assume in addition
		\(\psi\in C^{2,1}\), and let \(\mathcal G_{n,h}\) be the event on which
		the normalized two-dimensional local-linear Gram matrices, before and
		after transport, have their smallest eigenvalues bounded below by a
		fixed constant. Then, uniformly on compact subsets of the interior,
		\[
		\bigl|
		\hat C^{(\psi)}_{h'}(\psi(s),\psi(t))-\hat C_{n,h}(s,t)
		\bigr|
		=
		O_P\!\left(
		\kappa_\psi\left[
		h^2+h\,\nloc^{-1/2}
		+h_0^2+h_0 n_{\mathrm{loc},1}^{-1/2}
		\right]\right)
		\quad\text{on }\mathcal G_{n,h},
		\]
		where
		\[
		\kappa_\psi
		:=
		\left\|\frac{\psi''}{\psi'}\right\|_\infty
		\]
		and \(\nloc\) is the effective number of off-diagonal pairs in a
		two-dimensional smoothing window. Consequently,
		\[
		\bigl\|
		\hat C^{(\psi)}_{h'}-U_\psi\hat C_{n,h}U_\psi^*
		\bigr\|_{L^2(E'\times E')}
		=
		O_P\!\left(
		\kappa_\psi\left[h^2+h\,\nloc^{-1/2}\right]\right).
		\]
		The same statement holds unconditionally whenever
		\(\PP(\mathcal G_{n,h}^c)\to0\).
	\end{enumerate}
\end{theorem}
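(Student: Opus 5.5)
Part (i) is a change of variables. By definition \(X^{(\psi)}(\psi(t))=X(t)\), so \(C^{(\psi)}(\psi(s),\psi(t))=\Cov(X(s),X(t))=C(s,t)\), which is the pointwise identity. For the operator identity, \(U_\psi f=f\circ\psi^{-1}\) is unitary from \(L^2(E,\mu)\) to \(L^2(E',\psi_\#\mu)\) by the push-forward formula \(\int_{E'}g\,d\psi_\#\mu=\int_E g\circ\psi\,d\mu\). Then
\[
(U_\psi\mathcal C U_\psi^*g)(s')=\int_E C(\psi^{-1}(s'),t)\,g(\psi(t))\,\mu(dt)=\int_{E'}C^{(\psi)}(s',t')g(t')\,\mu^{(\psi)}(dt'),
\]
which is \(\mathcal C^{(\psi)}g\). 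The spectral statements follow because unitary conjugation preserves eigenvalues.

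For (ii), write \(\psi(t)=at+b\), so \(h'\equiv|a|h\). I would first handle the mean step with Lemma~\ref{lem:mean}: the responses \(Z^{(\psi)}_{ijk}\) coincide with \(Z_{ijk}\). The key observation is then the following. Let \(s'=\psi(s)\) and \(t'=\psi(t)\). The transported design rows are \((1,a(S_r-s),a(T_r-t))\), i.e.\ \(X'_{s',t'}=X_{s,t}D\) with \(D=\operatorname{diag}(1,a,a)\). The weights satisfy \(w'_r=K_{|a|h}(a(S_r-s))K_{|a|h}(a(T_r-t))=a^{-2}w_r\), using symmetry of \(K\) when \(a<0\). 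Substituting into \eqref{eq:ll-matrix}, the factor \(a^{-2}\) cancels and \(D\) cancels as well, since \(e_1^\top D^{-1}=e_1^\top\). Hence \(\hat C^{(\psi)}(\psi(s),\psi(t))=\hat C(s,t)\) identically, and the nonsingularity events coincide. Kernel conjugation then gives the operator statement as in (i).

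For (iii), fix \((s,t)\) in a compact interior set and Taylor-expand \(\psi(S_r)-\psi(s)=\psi'(s)(S_r-s)\bigl[1+\tfrac12\frac{\psi''(s)}{\psi'(s)}(S_r-s)+O(\kappa_\psi' h^2)\bigr]\), where the remainder uses the Lipschitz property of \(\psi''\). Do the same in \(t\). After dividing by \(h'=h|\psi'(s)|\), the transported kernel argument becomes \(u(1+\tfrac12\kappa u h)+O(h^2)\), with \(u=(S_r-s)/h\). The Lipschitz property of \(K\) and its derivative then gives
\[
w'_r=c\,w_r\bigl[1+\delta_r\bigr]+O(\kappa_\psi h^2),\qquad \delta_r=\tfrac{h}{2}\kappa(u^2K'(u)/K(u)+\dots).
\]
The term \(\delta_r\) is odd in \(u\) by (F4). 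The design columns are perturbed similarly, at relative order \(\kappa h\). Writing the estimator as \(e_1^\top A^{-1}b\) with normalized Gram matrix \(A\) and response vector \(b\), I would use \(A'^{-1}-A^{-1}=-A'^{-1}(A'-A)A^{-1}\). On \(\mathcal G_{n,h}\) both inverses are bounded, so the defect is controlled by \(\|A'-A\|\) and \(\|b'-b\|\) after subtracting the local-linear reproduction of the value \(C(s,t)\).

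The main obstacle is showing that the first-order \(O(\kappa h)\) terms cancel. Each such term is a local empirical moment of an odd function of \(u\), multiplied by \(Z_r-C(s,t)-\text{linear part}\). Its expectation involves \(\int(\text{odd})\,f\,(C-\ell)\). Here symmetry of \(K\) kills the leading term, and the local Lipschitz properties in (F5) leave \(O(\kappa h^2)\). This is the step where local-linear fitting matters: the linear columns absorb the odd linear residual. The centered fluctuation is \(\kappa h\) times a normalized empirical average over \(\nloc\) pairs, hence \(O_P(\kappa h\nloc^{-1/2})\) by the fourth-moment assumptions (F2)--(F3). Lemma~\ref{lem:mean} adds the \(h_0\) terms additively.

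Finally, the \(L^2\) bound follows by integrating the pointwise bound, since uniform control on compacts plus a boundary strip of vanishing measure is enough. The unconditional version follows from \(\PP(\mathcal G^c_{n,h})\to0\).
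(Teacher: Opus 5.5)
Your proposal follows the paper's proof essentially step for step: the change of variables for (i); the factorization \(X'=XD\) with \(e_1^\top D^{-1}=e_1^\top\), and the mean step delegated to Lemma~\ref{lem:mean}, for (ii) (your \(w'_r=a^{-2}w_r\) is more accurate than the paper's \(W'=W\), and the scalar cancels anyway); and for (iii) the Taylor expansion of \(\psi\), the perturbation of the normalized Gram matrix and response vector, the resolvent identity on \(\mathcal G_{n,h}\), an \(O(\kappa_\psi h^2)\) deterministic plus \(O_P(\kappa_\psi h\nloc^{-1/2})\) centered contribution, and Lemma~\ref{lem:mean} for the \(h_0\) terms.

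The main place you differ is centering the responses at \(C(s,t)\) before comparing the two fits, and this is in fact the better bookkeeping. After centering, \(A^{-1}b\) is of order \(h+\nloc^{-1/2}\), so the Gram perturbation is allowed to stay genuinely first order. It really is first order: its \((1,2)\) and \((1,3)\) entries have first variations proportional to \(\kappa_K=-2\mu_2\neq0\) (cf.\ Lemma~\ref{lem:first-variation}), contrary to the paper's claim \(\|\Delta\Gamma\|_{\mathrm{op}}=O_P(\kappa_\psi[h^2+h\nloc^{-1/2}])\). With \(C\) Lipschitz, the deterministic \(O(\kappa_\psi h^2)\) bound then follows without any oddness argument.
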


\begin{remark}[Which term dominates]
	\label{rem:two-regimes}
	If the counts have bounded moments with a non-vanishing second factorial
	moment, then in the classical sparse regime
	\(\nloc\asymp n\,\EE[N(N-1)]h^2\) up to design constants. Thus
	\(h\nloc^{-1/2}\) is of order
	\(\{n\,\EE[N(N-1)]\}^{-1/2}\). The curvature contribution \(h^2\) dominates
	when \(h^2\sqrt{n\,\EE[N(N-1)]}\to\infty\); otherwise the stochastic
	design term dominates. Both disappear when \(\psi''\equiv0\), consistently
	with exact affine equivariance.
\end{remark}

\begin{proof}
	\emph{(i)} By definition of the transported process,
	\(X^{(\psi)}(\psi(s))=X(s)\), hence
	\(C^{(\psi)}(\psi(s),\psi(t))=\Cov(X(s),X(t))=C(s,t)\); the change of
	variables \(s'=\psi(s)\), \(t'=\psi(t)\) gives the stated identity, and the
	operator form follows from the integral representation of the covariance
	operator. Nothing here involves the data or the bandwidth.

	\emph{(ii)} Let \(\psi(t)=ct+d\) with \(c\neq0\), so that
	\(h'\equiv h|c|\). At \(s'=\psi(s)\), \(t'=\psi(t)\), evenness of \(K\)
	implies that the diagonal weight matrix in \eqref{eq:ll-matrix} is unchanged:
	\(W'=W\). The transformed design matrix is
	\[
	X'=XA_c,\qquad A_c=\operatorname{diag}(1,c,c).
	\]
	Hence
	\[
	(X'^\top W'X')^{-1}X'^\top W'
	=
	A_c^{-1}(X^\top WX)^{-1}X^\top W.
	\]
	Since \(e_1^\top A_c^{-1}=e_1^\top\), the fitted intercept is unchanged.
	Lemma~\ref{lem:mean} gives equality of the covariance responses. Therefore
	\[
	\widehat C_{h'}^{(\psi)}(\psi(s),\psi(t))
	=
	\widehat C_h(s,t)
	\]
	pointwise wherever the fits are defined. The identity is exact for every
	\(n\), every admissible realization and every \(h>0\).

	\emph{(iii)} We work with the genuine two-dimensional normal equations.
	Write the normalized coordinates
	\[
	u_r=\frac{S_r-s}{h},\qquad v_r=\frac{T_r-t}{h},
	\qquad q_r=(1,u_r,v_r)^\top,
	\]
	and let
	\[
	\Gamma=\frac1{\nloc}\sum_r K(u_r)K(v_r)q_rq_r^\top,
	\qquad
	b=\frac1{\nloc}\sum_r K(u_r)K(v_r)q_rZ_r.
	\]
	The fitted intercept is \(e_1^\top\Gamma^{-1}b\).  The transformed fit can
	be written in the same normalized coordinates as
	\(e_1^\top(\Gamma+\Delta\Gamma)^{-1}(b+\Delta b)\).

	\paragraph{Step 1: geometric perturbation.}
	Uniformly for \(|u|,|v|\le1\), the \(C^{2,1}\) assumption gives
	\[
	\frac{\psi(s+hu)-\psi(s)}{h|\psi'(s)|}
	=
	\operatorname{sgn}\{\psi'(s)\}
	\left[u+a_s h u^2+O(\kappa_{\psi,1}h^2)\right],
	\qquad
	a_s=\frac{\psi''(s)}{2\psi'(s)},
	\]
	and analogously at \(t\). Here \(\kappa_{\psi,1}\) depends on the Lipschitz
	modulus of \(\psi''\), \(c_*^{-1}\), and \(\|\psi''\|_\infty\).
	Because \(K'\) is Lipschitz,
	\[
	K(u+a_shu^2+O(h^2))
	=
	K(u)+a_shu^2K'(u)+O(\kappa_\psi h^2),
	\]
	uniformly away from the harmless support endpoints.  The transformed
	regressor vector has the corresponding expansion
	\[
	q_r^{(\psi)}
	=
	Dq_r+h\,r_r+O(\kappa_\psi h^2),
	\]
	where \(D=\operatorname{diag}(1,\operatorname{sgn}\psi'(s),
	\operatorname{sgn}\psi'(t))\), and the entries of \(r_r\) are bounded
	quadratic polynomials in \(u_r,v_r\), multiplied by \(a_s,a_t\).
	The diagonal sign matrix \(D\) leaves the fitted intercept invariant, so it
	may be removed from the perturbation calculation.

	\paragraph{Step 2: perturbation of the normal matrix.}
	Expanding the product kernel and \(q_r^{(\psi)}q_r^{(\psi)\top}\) gives
	\[
	\Delta\Gamma
	=
	h\{a_s\Gamma_s^{(1)}+a_t\Gamma_t^{(1)}\}
	+O_P(\kappa_\psi h^2).
	\]
	The entries of the first-order matrices are empirical averages of functions
	whose population leading terms are odd in at least one local coordinate.
	For an interior design with Lipschitz density,
	\[
	\Gamma_s^{(1)},\Gamma_t^{(1)}
	=
	O(h)+O_P(\nloc^{-1/2}).
	\]
	Hence
	\begin{equation}
	\label{eq:DeltaGamma}
	\|\Delta\Gamma\|_{\mathrm{op}}
	=
	O_P\!\left(
		\kappa_\psi[h^2+h\nloc^{-1/2}]
		\right).
	\end{equation}
	This is the matrix form of the local-linear cancellation; no factorization
	into one-dimensional local-linear weights is used.

	\paragraph{Step 3: perturbation of the response vector.}
	Decompose \(Z_r=C(S_r,T_r)+\xi_r\), with
	\(\EE(\xi_r\mid S_r,T_r)=0\).  The local Lipschitz condition on \(C\)
	implies that the deterministic first-order odd terms in \(\Delta b\) are
	\(O(\kappa_\psi h^2)\).  The centered terms have size
	\(O_P(\kappa_\psi h\nloc^{-1/2})\), under the fourth-moment assumptions and
	the usual subject-cluster variance calculation for the off-diagonal pairs.
	Therefore
	\begin{equation}
	\label{eq:Deltab}
	\|\Delta b\|
	=
	O_P\!\left(
		\kappa_\psi[h^2+h\nloc^{-1/2}]
		\right).
	\end{equation}
	The discrepancy generated by estimating the mean is controlled by
	Lemma~\ref{lem:mean} and contributes
	\(O_P\{\kappa_\psi(h_0^2+h_0n_{\mathrm{loc},1}^{-1/2})\}\) to the local
	response-vector perturbation.

	\paragraph{Step 4: resolvent identity.}
	On \(\mathcal G_{n,h}\), both Gram matrices have inverses bounded in operator
	norm.  The exact identity
	\[
	(\Gamma+\Delta\Gamma)^{-1}-\Gamma^{-1}
	=
	-\Gamma^{-1}\Delta\Gamma(\Gamma+\Delta\Gamma)^{-1}
	\]
	together with \eqref{eq:DeltaGamma}--\eqref{eq:Deltab} yields
	\[
	\left|
	e_1^\top(\Gamma+\Delta\Gamma)^{-1}(b+\Delta b)
	-e_1^\top\Gamma^{-1}b
	\right|
	=
	O_P\!\left(
		\kappa_\psi\left[
		h^2+h\nloc^{-1/2}
		+h_0^2+h_0n_{\mathrm{loc},1}^{-1/2}
		\right]\right).
	\]
	Uniformity on compact interior subsets follows from the same bounds for the
	local empirical moments. Integrating after the change of variables
	\((s,t)\mapsto(\psi(s),\psi(t))\), whose Jacobian is uniformly bounded above
	and below, gives the stated \(L^2\) result.
\end{proof}

\begin{lemma}[Deterministic first variation under curvature]
	\label{lem:first-variation}
	Put
	\[
	g_{\psi,s,h}(u)=
	\frac{\psi(s+hu)-\psi(s)}{h\psi'(s)}.
	\]
	If \(\psi\in C^2\), then
	\[
	g_{\psi,s,h}(u)
	=
	u+\varepsilon u^2+o(h),
	\qquad
	\varepsilon=\frac{\psi''(s)}{2\psi'(s)}h.
	\]
	Assume \(K\) is even, bounded, continuously differentiable on the interior
	of its support, and non-constant. For a symmetric continuum design, the
	normalized local-linear intercept weight satisfies
	\[
	\omega_{g_\varepsilon}(u)
	=
	\frac{K(u)}{\mu_0}+\varepsilon\Xi_K(u)+o(\varepsilon),
	\]
	where
	\[
	\Xi_K(u)=\frac1{\mu_0}
	\left[u^2K'(u)-\frac{\kappa_K}{\mu_2}uK(u)\right],
	\qquad
	\kappa_K=\int\{u^3K'(u)+u^2K(u)\}\,du.
	\]
	The function \(\Xi_K\) is odd and is not identically zero.
\end{lemma}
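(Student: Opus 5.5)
The plan is to treat the two claims separately: first a one-line Taylor expansion for \(g_{\psi,s,h}\), then an explicit first-order expansion of the continuum local-linear intercept weight, and finally a parity argument and a nonvanishing argument for \(\Xi_K\).

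\emph{Step 1: the expansion of \(g_{\psi,s,h}\).} Since \(\psi\in C^2\), Taylor's theorem with Peano remainder gives \(\psi(s+hu)-\psi(s)=hu\psi'(s)+\tfrac12h^2u^2\psi''(s)+o(h^2)\), uniformly for \(|u|\le1\). Dividing by \(h\psi'(s)\), which is legitimate by \eqref{eq:jacobian}, yields \(g_{\psi,s,h}(u)=u+\varepsilon u^2+o(h)\) with \(\varepsilon=h\psi''(s)/\{2\psi'(s)\}\). In particular \(o(h)\) is \(o(\varepsilon)\) whenever \(\psi''(s)\neq0\). We then write \(g_\varepsilon(u)=u+\varepsilon u^2\) and work with this map, since the remainder only contributes \(o(\varepsilon)\) to everything below.

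\emph{Step 2: the weight for a symmetric continuum design.} Take Lebesgue design on \([-1,1]\) in the original normalized coordinate, and place the transported point \(u\) at \(g_\varepsilon(u)\). The continuum local-linear intercept weight is
\[
\omega_{g_\varepsilon}(u)=\frac{K(g_\varepsilon(u))\{\mu_2'-\mu_1'g_\varepsilon(u)\}}{\mu_0'\mu_2'-\mu_1'^2},
\qquad
\mu_j'=\int K(g_\varepsilon(u))g_\varepsilon(u)^j\,du .
\]
Using \(K(g_\varepsilon(u))=K(u)+\varepsilon u^2K'(u)+o(\varepsilon)\), valid in the interior by continuous differentiability, I will expand the three moments. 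Because \(K'\) is odd, the integrands \(u^2K'\), \(u^4K'\) and \(u^3K\) are odd. This gives
\[
\mu_0'=\mu_0+o(\varepsilon),\qquad
\mu_2'=\mu_2+o(\varepsilon),\qquad
\mu_1'=\varepsilon\int\{u^3K'+u^2K\}+o(\varepsilon)=\varepsilon\kappa_K+o(\varepsilon).
\]
The denominator is therefore \(\mu_0\mu_2+o(\varepsilon)\), and the numerator is \((K+\varepsilon u^2K')(\mu_2-\varepsilon\kappa_K u)+o(\varepsilon)\). Collecting the terms of order \(\varepsilon\) gives exactly \(K/\mu_0+\varepsilon\Xi_K+o(\varepsilon)\).

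The oddness of \(\Xi_K\) is immediate, since both \(u^2K'(u)\) and \(uK(u)\) are odd.

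\emph{Step 3: nonvanishing of \(\Xi_K\).} I expect this to be the main obstacle, because it needs a genuine argument rather than bookkeeping. I will argue by contradiction.

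\begin{itemize}
\item If \(\Xi_K\equiv0\), then \(uK'(u)=cK(u)\) on \((0,1)\) with \(c=\kappa_K/\mu_2\).
\item Solving this ODE gives \(K(u)=A|u|^c\) on the support, using evenness of \(K\).
\item Boundedness near \(0\) forces \(c\ge0\), and \(K\) non-constant forces \(c\neq0\), so \(c>0\) and \(A>0\).
\item For this power kernel, direct computation gives \(\mu_2=2A/(c+3)\) and \(\int u^2K=\mu_2\). It also gives \(\int u^3K'=2Ac/(c+3)\), where endpoint values at \(\pm1\) are kept if \(K\) jumps there.
\item Hence \(\kappa_K=2A(c+1)/(c+3)\), so \(\kappa_K/\mu_2=c+1\neq c\), which is a contradiction.
\end{itemize}

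The delicate points are two. First, the support endpoints, where \(K\) may be discontinuous: I will keep the computation inside the open interval and use that the identity \(uK'=cK\) holds only there. Second, the case \(\mu_2=0\), which cannot occur because \(K\ge0\) is nonzero; if signed kernels are allowed, I would add nondegeneracy \(\mu_0\mu_2\neq0\) as part of "symmetric continuum design".
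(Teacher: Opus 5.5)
Your Steps 1--2 reproduce the paper's argument in explicit form. The paper likewise starts from $S_0=\mu_0$, $S_1=0$, $S_2=\mu_2$, uses $\partial_\varepsilon K(g_\varepsilon(u))|_{\varepsilon=0}=u^2K'(u)$, kills the first variations of $S_0$ and $S_2$ by oddness, obtains $\dot S_1(0)=\kappa_K$, and differentiates the numerator.

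The genuine difference is in the non-vanishing of $\Xi_K$. The paper reduces $\Xi_K\equiv0$ to the Euler equation $uK'=cK$ and declares it incompatible with a regular, normalized, non-constant kernel. That is complete only if ``regular'' includes $K(\pm1)=0$: the kernel $A|u|^c$ on $[-1,1]$ with $c>1$ satisfies every hypothesis of the lemma, solves the equation, and can be normalized. You instead use the fact that $c$ is not free but equals $\kappa_K/\mu_2$. Your computation $\kappa_K/\mu_2=c+1$ is correct, so your argument also covers kernels with endpoint jumps. It can be shortened to one line: multiplying $uK'=cK$ by $u^2$ and integrating gives $\int u^3K'=c\mu_2$, hence $\kappa_K=(c+1)\mu_2$. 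This needs no ODE solving and no boundedness or non-constancy, only $\mu_2\neq0$.

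One caution: the jump case that Step 3 deliberately accommodates is exactly where Step 2 breaks. This is a defect of the statement itself, and the paper's proof shares it. If $K(1^-)\neq0$, integrating the interior Taylor expansion misses boundary layers of width $O(|\varepsilon|)$ on which $K(g_\varepsilon(u))-K(u)=O(1)$.

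\begin{itemize}
\item With design density one on a neighbourhood of the window, the substitution $x=g_\varepsilon(u)$ gives $\mu_1'=\int K(x)x\{1-2\varepsilon x+O(\varepsilon^2)\}\,dx=-2\varepsilon\mu_2+O(\varepsilon^2)$. With the interior derivative, however, $\varepsilon\kappa_K=\varepsilon\{2K(1^-)-2\mu_2\}$.
\item With your design on $[-1,1]$, even $\mu_0'=\mu_0-|\varepsilon|K(1^-)+o(\varepsilon)$.
\end{itemize}

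So Step 2 needs $K(\pm1)=0$, which the Lipschitz requirement in (F4) provides. Under it:
\begin{itemize}
\item the bound $|K(g_\varepsilon(u))-K(u)|\le L|\varepsilon|u^2$ justifies your moment expansions by dominated convergence;
\item integration by parts gives $\kappa_K=-2\mu_2$;
\item hence $\Xi_K=(u^2K)'/\mu_0$, whose non-vanishing is immediate.
\end{itemize}
State this hypothesis explicitly in Step 2. Your power-kernel computation then remains a correct fact about the formula for $\Xi_K$, but the lemma itself does not need it.
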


\begin{proof}
	At \(\varepsilon=0\), symmetry gives \(S_0=\mu_0\), \(S_1=0\),
	and \(S_2=\mu_2\). Differentiating \(K(g_\varepsilon(u))\) gives
	\(u^2K'(u)\). The first variations of \(S_0\) and \(S_2\) vanish by
	oddness, while
	\[
	\dot S_1(0)=\int\{u^3K'(u)+u^2K(u)\}\,du=\kappa_K.
	\]
	The denominator has zero first variation, and differentiation of the
	numerator gives the displayed formula. If \(\Xi_K\equiv0\) on an interval
	where \(K\neq0\), then \(uK'(u)=cK(u)\) there, which is incompatible with a
	regular non-constant kernel under the usual kernel normalization.
\end{proof}

\begin{corollary}[The universal exactness class is exactly the affine maps]
	\label{cor:affine}
	Under the preceding kernel conditions, a \(C^2\)-diffeomorphism \(\psi\)
	commutes with the local-linear smoothing operator for every interior query
	point, every sufficiently small bandwidth, every admissible finite design
	and every response vector if and only if \(\psi\) is affine.
\end{corollary}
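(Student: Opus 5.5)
The "if" direction is Theorem~\ref{thm:transport}(ii): for affine \(\psi(t)=ct+d\) the transported bandwidth is the constant \(h|c|\). Evenness of \(K\) leaves the weight matrix unchanged, and the design matrix changes by right multiplication with \(A_c=\operatorname{diag}(1,c,c)\), which does not affect the intercept. This holds for every design, every response vector and every bandwidth, so only the converse needs work. For the converse I will argue by contraposition: if \(\psi''(s_0)\neq0\) at some interior point, I will exhibit one admissible finite design and one response vector for which the two fitted intercepts differ at \((s_0,t)\) for all small \(h\).

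\textbf{Reduction to a one-dimensional weight.} Fix \(t\) and the query point \((s_0,t)\). Take responses that are nonzero at a single design point \(r\) and zero elsewhere. Then \(\widehat C_h(s_0,t)=e_1^\top\Gamma^{-1}q_r w_r\) is the local-linear intercept weight attached to that point. Commutation for every response vector is therefore equivalent to equality of the two intercept weight vectors, \(\omega=\omega^{(\psi)}\), at every design point. Next I will choose designs whose \(T\)-coordinates all equal \(t\) (or are placed symmetrically about \(t\)), so that the \(v\)-direction decouples. The problem then reduces to the one-dimensional local-linear intercept weight in the \(s\)-direction. After transport, in the normalized coordinate \(u\), the original weight is evaluated along the warped abscissae \(g_{\psi,s_0,h}(u)=u+\varepsilon u^2+o(h)\) with \(\varepsilon=h\psi''(s_0)/(2\psi'(s_0))\neq0\). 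The sign of \(\psi'\) is absorbed by the \(D\) matrix of Step~1.

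\textbf{Detecting the first variation.} Lemma~\ref{lem:first-variation} gives, for a symmetric continuum design, \(\omega_{g_\varepsilon}-\omega_{\mathrm{id}}=\varepsilon\,\Xi_K+o(\varepsilon)\), where \(\Xi_K\) is odd and not identically zero. I will transfer this to finite designs by taking a symmetric grid of \(m\) points \(u_k\in(-1,1)\). The finite local-linear weights are smooth rational functions of the empirical moments \(S_0,S_1,S_2\), which are Riemann sums, so as \(m\to\infty\) they converge with their first variations to the continuum ones. Hence, for \(m\) large and a grid point \(u^\ast\) with \(\Xi_K(u^\ast)\neq0\), the difference of weights at \(u^\ast\) is \(\varepsilon\Xi_K(u^\ast)+o(\varepsilon)+o_m(1)\varepsilon\), which is nonzero for small \(h\). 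Placing the unit response at that design point then makes \(\widehat C^{(\psi)}_{h'}(\psi(s_0),\psi(t))\neq\widehat C_h(s_0,t)\), contradicting commutation. So \(\psi''\equiv0\) on the interior, and by continuity \(\psi\) is affine.

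\textbf{Main obstacle.} The delicate step is making the finite-design perturbation rigorous. The \(o(h)\) remainder in \(g\) must be uniform on \([-1,1]\); this holds since \(\psi\in C^2\) on a compact set. The limits \(m\to\infty\) and \(h\to0\) must also be interchanged correctly. I will handle this by fixing \(m\) first, so that the grid Gram matrix is nonsingular and its weights are \(C^1\) in the abscissae. I will then differentiate the finite weight directly in \(\varepsilon\) and show that its derivative at \(\varepsilon=0\) is the Riemann-sum analogue of \(\Xi_K\). That analogue is nonzero at some grid point for \(m\) large, because \(\Xi_K\) is continuous and nonzero somewhere on the interior of the support. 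A second subtlety is that the transported mean estimate must not spoil the argument. I will avoid it by working with the smoother as an operator on arbitrary response vectors, exactly as the corollary is stated, so Lemma~\ref{lem:mean} is not needed.
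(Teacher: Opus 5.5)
Your route is the paper's. Sufficiency comes from Theorem~\ref{thm:transport}(ii). Necessity comes from the first variation $\Xi_K$ of Lemma~\ref{lem:first-variation}, a finite-grid Riemann approximation, and a response that detects $\Xi_K$. You use a unit response at a node with nonzero discrete first variation, whereas the paper uses a $z$ with $\int z\,\Xi_K\neq0$; these are equivalent. Your ordering (fix $m$, differentiate the finite weight in the abscissae, then let $h\to0$) is a correct way to make the paper's one-line Riemann step precise. The paper passes from the one-dimensional lemma to the two-dimensional smoother without comment. You rightly see that a reduction is needed, but the one you propose does not work as written.

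\textbf{The decoupling step.} If all second coordinates equal $t$, the third column of $X_{s,t}$ vanishes and $X^\top WX$ is singular, so that design is not admissible.

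If instead the second coordinates are symmetric about $t$, the original Gram matrix is block diagonal, but the transported one is not. Up to the sign of $\psi'(t)$, the transported normalized second coordinates are $v_l+\varepsilon_t v_l^2+o(h)$ with $\varepsilon_t=h\psi''(t)/\{2\psi'(t)\}$. These are asymmetric unless $\psi''(t)=0$, and in general you cannot choose such a $t$ (take $\psi=\exp$).

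To see the effect, take a tensor design $\{u_k\}\times\{v_l\}$ with product weights. The two-dimensional intercept weight at node $(k,l)$ is $\omega^u_k\beta_l+\alpha_k(\omega^v_l-\beta_l)$. Here $\omega^u,\omega^v$ are the one-dimensional local-linear weights and $\alpha,\beta$ the normalized kernel weights. After transport, both the second term and the change in $\beta_l$ are of order $h$, the same order as the effect $\varepsilon\,\Xi_K$ you want to isolate. So a single-node response does not reduce to the $s$-direction weight.

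\textbf{The repair.} On such a tensor design, take any $v$-grid with two distinct values and a response constant along the $v$-index, e.g.\ $Z_{kl}=\mathbf{1}\{k=k^\ast\}$.
\begin{itemize}
\item Under product weights, the weighted-centred $v$ is orthogonal to every function of $u$.
\item Hence the weighted least-squares projection of such a response onto $\operatorname{span}\{1,u,v\}$ equals its projection onto $\operatorname{span}\{1,u\}$.
\item Transport acts coordinatewise, so it preserves the tensor structure and the product weights.
\item Both fitted intercepts, before and after warping, are therefore exactly the one-dimensional local-linear weights at $u_{k^\ast}$.
\end{itemize}
Your first-variation argument then applies verbatim. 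Equivalently, sum your single-node identities over $l$: since $\sum_l\omega^v_l=\sum_l\beta_l=1$, the $v$-contributions cancel and you are left with $\omega^u_k=\tilde\omega^u_k$.
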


\begin{proof}
	Sufficiency is Theorem~\ref{thm:transport}(ii). If
	\(\psi''(s_0)\neq0\), Lemma~\ref{lem:first-variation} gives a nonzero first
	variation \(\Xi_K\). Choose a bounded response function \(z\) such that
	\[
	\int z(u)\Xi_K(u)\,du\neq0.
	\]
	The continuum fitted intercepts then differ at first order in
	\(\varepsilon\). A Riemann approximation supplies a finite admissible
	design and response vector for which the fitted intercepts differ,
	contradicting universal commutation. Thus \(\psi''\equiv0\), hence
	\(\psi\) is affine.
\end{proof}

\begin{remark}[Quantifiers in the necessity statement]
	The result is universal: it does not claim that every non-affine
	transformation changes every particular sample. Local-linear regression
	reproduces affine responses and accidental cancellations may occur.
\end{remark}

\begin{remark}[Local linear versus Nadaraya--Watson]
	\label{rem:nw}
	The oddness argument uses the local-linear correction and nothing else. For a
	Nadaraya--Watson smoother the perturbation of \(S_0\) is not compensated ---
	item (b) of Step~3 has no counterpart --- and the same expansion gives a
	discrepancy of order \(h\|\psi''\|_\infty/c_*\), one full order worse. This
	is a concrete reason to prefer local-linear smoothing in the twin framework,
	over and above the usual boundary-bias argument, and it is directly
	measurable (T3 below).
\end{remark}

\begin{remark}[Curvature, not gradient]
	\label{rem:curvature}
	Neither term of Theorem~\ref{thm:transport}(iii) involves
	\(\|\psi'\|_\infty\), and the Jacobian bounds \eqref{eq:jacobian} enter only
	through \(c_*\) in the constants. A steep but affine reparametrisation
	disturbs nothing at all once the bandwidth follows
	\eqref{eq:bandwidth-rule}. A mild but curved one does.
\end{remark}

\subsection{The sampling design under transport}
\label{sec:design}

The design enters the sparse problem in a way it never does in the dense one:
\(f_T\) controls how much information is locally available to the smoother.
Transport moves it.

\begin{definition}[Transport-compatible design]
	\label{def:compatible}
	The design is \emph{transport-compatible} with \(\psi\) if \(f_T\) is bounded
	between two positive constants on \(E\) and the pushed-forward density
	\(f_T^{(\psi)}(s')=f_T(\psi^{-1}(s'))\,|(\psi^{-1})'(s')|\)
	is bounded between two positive constants on \(E'\).
\end{definition}

\begin{proposition}[Preservation of design regularity]
	\label{prop:design}
	Under \eqref{eq:jacobian}, if \(a\le f_T\le b\) on \(E\) then
	\(a/c^*\le f_T^{(\psi)}\le b/c_*\) on \(E'\). In particular every design
	satisfying \textup{(F1)} is transport-compatible with every \(\psi\)
	satisfying \eqref{eq:jacobian}, and the class of transport-compatible designs
	is stable under composition of such diffeomorphisms.
\end{proposition}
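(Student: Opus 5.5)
The plan is to write the push-forward density explicitly and bound each factor separately. Since \(\psi\) is a \(C^2\)-diffeomorphism, the inverse function theorem gives \((\psi^{-1})'(s')=1/\psi'(\psi^{-1}(s'))\). Then \eqref{eq:jacobian} yields
\[
\frac{1}{c^*}\le\bigl|(\psi^{-1})'(s')\bigr|\le\frac1{c_*}
\qquad\text{for every } s'\in E'.
\]
The first factor satisfies \(a\le f_T(\psi^{-1}(s'))\le b\) because \(\psi^{-1}(s')\in E\). Multiplying the two ranges gives \(a/c^*\le f_T^{(\psi)}(s')\le b/c_*\).

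I would also check in one line that \(f_T^{(\psi)}\) really is the density of \(\psi(T)\). This follows from the change-of-variables formula for the push-forward measure on a one-dimensional domain, so the bounds apply to the correct object.

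Transport-compatibility is then immediate from Definition~\ref{def:compatible}. Under (F1), \(f_T\) is bounded between \(a>0\) and \(b<\infty\), and the previous display shows that the pushed-forward density is bounded between the positive constants \(a/c^*\) and \(b/c_*\).

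For stability under composition, take \(\psi_1:E\to E'\) and \(\psi_2:E'\to E''\), each satisfying \eqref{eq:jacobian} with constants \((c_{*,k},c^*_k)\). By the chain rule, \((\psi_2\circ\psi_1)'=\psi_2'(\psi_1)\,\psi_1'\), so its modulus lies in \([c_{*,1}c_{*,2},\,c^*_1c^*_2]\). The second derivative \(\psi_2''(\psi_1)(\psi_1')^2+\psi_2'(\psi_1)\psi_1''\) is bounded. It is also Lipschitz, because each of its factors is bounded and Lipschitz; here \(\psi_2'\circ\psi_1\) is Lipschitz since \(\psi_2''\) is bounded.

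Hence the composition again satisfies \eqref{eq:jacobian}, and the first part applies to it. Alternatively, one can apply the first part twice, using that push-forward is functorial: \((\psi_2\circ\psi_1)_{\#}=\psi_{2\#}\psi_{1\#}\).

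There is no real obstacle here. The only step needing care is the Lipschitz property of the composed second derivative, and that is a routine product-rule estimate.
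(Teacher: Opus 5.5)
Your proposal is correct and follows the paper's own argument: the change-of-variables formula \(f_T^{(\psi)}(s')=f_T(\psi^{-1}(s'))/|\psi'(\psi^{-1}(s'))|\), with \eqref{eq:jacobian} bounding the denominator, and stability under composition from the fact that the Jacobian bounds multiply. Your extra check that the composed second derivative stays bounded and Lipschitz goes slightly beyond the paper, since the density bounds need only the first-derivative bounds, but it is harmless and makes the class of ``such diffeomorphisms'' literally closed under composition.
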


\begin{proof}
	The change-of-variables formula gives
	\(f_T^{(\psi)}(s')=f_T(\psi^{-1}(s'))\,/\,|\psi'(\psi^{-1}(s'))|\), and
	\eqref{eq:jacobian} bounds the denominator. Stability under composition
	follows since the Jacobian bounds compose multiplicatively.
\end{proof}

\begin{proposition}[Transport does not change the local information content]
	\label{prop:info}
	Let \(\psi\) satisfy \eqref{eq:jacobian} and let the bandwidth follow
	\eqref{eq:bandwidth-rule}. Then for every \((s,t)\in E\times E\) the expected
	number of observation pairs falling in the smoothing window at
	\((\psi(s),\psi(t))\) in the transported problem equals, to first order in
	\(h\), the expected number falling in the window at \((s,t)\) in the original
	problem.
\end{proposition}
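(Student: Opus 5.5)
The plan is to compute the expected pair counts in the two windows exactly and then compare them by a change of variables. Fix \((s,t)\) in the interior of \(E\times E\). In the original problem the window is \(W_h(s,t)=[s-h,s+h]\times[t-h,t+h]\), because \(K\) is supported in \([-1,1]\). Observation times within a subject are i.i.d.\ with density \(f_T\) with respect to \(\mu\), so the expected number of ordered off-diagonal pairs \((j,k)\), \(j\neq k\), falling in the window is
\[
\EE\Bigl[\sum_i N_i(N_i-1)\Bigr]\int_{s-h}^{s+h}f_T\,d\mu\int_{t-h}^{t+h}f_T\,d\mu .
\]
This uses the independence of the \(T_{ij}\) from \(N_i\) and from one another, together with (F2) for finiteness. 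The transported problem has the same counts \(N_i\) and times \(\psi(T_{ij})\) with density \(f_T^{(\psi)}\) with respect to \(\mu^{(\psi)}\). Its expected count is the same expression with each one-dimensional factor replaced by
\[
\int_{\psi(s)-h'(\psi(s))}^{\psi(s)+h'(\psi(s))} f_T^{(\psi)}\,d\mu^{(\psi)} .
\]
The rule \eqref{eq:bandwidth-rule} gives \(h'(\psi(s))=h|\psi'(s)|\). Because the factorial-moment prefactor is common to both counts, the whole claim reduces to a one-dimensional comparison of window probabilities.

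\textbf{Key step.} The one-dimensional comparison is handled by a change of variables. Since \(T\in A\) iff \(\psi(T)\in\psi(A)\), the transported probability equals \(\PP\{T\in\psi^{-1}(I')\}\), where \(I'=[\psi(s)-h|\psi'(s)|,\ \psi(s)+h|\psi'(s)|]\). Taylor expansion under \eqref{eq:jacobian}, in the form of Step 1 of the proof of Theorem~\ref{thm:transport} or Lemma~\ref{lem:first-variation}, gives \(\psi^{-1}(I')=[s-h+O(\kappa_\psi h^2),\ s+h+O(\kappa_\psi h^2)]\). The signs of the endpoint shifts depend on \(\operatorname{sgn}\psi'\) and on \(a_s\), but both shifts are bounded by a constant times \((\|\psi''\|_\infty/c_*)h^2\). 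The symmetric difference between \(\psi^{-1}(I')\) and \([s-h,s+h]\) therefore has \(\mu\)-measure \(O(h^2)\), taking \(\mu\) comparable to Lebesgue measure in the interval or circle setting. Its \(f_T\)-probability is at most \(b\,O(h^2)\) by (F1), while each window probability is of order \(h\), bounded below by \(2ah\). The two window probabilities thus agree up to a relative error \(O(\kappa_\psi h)\).

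\textbf{Conclusion and main obstacle.} Multiplying the two factors and the common prefactor, the expected counts agree with relative error \(O(\kappa_\psi h)\). This is the meaning of ``to first order in \(h\)'': both counts are of order \(n\,\EE[N(N-1)]h^2\), and they differ only at the next order. In the affine case the preimage window is exactly \([s-h,s+h]\) and the agreement is exact. I expect the main difficulty to be interpretive rather than technical. One must fix that ``equals to first order'' means relative error \(o(1)\), and indeed \(O(\kappa_\psi h)\), rather than an absolute statement. One must also handle query points near the boundary of \(E\), where windows are truncated. I would restrict to compact interior subsets, as in Theorem~\ref{thm:transport}(iii), so that for small \(h\) both windows lie inside the domain. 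For a circle, \(\psi\) maps arcs to arcs and no boundary issue arises.
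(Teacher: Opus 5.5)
Your argument is correct, but it reaches the conclusion by a different mechanism from the paper's. The paper works with leading-order counts. It writes each expected count as \(n\,\EE[N(N-1)]\) times densities times window widths. It then observes that, in the transported problem, the factor \(1/|\psi'(s)|\) in \(f_T^{(\psi)}(\psi(s))=f_T(s)/|\psi'(s)|\) cancels the factor \(|\psi'(s)|\) in \(h'(\psi(s))\), and likewise at \(t\). You never evaluate a density on \(E'\). Instead you pull the transported window back, \(\PP\{\psi(T)\in I'\}=\PP\{T\in\psi^{-1}(I')\}\), and show that \(\psi^{-1}(I')\) differs from \([s-h,s+h]\) only by endpoint shifts of order \(\kappa_\psi h^2\).

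The paper's computation is shorter, and it makes the Jacobian cancellation completely transparent. That cancellation is what justifies the rule \eqref{eq:bandwidth-rule} in Remark~\ref{rem:bandwidth-rule}. However, turning its \(\asymp\) into an equality to first order requires \(f_T\) to be continuous at \(s\) and \(t\), which (F1) does not give. It also relies on the Lebesgue-density form of \(f_T^{(\psi)}\).

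Your pull-back has several advantages:
\begin{itemize}
\item It needs only the bounds in (F1).
\item It does not matter whether \(E'\) carries Lebesgue measure or \(\psi_{\#}\mu\). The clash between your notation \(f_T^{(\psi)}\,d\mu^{(\psi)}\) and Definition~\ref{def:compatible} is therefore harmless, since you immediately rewrite the quantity as a probability.
\item It gives an explicit relative error \(O(\kappa_\psi h)\) and exact equality in the affine case.
\end{itemize}

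Two small refinements are available:
\begin{itemize}
\item Both endpoints of \(\psi^{-1}(I')\) move by the same amount \(-a_sh^2\), up to \(O(h^3)\), whatever the sign of \(\psi'\). The pulled-back window is thus essentially a translate of \([s-h,s+h]\). If \(f_T\) is Lipschitz, the relative error is in fact \(O(h^2)\).
\item Your restriction to interior points is unnecessary. Because \(\psi\) maps \(E\) onto \(E'\), pulling back commutes with truncation to the domain. The truncated windows therefore still differ only by endpoint shifts of order \(\kappa_\psi h^2\), and a truncated window still has probability at least \(ah\) for small \(h\). So your bound holds at every \((s,t)\in E\times E\), as the statement asserts.
\end{itemize}
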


\begin{proof}
	The expected count at \((s,t)\) is
	\(\asymp n\,\EE[N(N-1)]\,f_T(s)f_T(t)\,h^2\). At \((\psi(s),\psi(t))\) it is
	\(\asymp n\,\EE[N(N-1)]\,f_T^{(\psi)}(\psi(s))f_T^{(\psi)}(\psi(t))\,
	h'(\psi(s))h'(\psi(t))\). By Proposition~\ref{prop:design} and
	\eqref{eq:bandwidth-rule}, \(f_T^{(\psi)}(\psi(s))=f_T(s)/|\psi'(s)|\) and
	\(h'(\psi(s))=h|\psi'(s)|\), so the four Jacobian factors cancel in pairs.
\end{proof}

\begin{remark}[Why the bandwidth rule is not a convention]
	\label{rem:bandwidth-rule}
	Proposition~\ref{prop:info} is the substantive content of
	\eqref{eq:bandwidth-rule}: it is the unique local rescaling under which
	transport is information-neutral. A globally constant bandwidth on \(E'\)
	would concentrate information where \(\psi\) compresses the domain and starve
	the regions it dilates, and the discrepancy in
	Theorem~\ref{thm:transport}(iii) would then be \(O_P(1)\) --- transport would
	no longer be a change of coordinates but a change of problem.
\end{remark}

\section{Symmetry as projection regularization}
\label{sec:equivariance}

We now turn from changes of coordinates to genuine symmetries of the process.
The organizing observation is that group averaging is not merely an
equivariance device: in \(L^2\) it is an orthogonal projection, and therefore
admits an exact statistical risk interpretation.

Let \(G\) be a finite group acting on \(E\) by \(\mu\)-preserving isometries, and
assume that the latent process and the sampling design are \(G\)-invariant in
distribution. Then \(C(g\cdot s,g\cdot t)=C(s,t)\) for all \(g\), that is, \(C\)
lies in the closed subspace
\[
\PG:=\bigl\{F\in L^2(E\times E,\mu^{\otimes2}):\
F(g\cdot s,g\cdot t)=F(s,t)\ \ \forall g\in G,\ \mu^{\otimes2}\text{-a.e.}\bigr\}.
\]
The use of group invariance to reduce a statistical problem is classical
\cite{Eaton1989,Giri1996}, and has been developed for shape and manifold data
\cite{Huckemann2010}; what is specific here is its interaction with a smoothing
step.

\begin{definition}[Equivariant sparse covariance estimator]
	\label{def:equivariant}
	The \emph{equivariant sparse covariance estimator} is
	\[
	\hat C_n^G(s,t):=\frac1{|G|}\sum_{g\in G}\hat C_n(g\cdot s,\,g\cdot t).
	\]
\end{definition}

\subsection{Averaging is a projection}

\begin{proposition}[Group averaging is the orthogonal projection onto \(\PG\)]
	\label{prop:projection}
	Assume the action of \(G\) on \(E\) is \(\mu\)-preserving. The operator
	\(\Pi_G F:=|G|^{-1}\sum_{g\in G}F(g\cdot\,\cdot,g\cdot\,\cdot)\)
	is the orthogonal projection of \(L^2(E\times E,\mu^{\otimes2})\) onto
	\(\PG\). In particular \(\hat C_n^G=\Pi_G\hat C_n\).
\end{proposition}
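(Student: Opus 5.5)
To show that \(\Pi_G\) is the orthogonal projection onto \(\PG\), I will check four facts. First, \(\Pi_G\) is a bounded linear map of \(L^2(E\times E,\mu^{\otimes2})\) into itself. Second, it is idempotent. Third, its range is exactly \(\PG\). Fourth, it is self-adjoint. An idempotent self-adjoint bounded operator is the orthogonal projection onto its range, so these four facts suffice.

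For each \(g\in G\), write \(R_gF(s,t):=F(g\cdot s,g\cdot t)\). The action is \(\mu\)-preserving, so the product action \((s,t)\mapsto(g\cdot s,g\cdot t)\) preserves \(\mu^{\otimes2}\). The change-of-variables formula then gives \(\|R_gF\|=\|F\|\). Each \(R_g\) is also well defined on a.e.-equivalence classes, because measure-preserving maps send null sets to null sets. Hence \(R_g\) is an isometry, and it is invertible with \(R_g^{-1}=R_{g^{-1}}\), so it is unitary. From the action axioms, \(R_gR_{g'}F(s,t)=F(g'g\cdot s,g'g\cdot t)\), so \(R_gR_{g'}=R_{g'g}\). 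This is a right action, but the order of composition does not matter below. Finally \(\Pi_G=|G|^{-1}\sum_g R_g\) is an average of unitaries, so it is bounded with \(\|\Pi_G\|\le1\).

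\textbf{Range and idempotence.} For fixed \(g'\), the map \(g\mapsto g'g\) is a bijection of \(G\), so \(R_{g'}\Pi_G=|G|^{-1}\sum_g R_{g'g}=\Pi_G\). This holds for every \(g'\), so \(\Pi_GF\in\PG\). Conversely, if \(F\in\PG\), then \(R_gF=F\) a.e. for every \(g\), and averaging gives \(\Pi_GF=F\). The range is therefore exactly \(\PG\), and \(\Pi_G^2=\Pi_G\). I also note that \(\PG=\bigcap_g\ker(R_g-I)\) is closed, which is consistent with the statement.

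\textbf{Self-adjointness.} Using the change of variables again, \(\langle R_gF,H\rangle=\langle F,R_{g^{-1}}H\rangle\), so \(R_g^*=R_{g^{-1}}\). Summing over \(g\) and reindexing by \(g\mapsto g^{-1}\) gives \(\Pi_G^*=\Pi_G\).

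The final claim, \(\hat C_n^G=\Pi_G\hat C_n\), is immediate from Definition~\ref{def:equivariant}, given that \(\hat C_n\in L^2\). Boundedness of the local-linear fit on the Gram event is enough for this.

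The only delicate step is the measure-theoretic one: \(R_g\) must be well defined on equivalence classes and must preserve \(\mu^{\otimes2}\). Both follow directly from the hypothesis that each \(g\) acts by a measurable, \(\mu\)-preserving bijection. The rest is finite-group bookkeeping.
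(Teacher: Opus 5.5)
Your proposal is correct and follows essentially the same route as the paper: you check boundedness, idempotence, self-adjointness via the measure-preserving substitution \(R_g^*=R_{g^{-1}}\), and that the range is \(\PG\), then invoke that a bounded self-adjoint idempotent is the orthogonal projection onto its range. The only differences are that you derive idempotence from the range characterisation instead of the paper's double-sum reindexing, and you add the welcome check that \(R_g\) is well defined on a.e.-classes; one cosmetic slip is that with your rule \(R_gR_{g'}=R_{g'g}\) you actually get \(R_{g'}\Pi_G=|G|^{-1}\sum_g R_{gg'}\), so the relevant bijection is \(g\mapsto gg'\), which changes nothing.
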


\begin{proof}
	\(\Pi_G\) is linear and bounded. It is idempotent: since \(G\) is a group,
	\(\Pi_G^2F=|G|^{-2}\sum_{g,g'}F((gg')\cdot\,\cdot,(gg')\cdot\,\cdot)=\Pi_GF\),
	the inner sum being a reindexing of \(G\) for each \(g\). It is self-adjoint:
	because the action preserves \(\mu\), the substitution \((s,t)\mapsto
	(g^{-1}s,g^{-1}t)\) is measure-preserving, so
	\(\langle F(g\cdot,g\cdot),H\rangle=\langle F,H(g^{-1}\cdot,g^{-1}\cdot)\rangle\)
	and summing over \(g\) gives \(\langle\Pi_GF,H\rangle=\langle F,\Pi_GH\rangle\).
	A bounded idempotent self-adjoint operator is the orthogonal projection onto
	its range, and its range is \(\PG\): every \(\Pi_GF\) is \(G\)-invariant, and
	\(\Pi_GF=F\) for \(F\in\PG\).
\end{proof}

\begin{theorem}[Exact projection-risk decomposition]
	\label{thm:projection-risk}
	Let \(H=L^2(E\times E,\mu^{\otimes2})\), let
	\(P=\Pi_G\), and let \(\hat C_n^G=P\hat C_n\).
	For an arbitrary covariance surface \(C\in H\), define its invariant and
	anti-invariant components by
	\[
	C_G:=PC,\qquad C_G^\perp:=(I-P)C,
	\]
	and its squared symmetry defect by
	\[
	A_G(C):=\|C_G^\perp\|_H^2.
	\]
	Then, pathwise,
	\begin{equation}
	\label{eq:projected-risk-pathwise}
	\boxed{
	\|\hat C_n^G-C\|_H^2
	=
	\|P(\hat C_n-C)\|_H^2+A_G(C).
	}
	\end{equation}
	Moreover,
	\begin{equation}
	\label{eq:risk-difference-pathwise}
	\boxed{
	\|\hat C_n-C\|_H^2-\|\hat C_n^G-C\|_H^2
	=
	\|(I-P)(\hat C_n-C)\|_H^2-A_G(C).
	}
	\end{equation}
	In particular, if \(C\in\PG\), then \(A_G(C)=0\) and
	\begin{equation}
	\label{eq:contraction}
	\|\hat C_n^G-C\|_H
	\le
	\|\hat C_n-C\|_H,
	\end{equation}
	with the exact Pythagorean gain
	\begin{equation}
	\label{eq:exact-gain-invariant}
	\boxed{
	\|\hat C_n-C\|_H^2-\|\hat C_n^G-C\|_H^2
	=
	\|(I-P)\hat C_n\|_H^2.
	}
	\end{equation}
\end{theorem}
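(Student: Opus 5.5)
The whole argument is Hilbert-space geometry, resting only on Proposition~\ref{prop:projection}, which says that \(P=\Pi_G\) is an orthogonal projection on \(H\). Hence \(H=\operatorname{ran}P\oplus\operatorname{ran}(I-P)\) orthogonally, and for every \(F\in H\) we have Pythagoras, \(\|F\|_H^2=\|PF\|_H^2+\|(I-P)F\|_H^2\). No statistical input is needed; everything holds pathwise, realisation by realisation.

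For \eqref{eq:projected-risk-pathwise}, write
\[
\hat C_n^G-C=P\hat C_n-PC-(I-P)C=P(\hat C_n-C)-C_G^\perp .
\]
The first term lies in \(\operatorname{ran}P\) and the second in \(\operatorname{ran}(I-P)\), so they are orthogonal. Pythagoras then gives \(\|P(\hat C_n-C)\|_H^2+\|C_G^\perp\|_H^2\), which is the claimed identity with \(A_G(C)=\|C_G^\perp\|_H^2\).

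For \eqref{eq:risk-difference-pathwise}, apply the decomposition to \(F=\hat C_n-C\):
\[
\|\hat C_n-C\|_H^2=\|P(\hat C_n-C)\|_H^2+\|(I-P)(\hat C_n-C)\|_H^2 .
\]
Subtracting \eqref{eq:projected-risk-pathwise} cancels the invariant component and leaves \(\|(I-P)(\hat C_n-C)\|_H^2-A_G(C)\).

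If \(C\in\PG\), then \(PC=C\), so \(A_G(C)=0\) and \((I-P)(\hat C_n-C)=(I-P)\hat C_n\). This yields \eqref{eq:exact-gain-invariant}. The right-hand side is nonnegative, so \eqref{eq:contraction} follows by taking square roots.

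There is no real obstacle here. The only point to check is that \(\hat C_n\in H\), so that all the norms are finite. On the event where the local-linear fits are defined, \(\hat C_n\) is a bounded measurable surface on the compact \(E\times E\), so it lies in \(H\). Elsewhere the identities can be read with an arbitrary convention for \(\hat C_n\) in \(H\), since they hold for every element of \(H\).
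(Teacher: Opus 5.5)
Your proof is correct and follows the paper's own argument: you split \(\hat C_n^G-C=P(\hat C_n-C)-(I-P)C\) into orthogonal pieces and apply Pythagoras, do the same for \(\hat C_n-C\), subtract, and then specialise to \(PC=C\) to obtain the exact gain and the contraction. Your closing remark that \(\hat C_n\in H\) makes explicit a standing assumption that the paper leaves implicit.
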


\begin{proof}
	Since \(P\) is an orthogonal projection,
	\[
	\hat C_n^G-C
	=
	P(\hat C_n-C)-(I-P)C,
	\]
	and the two terms on the right belong to the orthogonal spaces
	\(\operatorname{Ran}(P)\) and \(\operatorname{Ran}(I-P)\). Pythagoras gives
	\eqref{eq:projected-risk-pathwise}. Likewise
	\[
	\hat C_n-C
	=
	P(\hat C_n-C)+(I-P)(\hat C_n-C),
	\]
	so
	\[
	\|\hat C_n-C\|_H^2
	=
	\|P(\hat C_n-C)\|_H^2
	+
	\|(I-P)(\hat C_n-C)\|_H^2.
	\]
	Subtracting \eqref{eq:projected-risk-pathwise} proves
	\eqref{eq:risk-difference-pathwise}. If \(PC=C\), then
	\((I-P)(\hat C_n-C)=(I-P)\hat C_n\), which yields
	\eqref{eq:contraction} and \eqref{eq:exact-gain-invariant}.
\end{proof}

\begin{corollary}[Exact expected-risk gain]
	\label{cor:expected-risk}
	Whenever the displayed quantities are integrable,
	\begin{equation}
	\label{eq:expected-projected-risk}
	\boxed{
	\EE\|\hat C_n^G-C\|_H^2
	=
	\EE\|P(\hat C_n-C)\|_H^2+A_G(C),
	}
	\end{equation}
	and
	\begin{equation}
	\label{eq:expected-risk-difference}
	\boxed{
	\EE\|\hat C_n-C\|_H^2
	-
	\EE\|\hat C_n^G-C\|_H^2
	=
	\EE\|(I-P)(\hat C_n-C)\|_H^2-A_G(C).
	}
	\end{equation}
	Hence projection is risk-non-increasing if and only if
	\begin{equation}
	\label{eq:oracle-projection-condition}
	A_G(C)
	\le
	\EE\|(I-P)(\hat C_n-C)\|_H^2,
	\end{equation}
	and it is strictly risk-improving when the inequality is strict.
	Under exact invariance,
	\[
	\boxed{
	\EE\|\hat C_n-C\|_H^2
	-
	\EE\|\hat C_n^G-C\|_H^2
	=
	\EE\|(I-P)\hat C_n\|_H^2\ge0.
	}
	\]
\end{corollary}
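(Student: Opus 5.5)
The plan is to take expectations in the pathwise identities of Theorem~\ref{thm:projection-risk} and then read off the sign conditions. Integrability of the displayed quantities is assumed. Since $A_G(C)=\|(I-P)C\|_H^2$ is deterministic, expectation is linear on the integrable terms. Taking $\EE$ of \eqref{eq:projected-risk-pathwise} therefore gives \eqref{eq:expected-projected-risk}, and taking $\EE$ of \eqref{eq:risk-difference-pathwise} gives \eqref{eq:expected-risk-difference}.

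The one point needing care is the splitting of expectations of differences. If $\EE\|\hat C_n-C\|_H^2<\infty$, then each of $\|P(\hat C_n-C)\|_H^2$ and $\|(I-P)(\hat C_n-C)\|_H^2$ is dominated by $\|\hat C_n-C\|_H^2$, because $P$ and $I-P$ are orthogonal projections. Both are therefore integrable, and the left-hand side of \eqref{eq:expected-risk-difference} is a difference of finite numbers. This justifies writing the expected difference as the difference of expected risks.

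For the equivalence, \eqref{eq:expected-risk-difference} shows that $\EE\|\hat C_n-C\|_H^2-\EE\|\hat C_n^G-C\|_H^2\ge0$ if and only if $\EE\|(I-P)(\hat C_n-C)\|_H^2\ge A_G(C)$, which is \eqref{eq:oracle-projection-condition}. Strict inequality on one side corresponds to strict inequality on the other, which gives the strict-improvement claim.

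Under exact invariance $PC=C$, we have $A_G(C)=0$ and $(I-P)(\hat C_n-C)=(I-P)\hat C_n$. Taking expectations of \eqref{eq:exact-gain-invariant} then yields the final boxed identity, and nonnegativity follows because the integrand is a squared norm. There is no real obstacle here: the only step needing attention is the integrability bookkeeping in the second paragraph, which the projection contraction handles.
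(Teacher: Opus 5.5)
Your proposal is correct and is essentially the paper's argument: the corollary follows by taking expectations in the pathwise identities of Theorem~\ref{thm:projection-risk}, with $A_G(C)$ deterministic. Your integrability remark, that both $\|P(\hat C_n-C)\|_H^2$ and $\|(I-P)(\hat C_n-C)\|_H^2$ are dominated by $\|\hat C_n-C\|_H^2$, fills in bookkeeping the paper leaves implicit and correctly justifies splitting the expected difference.
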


\begin{remark}[Variance reduction versus symmetry misspecification]
	\label{rem:free-vs-not}
	Equations \eqref{eq:expected-projected-risk}--\eqref{eq:expected-risk-difference}
	separate the two effects exactly.  The invariant component of the estimation
	error is retained, while the anti-invariant estimation error is discarded at
	the price of the deterministic misspecification cost \(A_G(C)\). Thus the
	relevant oracle comparison is not merely ``projection contracts'', but
	\[
	\text{anti-invariant estimation error}
	\quad\hbox{versus}\quad
	\text{true symmetry defect}.
	\]
	When \(C\in\PG\), the latter is zero and the improvement is deterministic,
	non-asymptotic and valid for every bandwidth. Section~\ref{sec:effective-group-size}
	quantifies the removable stochastic component through the covariance of the
	estimation error along group orbits; model-specific sharp saturation laws
	require a separate action-specific calculation.
\end{remark}

\begin{corollary}[Amplitude threshold for a fixed anti-invariant direction]
	\label{cor:amplitude-threshold}
	Suppose
	\[
	C=C_G+A\,D,\qquad
	C_G\in\PG,\quad
	D\in\PG^\perp,\quad
	\|D\|_H=1.
	\]
	Then \(A_G(C)=A^2\), and projection does not increase expected squared error
	exactly when
	\[
	\boxed{
	A^2
	\le
	\EE\|(I-\Pi_G)(\hat C_n-C)\|_H^2.
	}
	\]
	Thus the population oracle crossover amplitude is
	\[
	\boxed{
	A^\star_n
	=
	\left\{
	\EE\|(I-\Pi_G)(\hat C_n-C)\|_H^2
	\right\}^{1/2}.
	}
	\]
\end{corollary}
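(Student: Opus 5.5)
The plan is to obtain Corollary~\ref{cor:amplitude-threshold} as a direct specialization of Corollary~\ref{cor:expected-risk}. Only one new computation is needed, namely the symmetry defect \(A_G(C)\) for the given decomposition. After that the threshold condition \eqref{eq:oracle-projection-condition} can be read off.

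First, compute \(A_G(C)\). Since \(P=\Pi_G\) is the orthogonal projection onto \(\PG\) (Proposition~\ref{prop:projection}), we have \(PC_G=C_G\) because \(C_G\in\PG\). We also have \(PD=0\) because \(D\in\PG^\perp=\operatorname{Ran}(I-P)\). Linearity then gives
\[
(I-P)C=(I-P)C_G+A(I-P)D=0+AD,
\]
so
\[
A_G(C)=\|(I-P)C\|_H^2=A^2\|D\|_H^2=A^2 .
\]
This also confirms that the decomposition in the statement coincides with the canonical one of Theorem~\ref{thm:projection-risk}: \(PC=C_G\) and \((I-P)C=AD\).

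Second, substitute \(A_G(C)=A^2\) into \eqref{eq:expected-risk-difference}. This yields
\[
\EE\|\hat C_n-C\|_H^2-\EE\|\hat C_n^G-C\|_H^2=\EE\|(I-\Pi_G)(\hat C_n-C)\|_H^2-A^2 .
\]
The left-hand side is nonnegative exactly when \(A^2\le \EE\|(I-\Pi_G)(\hat C_n-C)\|_H^2\), which is the displayed condition. The crossover amplitude \(A_n^\star\) is the nonnegative square root of the right-hand side: the risk difference vanishes at \(|A|=A_n^\star\), and the inequality holds precisely for \(|A|\le A_n^\star\).

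The only point needing care is that \(\hat C_n\) itself may depend on \(A\), since the data are generated from \(C\). The quantity \(\EE\|(I-\Pi_G)(\hat C_n-C)\|_H^2\) is therefore evaluated at the same \(C\), and the ``threshold'' is an oracle statement at that fixed model rather than a monotone crossover in \(A\). I would state this explicitly, together with the integrability hypothesis inherited from Corollary~\ref{cor:expected-risk}. Beyond this, there is no genuine obstacle, because the argument is purely Hilbert-space algebra.
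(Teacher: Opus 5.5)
Your proposal is correct and follows the paper's route. The corollary is an immediate specialization of Corollary~\ref{cor:expected-risk}, and the only new computation is $(I-\Pi_G)C = AD$, which gives $A_G(C)=A^2$. Your caveat is also apt: $\EE\|(I-\Pi_G)(\hat C_n-C)\|_H^2$ depends on $A$ through the data-generating $C$, so the threshold is an oracle statement at a fixed model. The paper itself addresses this in Section~\ref{sec:x2}, where it measures the threshold at $A=0$ and notes that the empirical crossover is delayed because the classical error grows with $A$.
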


\subsection{Quantifying the gain: orbit covariance and effective group size}
\label{sec:effective-group-size}

The preceding identities are exact but do not by themselves reveal how much
risk is removed by the projection.  That quantity is governed by the
correlation of the estimation error along a group orbit.  The following result
makes this dependence explicit without imposing an independence assumption
between orbit points.

Let \(T_g:H\to H\) denote the unitary representation induced by the diagonal
action,
\[
(T_gF)(s,t)=F(g\cdot s,g\cdot t),
\qquad H=L^2(E\times E,\mu^{\otimes2}).
\]
Under exact invariance \(C\in\PG\), put
\[
\varepsilon_h:=\hat C_{n,h}-C
\]
and define the orbit covariance function
\begin{equation}
\label{eq:orbit-covariance}
\Gamma_h(g)
:=
\EE\langle \varepsilon_h,T_g\varepsilon_h\rangle_H,
\qquad g\in G.
\end{equation}

\begin{theorem}[Exact orbit-covariance formula]
	\label{thm:orbit-covariance}
	Assume \(C\in\PG\) and
	\(\EE\|\varepsilon_h\|_H^2<\infty\). Then
	\begin{equation}
	\label{eq:projected-risk-orbit}
	\boxed{
	\EE\|\Pi_G\varepsilon_h\|_H^2
	=
	\frac1{|G|}
	\sum_{g\in G}\Gamma_h(g).
	}
	\end{equation}
	Consequently,
	\begin{equation}
	\label{eq:gain-orbit}
	\boxed{
	\EE\|\varepsilon_h\|_H^2
	-
	\EE\|\Pi_G\varepsilon_h\|_H^2
	=
	\Gamma_h(e)
	-
	\frac1{|G|}\sum_{g\in G}\Gamma_h(g).
	}
	\end{equation}
\end{theorem}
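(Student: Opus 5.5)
We will prove \eqref{eq:projected-risk-orbit} pathwise first and then take expectations. The key fact is Proposition~\ref{prop:projection}: \(\Pi_G=|G|^{-1}\sum_{g\in G}T_g\) is an orthogonal projection on \(H\). Because the action preserves \(\mu\), each \(T_g\) is unitary, with \(T_g^*=T_{g^{-1}}\). Idempotence and self-adjointness then give, for every \(F\in H\),
\[
\|\Pi_G F\|_H^2=\langle \Pi_G F,\Pi_G F\rangle_H=\langle F,\Pi_G^*\Pi_G F\rangle_H=\langle F,\Pi_G F\rangle_H
=\frac1{|G|}\sum_{g\in G}\langle F,T_gF\rangle_H .
\]
We apply this identity with \(F=\varepsilon_h\) for each realisation. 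All terms are finite, since \(|\langle \varepsilon_h,T_g\varepsilon_h\rangle_H|\le\|\varepsilon_h\|_H^2\) by Cauchy--Schwarz and unitarity. The assumption \(\EE\|\varepsilon_h\|_H^2<\infty\) therefore makes each \(\Gamma_h(g)\) well defined, and linearity of expectation over the finite sum gives \eqref{eq:projected-risk-orbit}.

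For \eqref{eq:gain-orbit}, note that \(\Gamma_h(e)=\EE\langle\varepsilon_h,\varepsilon_h\rangle_H=\EE\|\varepsilon_h\|_H^2\) because \(T_e=I\). Subtracting \eqref{eq:projected-risk-orbit} yields the second display. It is also worth recording the link with Corollary~\ref{cor:expected-risk}. Under \(C\in\PG\) we have \(\Pi_G\varepsilon_h=\hat C^G_{n}-C\), so the left-hand side of \(\eqref{eq:gain-orbit}\) is exactly the expected risk gain \(\EE\|(I-P)\hat C_n\|_H^2\). This consistency check is immediate, and the invariance assumption is used only at this point, to interpret \(\Pi_G\varepsilon_h\) as the error of the projected estimator. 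The identity for \(\|\Pi_G\varepsilon_h\|^2\) itself holds for any random element of \(H\).

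The only step needing any care is the identity \(\Pi_G^*\Pi_G=\Pi_G\). It is already supplied by Proposition~\ref{prop:projection}, so the real content is checking measure preservation. Measure preservation gives unitarity of \(T_g\) on \(L^2(E\times E,\mu^{\otimes 2})\), and hence the real-valuedness and conjugate symmetry \(\Gamma_h(g^{-1})=\Gamma_h(g)\). That symmetry is not needed for the formula, but it confirms that the right-hand side is real and nonnegative. There is no genuine obstacle here. The argument is a finite-dimensional-style averaging identity and uses no independence between orbit points, which matches the statement's emphasis.
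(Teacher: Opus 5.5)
Your proof is correct, but you reach the single sum by a different step than the paper does.

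\textbf{The two routes.} The paper expands $\|\Pi_G\varepsilon_h\|_H^2$ as the double sum $|G|^{-2}\sum_{g,r}\langle T_g\varepsilon_h,T_r\varepsilon_h\rangle_H$. It uses unitarity to rewrite each term as $\langle\varepsilon_h,T_u\varepsilon_h\rangle_H$ for a single element $u$, and then counts that every $u\in G$ arises from exactly $|G|$ pairs $(g,r)$. You instead invoke Proposition~\ref{prop:projection}. Since $\Pi_G$ is self-adjoint and idempotent, $\|\Pi_G F\|_H^2=\langle F,\Pi_G F\rangle_H$, and this collapses to the single sum at once.

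\textbf{What each buys.} The underlying reindexing of $G$ is the same; in your version it sits inside the idempotence $\Pi_G^2=\Pi_G$. So the difference is packaging. Your route is shorter, and it makes clear that the identity is pathwise and holds for any square-integrable random element of $H$. As you note, $C\in\PG$ is needed only to read $\Pi_G\varepsilon_h$ as $\hat C_n^G-C$. You also give the Cauchy--Schwarz integrability check that the paper leaves implicit. The paper's route is self-contained: it uses only unitarity of the $T_g$ and the group law, and shows the orbit covariance arising from all pairwise inner products along the orbit.

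\textbf{A correction to your closing aside.} The symmetry $\Gamma_h(g^{-1})=\Gamma_h(g)$ does not by itself show that $\sum_{g}\Gamma_h(g)\ge0$. Nonnegativity follows from the identity you just proved, whose left-hand side is $\EE\|\Pi_G\varepsilon_h\|_H^2\ge0$ (equivalently, from positive-definiteness of $\Gamma_h$ as a function on $G$).
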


\begin{proof}
	Since
	\[
	\Pi_G\varepsilon_h
	=
	\frac1{|G|}\sum_{g\in G}T_g\varepsilon_h,
	\]
	unitarity gives
	\[
	\begin{aligned}
	\EE\|\Pi_G\varepsilon_h\|_H^2
	&=
	\frac1{|G|^2}
	\sum_{g,r\in G}
	\EE\langle T_g\varepsilon_h,T_r\varepsilon_h\rangle_H\\
	&=
	\frac1{|G|^2}
	\sum_{g,r\in G}
	\EE\langle \varepsilon_h,T_{g^{-1}r}\varepsilon_h\rangle_H.
	\end{aligned}
	\]
	For every \(u\in G\), exactly \(|G|\) pairs \((g,r)\) satisfy
	\(g^{-1}r=u\). This proves \eqref{eq:projected-risk-orbit}.
	Equation \eqref{eq:gain-orbit} follows because
	\(\Gamma_h(e)=\EE\|\varepsilon_h\|_H^2\).
\end{proof}

\begin{definition}[Effective orbit size]
	\label{def:effective-orbit-size}
	Whenever \(\sum_{g\in G}\Gamma_h(g)>0\), define
	\begin{equation}
	\label{eq:reff}
	r_{\mathrm{eff}}(G,h)
	:=
	\frac{|G|\,\Gamma_h(e)}
	{\sum_{g\in G}\Gamma_h(g)}.
	\end{equation}
\end{definition}

If the orbit covariances are nonnegative, then
\(1\le r_{\mathrm{eff}}(G,h)\le |G|\), and
Theorem~\ref{thm:orbit-covariance} becomes
\begin{equation}
\label{eq:effective-risk}
\boxed{
\EE\|\hat C_{n,h}^G-C\|_H^2
=
\frac{\EE\|\hat C_{n,h}-C\|_H^2}
{r_{\mathrm{eff}}(G,h)}.
}
\end{equation}
Thus \(|G|\) is only the maximal possible variance-reduction factor; the
attainable factor is the effective orbit size.

\begin{remark}[Why \(|G|\) is not automatically the gain]
	The estimators evaluated at \(g\cdot(s,t)\) are built from the same
	subjects. They are therefore generally correlated even when their smoothing
	windows do not overlap. In particular, trajectory-level fluctuations can
	remain common across the orbit. Formula~\eqref{eq:projected-risk-orbit}
	makes explicit that a factor \(|G|\) requires decorrelation of the
	non-invariant error component, not group size alone.
\end{remark}

To connect \(r_{\mathrm{eff}}\) to bandwidth and orbit separation, let
\[
d_2\bigl((s,t),(s',t')\bigr)
=
\max\{d(s,s'),d(t,t')\}
\]
and define the essential orbit separation
\[
\delta_G
=
\operatorname*{ess\,inf}_{(s,t)}
\min_{g\neq e}
d_2\bigl((s,t),(g\cdot s,g\cdot t)\bigr).
\]
For free actions such as cyclic rotations on the circle, \(\delta_G>0\).

\begin{assumption}[Orbit-localized stochastic covariance]
	\label{ass:orbit-localization}
	There exist nonnegative quantities \(V_{\mathrm{loc}}(n,h)\),
	\(V_{\mathrm{traj}}(n)\), and a nonincreasing function
	\(\rho:[0,\infty)\to[0,1]\), with \(\rho(0)=1\), such that
	\[
	\Gamma_h(e)
	\asymp
	V_{\mathrm{loc}}(n,h)+V_{\mathrm{traj}}(n),
	\]
	and, for every \(g\neq e\),
	\begin{equation}
	\label{eq:orbit-localization}
	\Gamma_h(g)
	\le
	V_{\mathrm{traj}}(n)
	+
	V_{\mathrm{loc}}(n,h)\,
	\rho\!\left(\frac{\delta_g}{h}\right),
	\end{equation}
	where
	\[
	\delta_g
	=
	\operatorname*{ess\,inf}_{(s,t)}
	d_2\bigl((s,t),(g\cdot s,g\cdot t)\bigr).
	\]
\end{assumption}

The decomposition has a direct statistical meaning.  The term
\(V_{\mathrm{loc}}\) contains fluctuations generated by locally available
observation pairs and first-order within-subject sampling variation, whereas
\(V_{\mathrm{traj}}\) represents between-trajectory fluctuation.  The exact
orbit-covariance identity above does not require a particular variance rate.
To obtain the familiar sparse-design orders we isolate the additional rate
assumption explicitly.

\begin{assumption}[Sparse covariance variance scale]
\label{ass:sparse-variance-scale}
In the bandwidth range used below,
\begin{equation}
\label{eq:variance-scales}
V_{\mathrm{loc}}(n,h)
\lesssim
\frac1{n\,m\,h}
+
\frac1{n\,\nu_2\,h^2},
\qquad
V_{\mathrm{traj}}(n)
\lesssim
\frac1n,
\qquad
\nu_2=\EE[N(N-1)]\asymp m^2.
\end{equation}
\end{assumption}

These are the standard stochastic orders for sparse covariance smoothing under
the usual moment and design regularity conditions.  F6 uses them only to
translate the exact orbit formula into an interpretable rate statement; it
does not claim new sharp constants for these classical variance components.

\begin{corollary}[Gain under separated orbits]
	\label{cor:separated-orbits}
	Under Assumption~\ref{ass:orbit-localization},
	\begin{equation}
	\label{eq:projected-risk-localized}
	\EE\|\hat C_{n,h}^G-C\|_H^2
	\le
	V_{\mathrm{traj}}(n)
	+
	\frac{V_{\mathrm{loc}}(n,h)}{|G|}
	\left[
	1+\sum_{g\neq e}
	\rho\!\left(\frac{\delta_g}{h}\right)
	\right]
\end{equation}
	up to multiplicative constants in the definition of
	\(V_{\mathrm{loc}}+V_{\mathrm{traj}}\).
	In particular, if the local covariance is compactly supported in the sense
	that \(\rho(u)=0\) for \(u>2\), and
	\[
	\delta_G>2h,
	\]
	then
	\begin{equation}
	\label{eq:fully-separated-risk}
	\boxed{
	\EE\|\hat C_{n,h}^G-C\|_H^2
	\lesssim
	V_{\mathrm{traj}}(n)
	+
	\frac{V_{\mathrm{loc}}(n,h)}{|G|}.
	}
	\end{equation}
	Thus the removable part of the risk is reduced by the full factor
	\(|G|\), while the trajectory floor need not be.
\end{corollary}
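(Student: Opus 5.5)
The plan is to start from the exact orbit-covariance formula of Theorem~\ref{thm:orbit-covariance}, which under \(C\in\PG\) gives
\[
\EE\|\hat C^G_{n,h}-C\|_H^2=\EE\|\Pi_G\varepsilon_h\|_H^2=\frac1{|G|}\sum_{g\in G}\Gamma_h(g),
\]
using \(\Pi_G C=C\), so that \(\hat C^G_{n,h}-C=\Pi_G\varepsilon_h\). Everything then reduces to bounding the orbit sum term by term with Assumption~\ref{ass:orbit-localization}.

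First, I split off the identity term. Assumption~\ref{ass:orbit-localization} gives \(\Gamma_h(e)\lesssim V_{\mathrm{loc}}+V_{\mathrm{traj}}\). The constant hidden here is the multiplicative constant the statement allows. For each \(g\neq e\), the bound \eqref{eq:orbit-localization} gives \(\Gamma_h(g)\le V_{\mathrm{traj}}+V_{\mathrm{loc}}\rho(\delta_g/h)\). Summing over all \(g\) and dividing by \(|G|\) gives a trajectory contribution of at most \(\{(V_{\mathrm{traj}}\cdot C)+(|G|-1)V_{\mathrm{traj}}\}/|G|\lesssim V_{\mathrm{traj}}\). The local contribution is \(V_{\mathrm{loc}}|G|^{-1}[1+\sum_{g\neq e}\rho(\delta_g/h)]\), up to the same constant. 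This is exactly \eqref{eq:projected-risk-localized}.

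Second, for the separated case, I note that \(\delta_g\ge\delta_G\) for every \(g\neq e\). This holds because the essential infimum over \((s,t)\) of a minimum over \(g\neq e\) is at most the essential infimum of each individual term. So \(\delta_g/h\ge\delta_G/h>2\), and compact support gives \(\rho(\delta_g/h)=0\). Since \(\rho\) is nonincreasing, it also suffices that \(\rho\) vanishes beyond \(2\). The bracket then collapses to \(1\), which yields \eqref{eq:fully-separated-risk}. The concluding sentence just reads off the two terms: only \(V_{\mathrm{loc}}\) is divided by \(|G|\).

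The only delicate point is bookkeeping of constants. The assumption on \(\Gamma_h(e)\) is a two-sided \(\asymp\), while the bound for \(g\neq e\) is a sharp inequality. I will therefore state explicitly that the constant in \(\Gamma_h(e)\lesssim\cdots\) multiplies only the single identity term, and hence stays bounded independently of \(|G|\). That is why the result is stated ``up to multiplicative constants''.

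Two regularity points need checking. Nonnegativity of the \(\Gamma_h(g)\) is not needed, since only upper bounds are summed. Integrability is assumed as in Theorem~\ref{thm:orbit-covariance}.
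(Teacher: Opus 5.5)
Your proposal is correct and is essentially the paper's argument: substitute the orbit-localization bound \eqref{eq:orbit-localization} into the orbit-covariance formula \eqref{eq:projected-risk-orbit}, then use \(\delta_g\ge\delta_G>2h\) so that every local cross term \(\rho(\delta_g/h)\) vanishes. The only difference is that you make explicit two points the paper leaves implicit, namely the constant bookkeeping for the identity term and the inequality \(\delta_g\ge\delta_G\).
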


\begin{proof}
	Insert \eqref{eq:orbit-localization} into
	\eqref{eq:projected-risk-orbit}. For the compact-support case, all terms
	with \(g\neq e\) lose their local component when \(\delta_g>2h\).
\end{proof}

\begin{corollary}[Sparse-design scaling of the symmetry gain]
	\label{cor:sparse-gain}
	Assume Assumption~\ref{ass:sparse-variance-scale} and the separated-orbit regime of
	Corollary~\ref{cor:separated-orbits}. Then
	\[
	\EE\|\hat C_{n,h}^G-C\|_H^2
	\lesssim
	\frac1n
	+
	\frac1{|G|}
	\left[
	\frac1{nmh}
	+
	\frac1{n\nu_2h^2}
	\right],
	\]
	whereas the unprojected stochastic risk has order
	\[
	\frac1n+\frac1{nmh}+\frac1{n\nu_2h^2}.
	\]
	Hence the removable local stochastic component is bounded at the scale
	\begin{equation}
	\label{eq:symmetry-gain-order}
	\boxed{
	\left(1-\frac1{|G|}\right)
	\left[
	\frac1{nmh}
	+
	\frac1{n\nu_2h^2}
	\right],
	}
	\end{equation}
	up to model-dependent constants, until the \(n^{-1}\) trajectory floor
	becomes dominant. A matching lower bound would require additional
	nondegeneracy assumptions and is not asserted here.
\end{corollary}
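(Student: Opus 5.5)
The bound follows by combining Corollary~\ref{cor:separated-orbits} with the variance scales of Assumption~\ref{ass:sparse-variance-scale}. It then remains to compare with the unprojected risk, using \(\Gamma_h(e)=\EE\|\hat C_{n,h}-C\|_H^2\). Throughout we work under exact invariance \(C\in\PG\), as in Theorem~\ref{thm:orbit-covariance}. In that case \(\hat C^G_{n,h}-C=\Pi_G\varepsilon_h\), so the projected risk is exactly \(|G|^{-1}\sum_g\Gamma_h(g)\).

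First, in the separated-orbit regime (\(\rho(u)=0\) for \(u>2\) and \(\delta_G>2h\)), every \(g\neq e\) satisfies \(\delta_g\ge\delta_G>2h\). Hence \(\rho(\delta_g/h)=0\), and \eqref{eq:fully-separated-risk} gives
\[
\EE\|\hat C^G_{n,h}-C\|_H^2\lesssim V_{\mathrm{traj}}(n)+\frac{V_{\mathrm{loc}}(n,h)}{|G|}.
\]
Inserting \eqref{eq:variance-scales} bounds \(V_{\mathrm{traj}}\) by \(n^{-1}\) and \(V_{\mathrm{loc}}\) by \((nmh)^{-1}+(n\nu_2h^2)^{-1}\), which is the first display. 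For the unprojected risk, Assumption~\ref{ass:orbit-localization} gives \(\Gamma_h(e)\asymp V_{\mathrm{loc}}+V_{\mathrm{traj}}\). The upper bounds of \eqref{eq:variance-scales} therefore yield the stated order \(n^{-1}+(nmh)^{-1}+(n\nu_2h^2)^{-1}\). This should be read as an upper order only, in keeping with the corollary's disclaimer about matching lower bounds.

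For the gain \eqref{eq:symmetry-gain-order}, apply \eqref{eq:gain-orbit}:
\[
\Gamma_h(e)-\frac{1}{|G|}\sum_g\Gamma_h(g)=\Bigl(1-\frac1{|G|}\Bigr)\Gamma_h(e)-\frac1{|G|}\sum_{g\neq e}\Gamma_h(g).
\]
Writing \(\Gamma_h(e)\asymp V_{\mathrm{loc}}+V_{\mathrm{traj}}\) and using \(\Gamma_h(g)\le V_{\mathrm{traj}}\) for \(g\neq e\) in the separated regime, the local part of the removable risk is \(\bigl(1-|G|^{-1}\bigr)V_{\mathrm{loc}}\) up to constants. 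The trajectory contributions then only matter at order \(V_{\mathrm{traj}}\lesssim n^{-1}\). This is the trajectory floor: the statement is meaningful only while the local terms dominate \(n^{-1}\). Bounding \(V_{\mathrm{loc}}\) by \eqref{eq:variance-scales} gives \eqref{eq:symmetry-gain-order}.

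The main obstacle is this last step. The trajectory terms enter \(\Gamma_h(e)\) only up to \(\asymp\) constants, and enter \(\Gamma_h(g)\) only through upper bounds. So an exact cancellation of \(V_{\mathrm{traj}}\) in the gain is unavailable. I would handle this by stating the gain as an upper-bound scale "up to model-dependent constants and modulo \(O(n^{-1})\)", exactly as the corollary is phrased, rather than as a two-sided identity.
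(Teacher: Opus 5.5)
Your first two displays are obtained as the paper intends; it writes no separate proof. The projected bound comes from substituting \eqref{eq:variance-scales} into \eqref{eq:fully-separated-risk}, and the unprojected order from substituting it into $\Gamma_h(e)\asymp V_{\mathrm{loc}}+V_{\mathrm{traj}}$. Your check $\delta_g\ge\delta_G>2h$ and the restriction to $C\in\PG$ are correct. In the paper, the boxed scale \eqref{eq:symmetry-gain-order} is simply the difference $V_{\mathrm{loc}}-V_{\mathrm{loc}}/|G|$ between the local parts of those two displays, bounded above via \eqref{eq:variance-scales}.

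Your gain step, however, runs in the wrong direction. Inserting $\Gamma_h(g)\le V_{\mathrm{traj}}$ for $g\neq e$ into your rewriting of \eqref{eq:gain-orbit} gives
\[
\Gamma_h(e)-\frac1{|G|}\sum_{g\in G}\Gamma_h(g)\;\ge\;\Bigl(1-\frac1{|G|}\Bigr)\bigl\{\Gamma_h(e)-V_{\mathrm{traj}}\bigr\}.
\]
This is a lower bound on the gain, of order $(1-|G|^{-1})V_{\mathrm{loc}}-O(V_{\mathrm{traj}})$. You then bound $V_{\mathrm{loc}}$ from above by \eqref{eq:variance-scales}. Chaining a lower bound with an upper bound yields nothing, so your argument does not deliver the upper-bound reading you propose in your final paragraph.

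An upper bound on the gain needs a lower bound on the off-identity orbit covariances, which Assumption~\ref{ass:orbit-localization} does not provide. Unitarity of $T_g$ together with Cauchy--Schwarz does: $|\Gamma_h(g)|\le\Gamma_h(e)$. Hence the gain is at most $2(1-|G|^{-1})\Gamma_h(e)\lesssim(1-|G|^{-1})\{(nmh)^{-1}+(n\nu_2h^2)^{-1}+n^{-1}\}$. That is \eqref{eq:symmetry-gain-order} up to constants, modulo the trajectory floor. Equivalently, the gain is at most $\Gamma_h(e)$, and $1-|G|^{-1}\ge 1/2$ whenever $|G|\ge 2$.

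Your lower bound is still a worthwhile by-product. It shows that the gain is at least of order $(1-|G|^{-1})V_{\mathrm{loc}}$ once that term dominates $V_{\mathrm{traj}}$. The only ingredient missing for the matching lower bound that the corollary disclaims is a lower bound on $V_{\mathrm{loc}}$ itself.
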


\begin{remark}[Overlap and saturation]
	When \(\delta_G/h\) is small, the terms
	\(\rho(\delta_g/h)\) in \eqref{eq:projected-risk-localized} prevent the gain
	from scaling linearly with \(|G|\).  This is the general F6 formulation of
	orbit-overlap saturation.  Exact saturation laws for particular actions,
	such as cyclic rotations, require a model-specific calculation of
	\(\rho\) and are deliberately left to the companion phase-transition
	analysis \cite{NembePhase}.
\end{remark}

\begin{remark}[Approximate symmetry threshold with quantified variance]
	Combining Corollary~\ref{cor:amplitude-threshold} with
	\eqref{eq:symmetry-gain-order} shows that, in the separated-orbit sparse
	regime, a unit-norm anti-invariant perturbation \(A D\) is worth projecting
	out whenever, to first order,
	\[
	A^2
	\lesssim
	\left(1-\frac1{|G|}\right)
	\left[
	\frac1{nmh}
	+
	\frac1{n\nu_2h^2}
	\right].
	\]
	This converts the qualitative bias--variance crossover into an explicit
	sample-size, sparsity, bandwidth, and group-size scale.
\end{remark}

\subsection{Interaction with transport}

\begin{proposition}[Transport conjugates equivariance]
	\label{prop:conjugation}
	Let \(\psi:E\to E'\) satisfy \eqref{eq:jacobian} and suppose \(\psi\)
	conjugates the \(G\)-action on \(E\) to an action of \(G\) on \(E'\), that is
	\(\psi(g\cdot t)=g\ast\psi(t)\) for a \(\mu^{(\psi)}\)-preserving action
	\(\ast\), where \(\mu^{(\psi)}=\psi_{\#}\mu\). Then
	\[
	U_\psi\,\Pi_G\,U_\psi^{*}=\Pi_{G}^{\ast},
	\qquad\text{hence}\qquad
	U_\psi\,C^{G}\,U_\psi^{*}=\bigl(C^{(\psi)}\bigr)^{G} .
	\]
	At the estimator level the two operations commute up to the bound of
	Theorem~\ref{thm:transport}(iii), and exactly when \(\psi\) is affine.
\end{proposition}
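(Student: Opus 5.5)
The plan is to prove the operator identity first and then read off the covariance statement. Both sides of \(U_\psi\,\Pi_G\,U_\psi^{*}=\Pi_G^{\ast}\) are bounded operators on \(L^2(E'\times E',\mu^{(\psi)\otimes2})\), so it suffices to check equality on an arbitrary \(F\). Here \(U_\psi\) is taken as the induced unitary on kernels, \((U_\psi F)(s',t')=F(\psi^{-1}(s'),\psi^{-1}(t'))\). This map is unitary from \(L^2(E\times E,\mu^{\otimes2})\) to \(L^2(E'\times E',\mu^{(\psi)\otimes2})\) precisely because \(\mu^{(\psi)}=\psi_{\#}\mu\), and its adjoint is \((U_\psi^*F)(s,t)=F(\psi(s),\psi(t))\).

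The conjugation hypothesis \(\psi(g\cdot t)=g\ast\psi(t)\) gives \(\psi^{-1}(g\ast s')=g\cdot\psi^{-1}(s')\). Hence
\[
(U_\psi\Pi_GU_\psi^*F)(s',t')=\frac1{|G|}\sum_{g\in G}F\bigl(\psi(g\cdot\psi^{-1}(s')),\psi(g\cdot\psi^{-1}(t'))\bigr)
=\frac1{|G|}\sum_{g\in G}F(g\ast s',g\ast t'),
\]
which is \((\Pi_G^\ast F)(s',t')\). By Proposition~\ref{prop:projection} applied on \(E'\), the right-hand side is the orthogonal projection onto the \(\ast\)-invariant subspace, because \(\ast\) preserves \(\mu^{(\psi)}\). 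One should also check that \(\mu\)-preservation of the \(G\)-action on \(E\) implies \(\mu^{(\psi)}\)-preservation of \(\ast\), so that this hypothesis is automatic.

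Next apply the identity to the covariance kernel. Theorem~\ref{thm:transport}(i) gives \(C^{(\psi)}=U_\psi C\) at the kernel level, equivalently \(\mathcal C^{(\psi)}=U_\psi\mathcal C U_\psi^*\) at the operator level. Therefore
\[
(C^{(\psi)})^G=\Pi_G^\ast U_\psi C=U_\psi\Pi_G C=U_\psi C^G,
\]
which is the stated identity once kernels are identified with their integral operators.

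For the estimator-level statement, write
\[
(\hat C^{(\psi)})^G-U_\psi\hat C^G=\Pi_G^\ast\bigl(\hat C^{(\psi)}-U_\psi\hat C\bigr).
\]
When \(\psi\) is affine, Theorem~\ref{thm:transport}(ii) makes the bracket vanish identically, so the two operations commute exactly. In general, \(\Pi_G^\ast\) is an orthogonal projection and so has norm at most one. The \(L^2\) bound of Theorem~\ref{thm:transport}(iii) therefore transfers unchanged to the symmetrized estimators.

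The main obstacle is bookkeeping of the measures. We must make sure that the Hilbert space on \(E'\) uses \(\mu^{(\psi)}\), not Lebesgue measure, so that \(U_\psi\) is genuinely unitary and \(\Pi_G^\ast\) is genuinely self-adjoint. The interior-compactness caveat in Theorem~\ref{thm:transport}(iii) must also carry through after averaging, since group images of interior compacts remain interior compacts when the action consists of isometries.
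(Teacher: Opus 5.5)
Your proposal is correct and follows the paper's proof essentially step for step. The same pointwise computation \(\psi(g\cdot\psi^{-1}(s'))=g\ast s'\) gives \(U_\psi\Pi_GU_\psi^*=\Pi_G^\ast\); Theorem~\ref{thm:transport}(i) then yields the covariance identity; and the estimator statement follows from Theorem~\ref{thm:transport}(ii)--(iii) because \(\Pi_G^\ast\) is a contraction, which your explicit identity \((\hat C^{(\psi)})^G-U_\psi\hat C^G=\Pi_G^\ast(\hat C^{(\psi)}-U_\psi\hat C)\) spells out. Your observation that \(\mu^{(\psi)}\)-preservation of \(\ast\) follows automatically from \(\mu\)-preservation of the original action is a correct small addition to what the paper assumes.
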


\begin{proof}
	For \(F\in L^2(E'\times E')\),
	\(U_\psi\Pi_GU_\psi^*F(s',t')
	=|G|^{-1}\sum_gF\bigl(\psi(g\cdot\psi^{-1}(s')),\psi(g\cdot\psi^{-1}(t'))\bigr)
	=|G|^{-1}\sum_gF(g\ast s',g\ast t')\), which is \(\Pi_G^\ast F\). Applying
	this to \(C^{(\psi)}\) and using Theorem~\ref{thm:transport}(i) gives the
	second identity. The estimator statement is immediate from
	Theorem~\ref{thm:transport}, \(\Pi_G\) being a contraction.
\end{proof}

\begin{remark}[A design choice, not a theorem]
	\label{rem:design-choice}
	Proposition~\ref{prop:conjugation} says that symmetry is a property of the
	problem and not of the parametrisation, provided the reparametrisation
	respects the symmetry. It also says what goes wrong otherwise: a
	reparametrisation that does not conjugate the action destroys the invariance
	of the target, and the projection \(\Pi_G\) then contracts towards the wrong
	surface, introducing a bias that \eqref{eq:contraction} no longer controls.
	In practice this is the binding constraint --- one may reparametrise, or one
	may exploit a symmetry, but a reparametrisation chosen for other reasons will
	usually forfeit the symmetry. Section~\ref{sec:x2} measures the price.
\end{remark}

\section{Consequences for eigenvalues, eigenspaces and FPCA}
\label{sec:spectral-propagation}

The covariance-surface bounds obtained above propagate to FPCA through standard
self-adjoint perturbation theory.  We record the consequences explicitly
because the relevant object in the presence of multiplicities is an
eigenprojection, not an arbitrarily chosen eigenfunction.

Let \(\mathcal C\) be the covariance operator induced by \(C\), and let
\(\widehat{\mathcal C}^{G}\) be the operator induced by \(\hat C_n^G\). Put
\[
\Delta_G:=\widehat{\mathcal C}^{G}-\mathcal C.
\]
Since an integral operator with square-integrable kernel is Hilbert--Schmidt,
\begin{equation}
\label{eq:op-hs}
\|\Delta_G\|_{\mathrm{op}}
\le
\|\Delta_G\|_{\mathrm{HS}}
=
\|\hat C_n^G-C\|_{L^2(E\times E)}.
\end{equation}

\begin{theorem}[Eigenvalue and eigenspace perturbation]
\label{thm:spectral-propagation}
Let
\[
\lambda_1\ge\lambda_2\ge\cdots\ge0
\]
be the eigenvalues of \(\mathcal C\), repeated according to multiplicity, and
let \(\hat\lambda_j^G\) be those of \(\widehat{\mathcal C}^{G}\).

\begin{enumerate}[label=\textup{(\roman*)},leftmargin=*]
\item For every \(j\),
\begin{equation}
\label{eq:eigenvalue-bound}
\boxed{
|\hat\lambda_j^G-\lambda_j|
\le
\|\Delta_G\|_{\mathrm{op}}
\le
\|\hat C_n^G-C\|_{L^2(E\times E)}.
}
\end{equation}

\item Let \(I\) be a finite spectral cluster of \(\mathcal C\), let \(P_I\)
be its spectral projection, and suppose its separation from the remainder of
the spectrum is
\[
\gamma_I
:=
\operatorname{dist}
\bigl(\{\lambda_j:j\in I\},
      \{\lambda_\ell:\ell\notin I\}\bigr)>0.
\]
Let \(\hat P_I^G\) denote the corresponding empirical spectral projection,
defined whenever \(\|\Delta_G\|_{\mathrm{op}}<\gamma_I/2\). Then
\begin{equation}
\label{eq:DK-projection}
\boxed{
\|\hat P_I^G-P_I\|_{\mathrm{op}}
\le
\frac{2\|\Delta_G\|_{\mathrm{op}}}{\gamma_I}
\le
\frac{2\|\hat C_n^G-C\|_{L^2}}{\gamma_I}.
}
\end{equation}

\item If \(\lambda_j\) is simple and
\[
\gamma_j=\min_{\ell\ne j}|\lambda_j-\lambda_\ell|>0,
\]
one may choose the sign of \(\hat\phi_j^G\) so that
\begin{equation}
\label{eq:eigenfunction-bound}
\boxed{
\|\hat\phi_j^G-\phi_j\|_{L^2}
\le
\frac{2\sqrt2}{\gamma_j}
\|\hat C_n^G-C\|_{L^2(E\times E)}
}
\end{equation}
whenever \(\|\Delta_G\|_{\mathrm{op}}<\gamma_j/2\).
\end{enumerate}
\end{theorem}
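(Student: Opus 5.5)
The plan is to treat \(\widehat{\mathcal C}^{G}\) as a self-adjoint compact perturbation of \(\mathcal C\) and apply standard operator perturbation theory, with \eqref{eq:op-hs} turning every operator-norm bound into the \(L^2\) surface bound. First, I would check self-adjointness. The raw products \(Z_{ijk}\) are symmetric in \((j,k)\), and the two-dimensional local-linear fit is symmetric under swapping coordinates, so \(\hat C_n(s,t)=\hat C_n(t,s)\). The diagonal \(G\)-action preserves this symmetry, hence so does \(\Pi_G\hat C_n\). If symmetry of the smoother is not taken for granted, I would replace \(\hat C_n^G\) by its symmetrisation \(\tfrac12(F+F^\top)\). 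This is again an orthogonal projection that commutes with \(\Pi_G\) and does not increase the distance to the symmetric \(C\). Either way, \(\widehat{\mathcal C}^G\) and \(\mathcal C\) are compact self-adjoint Hilbert--Schmidt operators.

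For (i), I would use the Courant--Fischer min--max characterisation. Since \(\widehat{\mathcal C}^G\) need not be nonnegative, I would order eigenvalues via the max--min over \(j\)-dimensional subspaces: \(\lambda_j=\sup_{\dim V=j}\inf_{x\in V,\|x\|=1}\langle\mathcal Cx,x\rangle\) for the nonnegative part of the spectrum. From \(|\langle\Delta_Gx,x\rangle|\le\|\Delta_G\|_{\mathrm{op}}\) one gets Weyl's inequality \(|\hat\lambda_j^G-\lambda_j|\le\|\Delta_G\|_{\mathrm{op}}\). The second inequality in \eqref{eq:eigenvalue-bound} is exactly \eqref{eq:op-hs}.

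For (ii), I would use the Riesz projection. Let \(\Gamma\) be a contour enclosing \(\{\lambda_j:j\in I\}\) at distance \(\gamma_I/2\) from the rest of the spectrum. When \(\|\Delta_G\|_{\mathrm{op}}<\gamma_I/2\), Weyl's inequality shows that exactly the perturbed cluster lies inside \(\Gamma\), which defines \(\hat P_I^G\). A contour estimate is awkward for an infinite cluster boundary, so for the sharp constant I would instead follow Davis--Kahan. Write \(\hat P_I^G-P_I=\hat P_I^G(I-P_I)-(I-\hat P_I^G)P_I\). Bound each piece by the Sylvester-equation argument: \(\hat P_I^G(I-P_I)\) solves \(\widehat{\mathcal C}^G X-X\mathcal C=\hat P_I^G\Delta_G(I-P_I)\), with spectral separation at least \(\gamma_I-\|\Delta_G\|_{\mathrm{op}}\ge\gamma_I/2\). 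This gives \(\|\hat P_I^G(I-P_I)\|\le\|\Delta_G\|/(\gamma_I/2)\). The main obstacle is the separation in the Sylvester bound. Davis--Kahan \(\sin\Theta\) needs the two spectral sets to be separated by an annulus or half-line, and a cluster in the middle of the spectrum only gives the weaker Bhatia--Davis--McIntosh constant \(\pi/2\). So I would first try the symmetric \(\sin\Theta\) route, comparing the perturbed cluster, which lies within \(\|\Delta_G\|\) of \(I\), with the unperturbed complement. Since \(\|\hat P-P\|_{\mathrm{op}}=\max\) of the two off-diagonal norms for orthogonal projections of equal rank, this yields \(\|\Delta_G\|/(\gamma_I-\|\Delta_G\|)\le2\|\Delta_G\|/\gamma_I\). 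If the interval separation fails, I would fall back on the contour integral, which still gives a constant-times-\(\|\Delta_G\|/\gamma_I\) bound.

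For (iii), I would apply (ii) with \(I=\{j\}\), which is valid when \(\|\Delta_G\|_{\mathrm{op}}<\gamma_j/2\), so that \(\hat\phi_j^G\) is well defined up to sign. Choose the sign so that \(\langle\hat\phi_j^G,\phi_j\rangle\ge0\). Then \(\|\hat\phi-\phi\|^2=2(1-\cos\theta)\le2\sin^2\theta\), and \(\sin\theta=\|\hat P-P\|_{\mathrm{op}}\) for rank-one projections. This gives \(\|\hat\phi_j^G-\phi_j\|\le\sqrt2\,\|\hat P-P\|\le2\sqrt2\|\Delta_G\|/\gamma_j\), and \eqref{eq:op-hs} finishes \eqref{eq:eigenfunction-bound}.
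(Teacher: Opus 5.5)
Your proposal is correct and follows the paper's route: Weyl's inequality via min--max for (i), Davis--Kahan $\sin\Theta$ for the cluster in (ii), and the sign choice with $\|\hat\phi_j^G-\phi_j\|\le\sqrt2\sin\theta$ for (iii); your symmetric application, with separation $\gamma_I-\|\Delta_G\|_{\mathrm{op}}\ge\gamma_I/2$, is exactly where the factor $2$ comes from. Your stated worry about a middle cluster is unfounded: a contiguous cluster lies in an interval whose $\gamma_I$-enlargement excludes the rest of the spectrum, which is precisely the Davis--Kahan (interval/annulus) hypothesis, and Bhatia--Davis--McIntosh is only needed for interlaced sets, so the contour fallback, which would not give the constant $2$ anyway, is never needed.
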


\begin{proof}
Part (i) is Weyl's inequality for compact self-adjoint operators followed by
\eqref{eq:op-hs}. Part (ii) is the Davis--Kahan sin-theta theorem applied to
the isolated spectral cluster. For a simple eigenvalue,
\[
\|\hat P_j^G-P_j\|_{\mathrm{op}}
=
\sin\angle(\hat\phi_j^G,\phi_j),
\]
and, after choosing the sign so that the inner product is nonnegative,
\[
\|\hat\phi_j^G-\phi_j\|_2
\le
\sqrt2\,\|\hat P_j^G-P_j\|_{\mathrm{op}}.
\]
Combining this with part (ii) proves \eqref{eq:eigenfunction-bound}.
\end{proof}

\begin{corollary}[Expected spectral error under symmetry]
\label{cor:expected-spectral-error}
Under exact \(G\)-invariance,
\[
\EE|\hat\lambda_j^G-\lambda_j|^2
\le
\EE\|\hat C_n^G-C\|_{L^2}^2.
\]
For a simple eigenvalue, define
\[
\mathcal E_{j,n}
=
\left\{
\|\widehat{\mathcal C}^{G}-\mathcal C\|_{\mathrm{op}}
<\gamma_j/2
\right\}.
\]
Then
\[
\EE\!\left[
\|\hat\phi_j^G-\phi_j\|_2^2\,
\mathbf 1_{\mathcal E_{j,n}}
\right]
\lesssim
\gamma_j^{-2}
\EE\|\hat C_n^G-C\|_{L^2}^2.
\]
If, in addition,
\(\PP(\mathcal E_{j,n}^c)\to0\), the same rate holds in probability for the
eigenfunction error; an unconditional mean-square rate follows under the usual
uniform-integrability condition. Hence, in the separated-orbit sparse regime of
Corollary~\ref{cor:sparse-gain},
\begin{equation}
\label{eq:fpca-rate}
\boxed{
\EE\|\hat\phi_j^G-\phi_j\|_2^2
\lesssim
\frac1{\gamma_j^2}
\left\{
\frac1n+
\frac1{|G|}
\left[
\frac1{nmh}+\frac1{n\nu_2h^2}
\right]
\right\},
}
\end{equation}
up to the squared smoothing bias and model-dependent constants.
The same stochastic scale, without the factor \(\gamma_j^{-2}\), controls the
squared eigenvalue error.
\end{corollary}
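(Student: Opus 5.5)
The plan is to derive every claim of Corollary~\ref{cor:expected-spectral-error} from the deterministic bounds of Theorem~\ref{thm:spectral-propagation}. We square them and take expectations, and we restrict to the event \(\mathcal E_{j,n}\) wherever Davis--Kahan needs a gap condition.

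\textbf{Eigenvalue bound.} Theorem~\ref{thm:spectral-propagation}(i) holds pathwise with no gap condition:
\[
|\hat\lambda_j^G-\lambda_j|\le\|\hat C_n^G-C\|_{L^2}.
\]
Squaring and taking expectations gives \(\EE|\hat\lambda_j^G-\lambda_j|^2\le\EE\|\hat C_n^G-C\|_{L^2}^2\). Exact invariance is used only later, when Theorem~\ref{thm:projection-risk} and Corollary~\ref{cor:sparse-gain} control the right-hand side.

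\textbf{Eigenfunction bound on \(\mathcal E_{j,n}\).} On \(\mathcal E_{j,n}\) the hypothesis of Theorem~\ref{thm:spectral-propagation}(iii) holds. With the sign chosen there,
\[
\|\hat\phi_j^G-\phi_j\|_2^2\,\mathbf 1_{\mathcal E_{j,n}}\le 8\gamma_j^{-2}\|\hat C_n^G-C\|_{L^2}^2.
\]
Taking expectations gives the indicator-weighted bound with constant \(8\).

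\textbf{In probability and unconditionally.} If \(\PP(\mathcal E_{j,n}^c)\to0\), write, for any \(M>0\),
\[
\PP\bigl(\|\hat\phi_j^G-\phi_j\|_2^2>M r_n\bigr)\le\PP\bigl(\|\hat\phi_j^G-\phi_j\|_2^2\mathbf 1_{\mathcal E_{j,n}}>M r_n\bigr)+\PP(\mathcal E_{j,n}^c),
\]
where \(r_n=\gamma_j^{-2}\EE\|\hat C_n^G-C\|^2\). Markov's inequality bounds the first term by \(8/M\), and the second term vanishes. For the unconditional mean-square statement we split \(\EE=\EE[\cdot\,\mathbf 1_{\mathcal E}]+\EE[\cdot\,\mathbf 1_{\mathcal E^c}]\). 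Both eigenfunctions have unit norm, so \(\|\hat\phi_j^G-\phi_j\|_2^2\le4\), and the second term is at most \(4\PP(\mathcal E_{j,n}^c)\).

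This is the main obstacle. The term \(4\PP(\mathcal E^c_{j,n})\) must be \(O(r_n)\), which requires a Markov/Chebyshev bound on \(\|\Delta_G\|_{\mathrm{op}}\) or the uniform-integrability condition. The boundedness by \(4\) makes a Chebyshev step work:
\[
4\PP(\mathcal E^c_{j,n})\le 16\gamma_j^{-2}\EE\|\Delta_G\|^2_{\mathrm{op}}\le 16\gamma_j^{-2}\EE\|\hat C_n^G-C\|_{L^2}^2.
\]
So I would actually derive the unconditional bound directly, with constant \(24\), and mention uniform integrability only as the paper's framing.

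\textbf{The rate \eqref{eq:fpca-rate}.} Split \(\EE\|\hat C_n^G-C\|^2\) into a squared bias part and a variance part, the bias being explicitly excluded in the statement. Theorem~\ref{thm:projection-risk} with \(A_G(C)=0\) gives \(\EE\|\hat C^G_n-C\|^2=\EE\|P(\hat C_n-C)\|^2\). Under the separated-orbit regime, Corollary~\ref{cor:sparse-gain} bounds the stochastic part by
\[
n^{-1}+|G|^{-1}\bigl[(nmh)^{-1}+(n\nu_2h^2)^{-1}\bigr].
\]
Inserting this into the eigenfunction bound gives \eqref{eq:fpca-rate}, and inserting it into the eigenvalue bound gives the same scale without the factor \(\gamma_j^{-2}\).
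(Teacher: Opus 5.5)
Your proposal is correct and follows the route the paper intends. The paper gives no separate proof of this corollary. It is obtained by squaring the pathwise bounds of Theorem~\ref{thm:spectral-propagation}, taking expectations (on $\mathcal E_{j,n}$ for the eigenfunction), and inserting Corollary~\ref{cor:sparse-gain}, which is exactly your structure.

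The one genuine departure is the unconditional mean-square statement. The paper appeals to an unspecified uniform-integrability condition. You instead use $\|\hat\phi_j^G-\phi_j\|_2^2\le 4$ and bound $\PP(\mathcal E_{j,n}^c)$ by Chebyshev. This gives $\EE\|\hat\phi_j^G-\phi_j\|_2^2\le 24\,\gamma_j^{-2}\EE\|\hat C_n^G-C\|_{L^2}^2$ with no extra hypothesis. That is valid and strictly stronger than what the paper asserts.

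It also makes the hypothesis $\PP(\mathcal E_{j,n}^c)\to0$ unnecessary for the in-probability claim. Markov's inequality applied to your unconditional bound already yields $O_P(r_n)$.

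You can even drop the Chebyshev step. Once the sign is aligned, $\|\hat\phi_j^G-\phi_j\|_2^2\le 2$. On $\mathcal E_{j,n}^c$ you have $8\gamma_j^{-2}\|\hat C_n^G-C\|_{L^2}^2\ge 8\gamma_j^{-2}\|\Delta_G\|_{\mathrm{op}}^2\ge 2$. Hence the bound of Theorem~\ref{thm:spectral-propagation}(iii) holds pathwise on the whole probability space with the same constant $8$; this is the Yu--Wang--Samworth form of Davis--Kahan. The only caveat is that $\hat\phi_j^G$ must be fixed as some unit-norm eigenfunction on the event where the empirical eigenvalue is not simple.

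The paper's formulation has one advantage: it stays within the literal gap hypothesis of Theorem~\ref{thm:spectral-propagation}(iii). Your version removes both the uniform-integrability and the $\PP(\mathcal E_{j,n}^c)\to0$ assumptions at no cost.
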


\begin{remark}[Multiplicity and representation structure]
\label{rem:multiplicity}
If an eigenvalue has multiplicity greater than one, individual eigenfunctions
are not identifiable and no bound such as \eqref{eq:eigenfunction-bound} is
intrinsic.  The correct target is the spectral projector \(P_I\), controlled
by \eqref{eq:DK-projection}.  Moreover, because \(C\in\PG\) implies that
\(\mathcal C\) commutes with the unitary representation of \(G\) on
\(L^2(E,\mu)\), every population eigenspace is \(G\)-stable.  The same holds
for \(\widehat{\mathcal C}^{G}\).  Thus one may choose bases adapted to the
irreducible representation blocks, but the statistical statement should be
made at projector level whenever multiplicity is present.
\end{remark}

\begin{corollary}[Transported spectral structure]
\label{cor:transported-spectrum}
Under the measure convention of Theorem~\ref{thm:transport}(i), population
transport is unitary:
\[
\mathcal C^{(\psi)}=U_\psi\mathcal C U_\psi^*.
\]
Therefore the population eigenvalues are exactly preserved and the spectral
projectors satisfy
\[
P_I^{(\psi)}=U_\psi P_IU_\psi^*.
\]
For an affine \(\psi\), the same identities hold exactly for the estimated
operators. For a general transformation covered by
Theorem~\ref{thm:transport}(iii),
\[
\|\widehat{\mathcal C}^{(\psi)}
-U_\psi\widehat{\mathcal C}U_\psi^*\|_{\mathrm{op}}
=
O_P\!\left(
\kappa_\psi[
h^2+h\nloc^{-1/2}
+h_0^2+h_0n_{\mathrm{loc},1}^{-1/2}]
\right),
\]
and hence, for an isolated spectral cluster with gap \(\gamma_I\),
\begin{equation}
\label{eq:transported-projector-defect}
\boxed{
\|\hat P_I^{(\psi)}
-U_\psi\hat P_IU_\psi^*\|_{\mathrm{op}}
=
O_P\!\left(
\frac{\kappa_\psi}{\gamma_I}
[h^2+h\nloc^{-1/2}
+h_0^2+h_0n_{\mathrm{loc},1}^{-1/2}]
\right).
}
\end{equation}
\end{corollary}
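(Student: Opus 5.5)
The corollary splits into three claims, and I will handle them in order: the population identities, the affine case, and the non-affine perturbation bounds.

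\textbf{Population identities.} The identity \(\mathcal C^{(\psi)}=U_\psi\mathcal C U_\psi^*\) is exactly Theorem~\ref{thm:transport}(i), so I would cite it. Since \(U_\psi\) is unitary from \(L^2(E,\mu)\) onto \(L^2(E',\mu^{(\psi)})\), conjugation preserves the spectrum together with multiplicities. To transport the spectral projectors, I would use the functional-calculus (Riesz) representation
\[
P_I=\frac{1}{2\pi i}\oint_\Gamma(z-\mathcal C)^{-1}\,dz,
\]
where \(\Gamma\) is a contour enclosing only the cluster \(\{\lambda_j:j\in I\}\). Because \((z-U\mathcal C U^*)^{-1}=U(z-\mathcal C)^{-1}U^*\), it follows that \(P_I^{(\psi)}=U_\psi P_IU_\psi^*\).

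\textbf{Affine case.} When \(\psi\) is affine, Theorem~\ref{thm:transport}(ii) gives \(\hat C^{(\psi)}_{h'}=U_\psi\hat C_{n,h}U_\psi^*\) exactly. The same resolvent argument then transfers this identity to the eigenvalues and eigenprojections of the estimated operators, with no further work.

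\textbf{Non-affine case: operator-norm bound.} I first bound the operator norm by the Hilbert--Schmidt norm as in \eqref{eq:op-hs}. The Hilbert--Schmidt norm equals the \(L^2(E'\times E')\) norm of the kernel difference, and Theorem~\ref{thm:transport}(iii) controls that kernel. The theorem states the \(L^2\) result without the mean terms, but the pointwise bound in (iii) carries the \(h_0\) terms, and per the discussion after Lemma~\ref{lem:mean} they add in. I will keep them explicitly to match the statement. Passing from the uniform pointwise bound on compact interior subsets to the \(L^2\) bound requires integrating over a domain whose Jacobian factor \(|\psi'|\) is bounded above and below. A boundary strip remains; I would absorb it the same way the proof of Theorem~\ref{thm:transport}(iii) does, and not re-derive it.

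\textbf{Non-affine case: projector bound.} Set \(\hat{\mathcal B}:=U_\psi\widehat{\mathcal C}U_\psi^*\). By the unitary argument, the spectral projector of \(\hat{\mathcal B}\) for the cluster is \(U_\psi\hat P_IU_\psi^*\). I then compare \(\widehat{\mathcal C}^{(\psi)}\) with \(\hat{\mathcal B}\), using Davis--Kahan as in Theorem~\ref{thm:spectral-propagation}(ii) or the resolvent difference
\[
R_1(z)-R_2(z)=R_1(z)(A_1-A_2)R_2(z)
\]
integrated over \(\Gamma\). This requires an empirical gap. Work on the event where \(\|\widehat{\mathcal C}-\mathcal C\|_{\mathrm{op}}\) and the transport defect are both below \(\gamma_I/4\); this event has probability tending to one under consistency and the \(O_P\) bound. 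On it, Weyl's inequality shows that \(\hat{\mathcal B}\), being unitarily equivalent to \(\widehat{\mathcal C}\), has a cluster gap at least \(\gamma_I/2\). Choose \(\Gamma\) at distance of order \(\gamma_I\) from the spectra of both operators, with length of order \(\gamma_I\) plus the cluster width; for a finite cluster this is bounded by a constant multiple of \(\gamma_I\), or else I use Davis--Kahan directly. Then both resolvents on \(\Gamma\) are bounded by \(4/\gamma_I\), which gives
\[
\|\hat P_I^{(\psi)}-U_\psi\hat P_IU_\psi^*\|_{\mathrm{op}}\lesssim\gamma_I^{-1}\,\|\widehat{\mathcal C}^{(\psi)}-\hat{\mathcal B}\|_{\mathrm{op}}.
\]
Substituting the operator-norm bound yields \eqref{eq:transported-projector-defect}.

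The main obstacle is this last step: the gap condition must be stated for the empirical operator rather than the population one. I would use the Davis--Kahan form with empirical gap \(\hat\gamma_I\ge\gamma_I-2\|\widehat{\mathcal C}-\mathcal C\|_{\mathrm{op}}\), which requires the estimator's consistency in operator norm, on an event whose probability tends to one.
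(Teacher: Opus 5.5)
Your proposal is correct and follows the paper's route. You use unitary conjugation for the exact population and affine statements, making it explicit via the Riesz projection. For the non-affine case you use $\|\cdot\|_{\mathrm{op}}\le\|\cdot\|_{\mathrm{HS}}$ with Theorem~\ref{thm:transport}(iii), keeping the $h_0$ terms, and then Davis--Kahan applied to $\widehat{\mathcal C}^{(\psi)}$ versus $U_\psi\widehat{\mathcal C}U_\psi^*$ on an empirical-gap event secured by Weyl's inequality and consistency.

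One small slip: a contour around a finite cluster need not have length $O(\gamma_I)$, because the cluster's width $w_I$ can exceed $\gamma_I$ arbitrarily. The Riesz-integral version therefore yields a factor $(w_I+\gamma_I)\gamma_I^{-2}$ rather than $\gamma_I^{-1}$. It is your Davis--Kahan fallback, which is the paper's choice, that actually delivers \eqref{eq:transported-projector-defect}.
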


\begin{proof}
The exact statements follow from unitary conjugation.  For the non-affine
estimator, use the operator bound
\(\|\cdot\|_{\mathrm{op}}\le\|\cdot\|_{\mathrm{HS}}\) and apply
Davis--Kahan to the two estimated operators, on the event that the empirical
spectral cluster remains separated.  Weyl's inequality and consistency ensure
this event has probability tending to one when the population gap is fixed.
\end{proof}

\begin{remark}[What symmetry improves in FPCA]
The group projection does not change the spectral perturbation mechanism:
eigenvalues remain Lipschitz in operator norm and eigenspaces remain inversely
proportional to the spectral gap.  What symmetry changes is the size of the
covariance-estimation error entering those inequalities.  In the
separated-orbit regime it divides the removable local stochastic component by
\(|G|\), and this gain is inherited by eigenvalues and, after division by
\(\gamma_I^2\), by eigenspace mean-squared error.
\end{remark}

\section{Consequences for truncated PACE prediction}
\label{sec:prediction}

In the sparse regime individual curves are not observable and scores are
predicted by conditional expectation \cite{YaoMuellerWang2005}. Writing
\(\xi_{ik}=\langle X_i-\mathfrak m,\phi_k\rangle\), the PACE predictor is
\begin{equation}
	\label{eq:pace}
	\hat\xi_{ik}
	=\hat\lambda_k\,\hat\phi_{ik}^{\top}\,\hat\Sigma_{Y_i}^{-1}
	\bigl(Y_i-\hat{\mathfrak m}_i\bigr),
\end{equation}
with \(\hat\phi_{ik}=(\hat\phi_k(T_{i1}),\dots,\hat\phi_k(T_{iN_i}))^\top\) and
\(\hat\Sigma_{Y_i}=\{\hat C(T_{ij},T_{ik})\}_{j,k}+\hat\sigma^2 I\) the estimated
covariance matrix of the observation vector; under Gaussian assumptions this is
the best linear unbiased predictor. The predicted curve is
\(\hat X_i=\hat{\mathfrak m}+\sum_{k\le K}\hat\xi_{ik}\hat\phi_k\).

\begin{definition}[Twin and equivariant predictors]
	\label{def:predictors}
	The \emph{twin predictor} is obtained by running \eqref{eq:pace} on the
	transported data with the eigenstructure of \(\hat C^{(\psi)}_{h'}\); the
	\emph{equivariant predictor} by running it with the eigenstructure of
	\(\hat C_n^G\).
\end{definition}

\begin{assumption}[Evaluation stability for truncated FPCA]
\label{ass:evaluation-stability}
For the fixed truncation level \(K\), the retained population eigenspaces admit
continuous representatives and the empirical spectral subspaces satisfy, on
the relevant high-probability event,
\[
\max_{1\le j\le K}
\|\hat\phi_j^G-\phi_j\|_\infty
\le
C_{\mathrm{ev},K}
\|\widehat{\mathcal C}^{G}-\mathcal C\|_{\mathrm{op}}
\]
in the simple-eigenvalue case, with the analogous basis-free evaluation bound
for multiple eigenspaces.  The constant may depend on \(K\) and the population
spectral gaps.
\end{assumption}

\begin{remark}
Davis--Kahan alone gives an \(L^2\) eigenspace bound and does not control
point evaluations \(\hat\phi_j(T_{il})\).  Assumption~\ref{ass:evaluation-stability}
is therefore genuinely additional for PACE prediction.  It can be verified
under stronger smooth-kernel/eigenfunction regularity, but is kept explicit
here to avoid using an \(L^2\) perturbation theorem as a pointwise one.
\end{remark}

\begin{theorem}[Propagation to truncated PACE prediction]
	\label{thm:prediction-propagation}
	Fix \(K<\infty\), assume Assumption~\ref{ass:evaluation-stability}, and
	suppose that the first \(K\) population eigenvalues form isolated spectral
	clusters with minimal relevant gap
	\(\gamma_K>0\). Assume also that the observation covariance matrices satisfy
	\[
	\lambda_{\min}(\Sigma_{Y_i})\ge\sigma_0^2>0
	\]
	and, with probability tending to one,
	\(\lambda_{\min}(\hat\Sigma_{Y_i})\ge\sigma_0^2/2\).
	Then the following statements hold.

	\begin{enumerate}[label=\textup{(\roman*)},leftmargin=*]
	\item \emph{Affine transport.} If \(\psi\) is affine, the twin predictor
	and the classical predictor are exactly conjugate:
	\[
	\boxed{
	\hat X_{i,K}^{(\psi)}\circ\psi=\hat X_{i,K}.
	}
	\]

	\item \emph{Non-affine transport.} Under
	Theorem~\ref{thm:transport}(iii), for fixed \(K\),
	\[
	\|\hat X_{i,K}^{(\psi)}\circ\psi-\hat X_{i,K}\|_{L^2}
	=
	O_P\!\left[
		\left(1+\gamma_K^{-1}\right)
		\kappa_\psi
		\{h^2+h\nloc^{-1/2}
		+h_0^2+h_0n_{\mathrm{loc},1}^{-1/2}\}
		+
		\delta_{\sigma^2}
	\right],
	\]
	where \(\delta_{\mathfrak m}\) and \(\delta_{\sigma^2}\) denote the
	corresponding mean- and noise-variance transport defects.  Under affine
	transport both are zero.

	\item \emph{Equivariant FPCA/PACE.} If the process and design are
	\(G\)-invariant, the covariance estimator
	\(\hat C_n^G\) commutes with the \(G\)-representation. Consequently its
	spectral projectors are \(G\)-equivariant. The truncated PACE predictor
	constructed from these projectors is therefore equivariant under relabelling
	of the observation times by \(G\). For simple eigenvalues this may be written
	using suitably signed eigenfunctions; for multiple eigenvalues the
	projector formulation is the intrinsic one.
	\end{enumerate}
\end{theorem}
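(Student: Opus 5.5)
The plan is to treat the PACE predictor as a smooth functional of the ingredients $(\hat{\mathfrak m},\hat C,\hat\sigma^2)$ evaluated at the observation times, and to transport each ingredient separately.

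\emph{Part (i).} Let $\psi(t)=ct+d$. Lemma~\ref{lem:mean} gives $\hat{\mathfrak m}^{(\psi)}\circ\psi=\hat{\mathfrak m}$. Theorem~\ref{thm:transport}(ii) gives $\hat C^{(\psi)}(\psi(s),\psi(t))=\hat C(s,t)$, so the transported operator equals $U_\psi\widehat{\mathcal C}U_\psi^*$. Corollary~\ref{cor:transported-spectrum} then gives equal eigenvalues and $\hat\phi_k^{(\psi)}=U_\psi\hat\phi_k$. We need to fix normalisations so that $U_\psi$ is the composition $\hat\phi\circ\psi^{-1}$ under the push-forward measure, and use the common-sign convention. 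From this, $\hat\phi_k^{(\psi)}(\psi(T_{ij}))=\hat\phi_k(T_{ij})$ and the matrices $\hat\Sigma_{Y_i}$ coincide entrywise. For $\hat\sigma^2$ we will assume it is built from the transported diagonal smoother and covariance surface; the same affine argument makes it invariant. Hence every factor in \eqref{eq:pace} is unchanged, so $\hat\xi_{ik}^{(\psi)}=\hat\xi_{ik}$. Composing $\hat X^{(\psi)}_{i,K}=\hat{\mathfrak m}^{(\psi)}+\sum_k\hat\xi^{(\psi)}_{ik}\hat\phi^{(\psi)}_k$ with $\psi$ yields the identity. If an eigenvalue is multiple, we replace the sum over $k$ by $\sum_I\hat P_I$-terms; these are basis-free.

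\emph{Part (ii).} We compare the two predictors term by term through a telescoping decomposition. The mean term contributes $\delta_{\mathfrak m}$, which is bounded by Lemma~\ref{lem:mean}. For the score term, write
\[
\hat\xi_{ik}=\hat\lambda_k\hat\phi_{ik}^\top\hat\Sigma_{Y_i}^{-1}(Y_i-\hat{\mathfrak m}_i)
\]
and perturb each of the four factors:
\begin{itemize}
\item[] $\hat\lambda_k$, by Weyl plus the operator bound of Corollary~\ref{cor:transported-spectrum};
\item[] the evaluated eigenvectors, by Assumption~\ref{ass:evaluation-stability} applied to the pair $(\widehat{\mathcal C}^{(\psi)},U_\psi\widehat{\mathcal C}U_\psi^*)$, with Davis--Kahan giving the factor $\gamma_K^{-1}$;
\item[] $\hat\Sigma_{Y_i}^{-1}$, via the resolvent identity $A^{-1}-B^{-1}=-A^{-1}(A-B)B^{-1}$ together with the eigenvalue floor $\sigma_0^2/2$ (the entrywise difference of $\hat C$ at finitely many points is controlled by the uniform-on-compacts statement of Theorem~\ref{thm:transport}(iii), plus $\delta_{\sigma^2}$ on the diagonal);
\item[] the centred response, through the mean defect.
\end{itemize}
Each factor is $O_P(1)$, because $N_i$ is finite with bounded fourth moments and the unperturbed quantities are consistent. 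Products of bounded factors and one small difference then give the stated order. The eigenfunction term in $L^2$ adds another $\gamma_K^{-1}$ contribution via \eqref{eq:transported-projector-defect}. Since $K$ is fixed, summing over $k\le K$ only changes constants.

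\emph{Part (iii).} By Proposition~\ref{prop:projection}, $\hat C^G_n$ is $G$-invariant, so $\widehat{\mathcal C}^G$ commutes with the unitary representation $(V_gf)(t)=f(g\cdot t)$. Its spectral projectors, being functions of the operator, commute with each $V_g$ as well. For the relabelled observation times $g\cdot T_{ij}$, the matrices $\hat\Sigma_{Y_i}$ are unchanged, since $\hat C^G$ is invariant and $\hat\sigma^2$ is unaffected. The projector-form predictor $\sum_I\hat P_I$ (expressed through its kernel) then transforms by $V_g$, which is the claimed equivariance. This also requires that $\hat{\mathfrak m}$ be $G$-invariant, and I would symmetrise it in the same way.

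The main obstacle is part (ii), specifically passing from $L^2$/operator closeness to point evaluations at the random $T_{ij}$ without a maximum-over-pairs bound. The first thing I would try is to rely on Assumption~\ref{ass:evaluation-stability} together with the interior-uniform pointwise bound of Theorem~\ref{thm:transport}(iii), restricting to the event that the times lie in a compact interior set, whose probability tends to one. I would also state the result per fixed subject $i$, which is all that the $O_P$ claim requires.
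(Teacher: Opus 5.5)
Your proposal is correct and follows the paper's proof essentially step for step. For (i) it transports $\hat{\mathfrak m}$, $\hat C$, the eigenstructure and $\hat\sigma^2$ exactly. For (ii) it perturbs the PACE formula factor by factor through Weyl, Davis--Kahan (Corollary~\ref{cor:transported-spectrum}), the resolvent identity and the eigenvalue floor. For (iii) it uses commutation of $\widehat{\mathcal C}^G$, and hence of its spectral projectors, with the $G$-representation.

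Your extra caveats make explicit what the paper leaves implicit. You invoke evaluation stability for the pair $(\widehat{\mathcal C}^{(\psi)},U_\psi\widehat{\mathcal C}U_\psi^*)$, whereas Assumption~\ref{ass:evaluation-stability} is literally stated only for $\hat\phi^G_j$ versus $\phi_j$, and you require $\hat{\mathfrak m}$ to be symmetrised in (iii).

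One small fix: on an interval domain, the probability that $T_{ij}$ lies in a fixed compact interior set does not tend to one. For the $O_P$ claim you should instead choose that set depending on the tolerance $\epsilon$.
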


\begin{proof}
For (i), Theorem~\ref{thm:transport}(ii) gives exact unitary conjugation of the
estimated covariance operators, so eigenvalues and spectral projectors are
transported exactly. The observation values are unchanged and the estimated
noise variance and mean are transported exactly. Hence every term in the PACE
conditional-mean formula is unchanged after pullback.

For (ii), write the score predictor as a smooth finite-dimensional functional
of \(\lambda_k\), the evaluations of the retained eigenspaces, the mean, and
\(\Sigma_{Y_i}^{-1}\).  The resolvent identity gives
\[
\hat\Sigma_{Y_i}^{-1}-\Sigma_{Y_i}^{-1}
=
-\Sigma_{Y_i}^{-1}
(\hat\Sigma_{Y_i}-\Sigma_{Y_i})
\hat\Sigma_{Y_i}^{-1},
\]
so the inverse is locally Lipschitz under the stated eigenvalue lower bound.
Corollary~\ref{cor:transported-spectrum} controls the eigenspace perturbation
by the covariance transport defect divided by \(\gamma_K\). Combining these
bounds over fixed \(K\) yields the displayed order.

For (iii), Proposition~\ref{prop:projection} implies that
\(\widehat{\mathcal C}^{G}\) commutes with the group representation. Functional
calculus then implies the same for each spectral projector. The PACE
conditional-mean formula is unchanged by simultaneous relabelling of the
observation times and conjugation of the covariance matrices, which proves
equivariance.  No arbitrary basis choice is required when the statement is
made in terms of spectral projectors.
\end{proof}

\begin{corollary}[Prediction inherits covariance improvement]
\label{cor:prediction-risk}
Under the assumptions of Theorem~\ref{thm:prediction-propagation}, for fixed
\(K\) and exact \(G\)-invariance, the component of the prediction error due to
covariance/eigenspace estimation is bounded, up to constants depending on
\(K\), \(\sigma_0^{-1}\), the retained eigenvalues and \(\gamma_K^{-1}\), by
\[
\EE\|\hat C_n^G-C\|_{L^2}^2.
\]
Hence, in the separated-orbit sparse regime, its stochastic contribution is
at most of order
\[
\boxed{
\frac1n+
\frac1{|G|}
\left[
\frac1{nmh}+\frac1{n\nu_2h^2}
\right].
}
\]
This statement concerns the estimation component of prediction risk; the
irreducible conditional prediction variance for a sparsely observed new curve
is not removed by group projection.
\end{corollary}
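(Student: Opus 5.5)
The argument is a perturbation argument for the finite-dimensional PACE map, conditioned on a high-probability good event. First I will isolate the component of prediction error that comes from covariance estimation. Let \(\tilde X_{i,K}\) be the oracle truncated predictor. It is built from the true \(\lambda_k\), the true retained eigenprojections, the true \(\Sigma_{Y_i}\), and the same mean and noise-variance estimates. Then \(\hat X^G_{i,K}-\tilde X_{i,K}\) is the covariance/eigenspace component, and it is what must be bounded.

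Let \(\Delta_G=\widehat{\mathcal C}^G-\mathcal C\). I will work on the event
\[
\mathcal E_n=\{\|\Delta_G\|_{\mathrm{op}}<\gamma_K/2\}\cap\{\lambda_{\min}(\hat\Sigma_{Y_i})\ge\sigma_0^2/2\},
\]
intersected with the evaluation-stability event of Assumption~\ref{ass:evaluation-stability}. On this event I bound the perturbation of each ingredient linearly in \(\|\Delta_G\|_{\mathrm{op}}\):
\begin{itemize}
\item \(|\hat\lambda_k^G-\lambda_k|\le\|\Delta_G\|_{\mathrm{op}}\) by Theorem~\ref{thm:spectral-propagation}(i).
\item The \(L^2\) eigenspace error is at most \(2\sqrt2\,\gamma_K^{-1}\|\Delta_G\|_{\mathrm{op}}\) by Theorem~\ref{thm:spectral-propagation}(ii)--(iii).
\item The sup-norm error \(\|\hat\phi_k^G-\phi_k\|_\infty\) is at most \(C_{\mathrm{ev},K}\|\Delta_G\|_{\mathrm{op}}\) by Assumption~\ref{ass:evaluation-stability}. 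This gives a bound of order \(\sqrt{N_i}\,C_{\mathrm{ev},K}\|\Delta_G\|_{\mathrm{op}}\) on the evaluation vectors \(\hat\phi_{ik}\).
\item The perturbation of \(\hat\Sigma_{Y_i}\) needs the entries \(\hat C^G(T_{ij},T_{il})-C(T_{ij},T_{il})\). I plan to write \(C=\sum_k\lambda_k\phi_k\otimes\phi_k\) and expand. The finite-rank part is then controlled by the previous two items, and the tail is treated as a fixed, unperturbed contribution. If needed, I will use the truncated working covariance \(\{\sum_{k\le K}\lambda_k\phi_k(T_{ij})\phi_k(T_{il})\}+\sigma^2I\), which is what truncated PACE actually uses.
\item The resolvent identity from the proof of Theorem~\ref{thm:prediction-propagation}(ii), with the two eigenvalue floors, gives \(\|\hat\Sigma_{Y_i}^{-1}-\Sigma_{Y_i}^{-1}\|\le 2\sigma_0^{-4}\|\hat\Sigma_{Y_i}-\Sigma_{Y_i}\|\).
\end{itemize}

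The predictor is a finite sum over \(k\le K\) of products of these ingredients. A telescoping product expansion, plus boundedness of the unperturbed factors, then yields
\[
\|\hat X^G_{i,K}-\tilde X_{i,K}\|_{L^2}\le L\,\|\Delta_G\|_{\mathrm{op}}\bigl(1+\|Y_i-\hat{\mathfrak m}_i\|\bigr)\quad\text{on }\mathcal E_n.
\]
The unperturbed factors are bounded by \(\lambda_1\), \(\sigma_0^{-2}\), \(\|\phi_k\|_\infty\) and \(\|Y_i-\hat{\mathfrak m}_i\|\). The constant \(L\) depends only on \(K\), \(\sigma_0^{-1}\), the retained eigenvalues, \(\gamma_K^{-1}\) and \(C_{\mathrm{ev},K}\). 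For eigenspaces with multiplicity I write the sum as \(\sum_I\) of projector kernels \(P_I(\cdot,T_{ij})\), which are basis-free, and the same argument applies.

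Squaring and taking expectations needs two further facts. The first is \(\|\Delta_G\|_{\mathrm{op}}\le\|\hat C^G_n-C\|_{L^2}\) from \eqref{eq:op-hs}. The second is the fourth-moment assumptions (F2)--(F3): they let me control \(\EE[(1+\|Y_i-\hat{\mathfrak m}_i\|)^2\mid\text{design}]\) by a constant, using independence of the new curve (or Cauchy--Schwarz for an in-sample one). This gives the bound \(\lesssim\EE\|\hat C_n^G-C\|_{L^2}^2\) on \(\mathcal E_n\). Off \(\mathcal E_n\), the statement is asserted in probability, exactly as in Corollary~\ref{cor:expected-spectral-error}. The boxed rate then follows by substituting Corollary~\ref{cor:sparse-gain}, whose bound is the same as \eqref{eq:fpca-rate} without \(\gamma_j^{-2}\).

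The main obstacle is the perturbation of \(\hat\Sigma_{Y_i}\) and of the point evaluations. An \(L^2\) bound on \(\hat C^G-C\) does not control values at the design points \(T_{ij}\). This is exactly why I route everything through the truncated spectral representation and Assumption~\ref{ass:evaluation-stability}, instead of using entrywise covariance errors. I expect to need a clarifying convention that the truncated PACE uses the rank-\(K\) plus \(\sigma^2 I\) covariance. If the full \(\hat C^G\) is used in \(\hat\Sigma_{Y_i}\), I would instead invoke a sup-norm covariance bound, which is stronger and not supplied by the paper.
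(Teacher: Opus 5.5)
This is essentially the paper's route. The corollary has no separate proof there; it is obtained by rerunning the Lipschitz perturbation argument for the finite-dimensional PACE map from the proof of Theorem~\ref{thm:prediction-propagation}(ii) (Weyl, Davis--Kahan, Assumption~\ref{ass:evaluation-stability}, the resolvent identity for \(\hat\Sigma_{Y_i}^{-1}\)) with \(\hat C_n^G\) in place of the transported estimator, and then inserting the rate of Corollary~\ref{cor:sparse-gain}.

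Your version is more explicit than the paper's sketch, which passes over how an \(L^2\) covariance bound controls the entries of \(\hat\Sigma_{Y_i}\). Your rank-\(K\) working-covariance convention resolves this, with two provisos. First, it must be used in both the estimated and the oracle predictor; treating the tail as unperturbed is not justified, since the tail of \(\hat C_n^G\) differs from that of \(C\) and its point values are uncontrolled. Second, you should work with an independent new curve, because Cauchy--Schwarz for an in-sample curve gives \((\EE\|\hat C_n^G-C\|_{L^2}^4)^{1/2}\) rather than \(\EE\|\hat C_n^G-C\|_{L^2}^2\).
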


\begin{remark}[Contingency]
	\label{rem:contingency}
	Both constructions are contingent on a hypothesis about the world, not about
	the data: the twin predictor is useful only if the transported timescale is
	the one on which the covariance structure is simple, and the equivariant
	predictor only if the process really is \(G\)-invariant. Neither hypothesis is
	verifiable from a sparse sample alone with any precision, since testing
	\(G\)-invariance of \(C\) is itself a two-dimensional smoothing problem. This
	is a limitation of the framework and not of its analysis.
\end{remark}

\section{Numerical illustrations and their interpretation}
\label{sec:simulations}

\subsection{Where the framework applies}
\label{sec:applications}

\paragraph{Longitudinal biomarkers.}
Patient markers --- CD4 counts, PSA levels, fasting glucose --- are recorded at
a handful of irregular clinic visits, the prototypical sparse setting. Two of
the constructions above apply, and they apply to different markers. Cortisol and
melatonin have a genuine circadian structure, and the domain is then a circle on
which \(\mathbb Z/k\) acts by rotation: the equivariant estimator is appropriate when the covariance is genuinely
invariant, and Theorem~\ref{thm:projection-risk} then guarantees that projection
cannot increase squared \(L^2\) error.
Markers with exponential decay have no symmetry at all but a natural
reparametrisation, \(\psi=\log\): the twin estimator is the relevant tool, and
Theorem~\ref{thm:transport} says what it costs, namely nothing at all if the
timescale change is affine. Applying the equivariant machinery to a marker
without symmetry would introduce bias, not merely fail to help ---
Section~\ref{sec:x2} quantifies how much.

\paragraph{Growth curves.}
In the Berkeley and Zurich growth studies the timing of the pubertal spurt
varies across children, so that chronological age is not the natural
parametrisation. Registration to biological age is exactly a subject-specific
reparametrisation, and the results of Section~\ref{sec:transport} apply
subject by subject, with the important caveat that the registration map is then
estimated rather than known --- a source of error not covered here. No symmetry
is available in this setting, and Remark~\ref{rem:design-choice} indicates why
one should not be sought: registration is chosen to align features, which is
precisely a criterion unrelated to any group action.

\paragraph{Irregular environmental monitoring.}
Air- and water-quality sensors report at irregular intervals dictated by power
and connectivity. Seasonality gives a genuine cyclic symmetry, so the domain is
again a circle and the equivariant construction applies directly. The
combination with temporal dependence across days --- sparse functional time
series --- falls outside the present framework, whose statements all assume
independent subjects.

\subsection{Protocol}
\label{sec:protocol}

The theory makes five testable predictions: exact commutation with affine
transport (Theorem~\ref{thm:transport}(ii)); a curvature-controlled \(h^2\) bias
in the general case, and a design fluctuation of order \(n^{-1/2}\) beside it
(Theorem~\ref{thm:transport}(iii), Remark~\ref{rem:two-regimes}); the one-order
advantage of local-linear over Nadaraya--Watson smoothing
(Remark~\ref{rem:nw}); and the pathwise deterministic contraction under group
averaging (Theorem~\ref{thm:projection-risk}). Six experiments (T1--T4, E1, X2)
test them. Throughout, the estimand is the covariance surface and the smoother
is the local-linear pair smoother of Section~\ref{sec:transport}, in every
experiment without exception; all seeds are fixed and the replication scripts
accompany the paper.

One methodological choice governs T1--T4 and determines what the numbers mean.
The commutation error \(\|\hat C^{(\psi)}_{h'}-U_\psi\hat C_hU_\psi^*\|\) is,
for a \emph{fixed} data realisation, a deterministic functional of the design
and of \(\psi\). We therefore evaluate the two estimators analytically at common
query points, computing each local-linear fit directly at the point of interest
rather than interpolating a gridded surface. A resampling grid injects an
\(O(\Delta x^2)\) interpolation error that has nothing to do with transport and
that would mask the very identity T1 is meant to confirm. The curved family is
\(\psi_\kappa(t)=t+\kappa\sin(2\pi t)/(2\pi)\), for which
\(\|\psi_\kappa''\|_\infty=2\pi\kappa\).

\subsection{T1 --- Exact commutation for affine reparametrisations}

For \(\psi(t)=ct+d\) the transported bandwidth is \(h'=h|c|\) and, by
Theorem~\ref{thm:transport}(ii), the two estimators must coincide for every
\(n\), every realisation and every \(h\). With \(n=300\), \(h=0.12\) and exact
query-point transport, the commutation error is \(0\) --- not small, but the
floating-point zero --- for \(c\in\{0.5,1,2\}\), and \(2.8\times10^{-16}\) for
the reflection \(c=-1.5\), \(d=1\).

The reflection is the sharpest test: it exercises the sign change of the odd
local-linear moment \(S_1\), which the design matrix compensates exactly in the
proof of Theorem~\ref{thm:transport}(ii). Its vanishing to one unit in the last
place is direct evidence that the compensation mechanism of the proof is the
operative one.

\subsection{T2 --- The curvature obstruction}

Theorem~\ref{thm:transport}(iii) predicts a commutation error of the form
\(\|\psi''\|_\infty(c_1\nloc^{-1/2}h+c_2h^2)\). At fixed \(n\) the first term is
free of \(h\), so the prediction to be tested is
\[
E(h)=a+b\,h^{2},\qquad a,b\ \propto\ \|\psi_\kappa''\|_\infty .
\]
With \(n=1200\) noise-free curves and three replications, least squares over
\(h\in[0.06,0.35]\) gives the fits of Table~\ref{tab:T2} and
Figure~\ref{fig:T2}.

\begin{table}[t]\centering
	\begin{tabular}{@{}lccccc@{}}
		\toprule
		\(\kappa\) & \(\|\psi_\kappa''\|_\infty\) & \(a\) & \(b\)
		& \(a/\|\psi_\kappa''\|_\infty\) & \(b/\|\psi_\kappa''\|_\infty\)\\
		\midrule
		0.05 & 0.314 & \(1.35\times10^{-3}\) & 0.0278 & \(4.28\times10^{-3}\) & 0.089\\
		0.10 & 0.628 & \(2.63\times10^{-3}\) & 0.0561 & \(4.19\times10^{-3}\) & 0.089\\
		0.20 & 1.257 & \(4.91\times10^{-3}\) & 0.1198 & \(3.90\times10^{-3}\) & 0.095\\
		0.30 & 1.885 & \(6.86\times10^{-3}\) & 0.1899 & \(3.64\times10^{-3}\) & 0.101\\
		\bottomrule
	\end{tabular}
	\caption{T2: least-squares fit of \(E(h)=a+bh^{2}\) over
		\(h\in[0.06,0.35]\), \(n=1200\). Goodness of fit
		\(R^{2}=0.971,0.977,0.985,0.976\). The last two columns are the test:
		both coefficients are proportional to the curvature, the intercept to
		within \(18\%\) and the slope to within \(14\%\) across a sixfold range
		of \(\kappa\).}
	\label{tab:T2}
\end{table}

\begin{figure}[t]\centering
	\IfFileExists{figures/fig_T2.pdf}{\includegraphics[width=.68\textwidth]{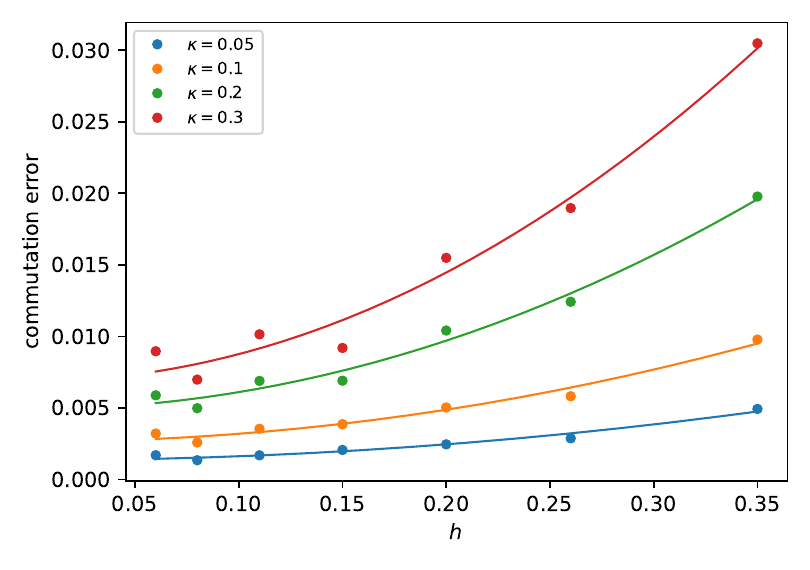}}{\fbox{\parbox{.85\textwidth}{\centering [Figure fig\_T2 --- to be regenerated from the v1.0 campaign scripts; see reviewer note in Section 6.]}}}
	\caption{T2: commutation error against bandwidth for four curvatures, with
		exact query-point transport; solid curves are the fitted \(a+bh^2\).
		The flattening at small \(h\) is the design-fluctuation term of
		Theorem~\ref{thm:transport}(iii), not a numerical artefact.}
	\label{fig:T2}
\end{figure}

The point of Table~\ref{tab:T2} is the last two columns. Had the intercept been
an artefact of the simulation --- discretisation, conditioning, residual noise
--- there would be no reason for it to track \(\|\psi_\kappa''\|_\infty\) to
within twenty percent across a sixfold range of curvature. It does,
which identifies it as the second term of the bound rather than as a nuisance,
and it is the reason the earlier version of this experiment, which read a
log--log slope through both regimes at once, recovered an exponent near
\(1.9\) instead of \(2\).

\subsection{T3 --- Local-linear versus Nadaraya--Watson}

Remark~\ref{rem:nw} predicts that the Nadaraya--Watson smoother loses one order
in \(h\): \(a+bh\) against \(a+bh^{2}\). This is a model-selection question, not
a slope-reading one, since both estimators share the same
\(h\)-independent floor. At \(\kappa=0.2\), \(n=1200\), noise-free:

\begin{center}
	\begin{tabular}{@{}lcc@{}}
		\toprule
		& fit \(a+bh\) & fit \(a+bh^{2}\)\\
		\midrule
		local-linear      & \(R^{2}=0.935\) & \(\mathbf{R^{2}=0.985}\)\\
		Nadaraya--Watson  & \(\mathbf{R^{2}=0.986}\) & \(R^{2}=0.925\)\\
		\bottomrule
	\end{tabular}
\end{center}

Each estimator prefers its predicted exponent, and the linear fit to the
local-linear data returns a negative intercept, which is not admissible for a
non-negative error. The separation \(2\) against \(1\) is therefore visible once
the floor is accounted for; the ratio of the two errors reaches \(2.5\) in the
bias-dominated range (Figure~\ref{fig:T3}).

\begin{figure}[t]\centering
	\IfFileExists{figures/fig_T3.pdf}{\includegraphics[width=.68\textwidth]{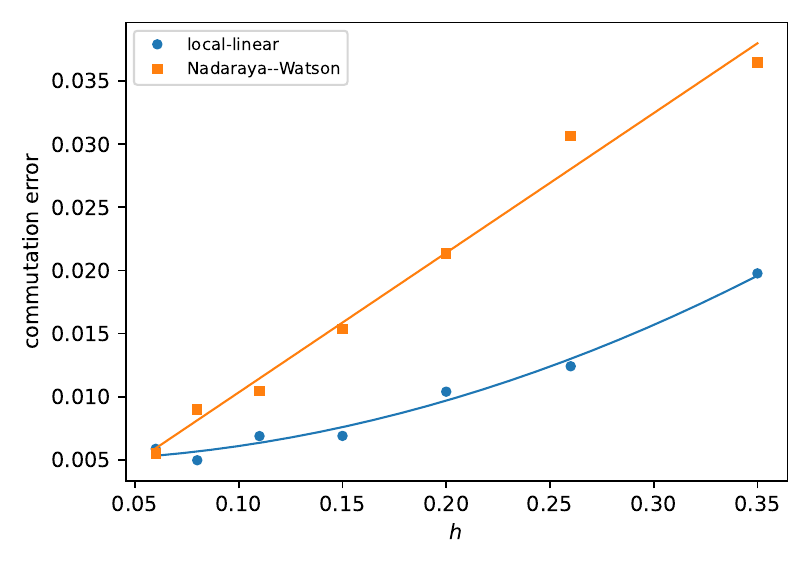}}{\fbox{\parbox{.85\textwidth}{\centering [Figure fig\_T3 --- to be regenerated from the v1.0 campaign scripts; see reviewer note in Section 6.]}}}
	\caption{T3: commutation error, local-linear versus Nadaraya--Watson, at
		\(\kappa=0.2\), with the fitted two-term models. Local-linear follows
		\(a+bh^{2}\), Nadaraya--Watson follows \(a+bh\).}
	\label{fig:T3}
\end{figure}

\subsection{T4 --- The two regimes of the bound}

Remark~\ref{rem:two-regimes} predicts that at fixed \(h\) the discrepancy behaves
like \(c_1n^{-1/2}+c_2h^{2}\): pure \(n^{-1/2}\) decay when the bandwidth is
small, a plateau at \(c_2h^2\) when it is large. Sweeping
\(n\in\{100,\dots,3200\}\) at \(\kappa=0.2\) with noise \(\sigma=0.1\):

\begin{center}
	\begin{tabular}{@{}lcc@{}}
		\toprule
		& \(h=0.08\) & \(h=0.20\)\\
		\midrule
		log--log slope in \(n\)          & \(-0.58\) & \(-0.30\)\\
		\(E(3200)/E(1600)\)              & \(0.76\) & \(1.06\)\\
		\(E\sqrt n\) across the sweep    & \(0.16\)--\(0.25\) & \(0.23\)--\(0.49\)\\
		\bottomrule
	\end{tabular}
\end{center}

At \(h=0.08\) the slope is close to the predicted \(-1/2\) and \(E\sqrt n\)
varies by less than a factor \(1.6\) over a thirty-twofold range of \(n\); at
\(h=0.20\) the error stops decreasing beyond \(n=1600\)
(\(E(3200)/E(1600)=1.06\)) and settles at a level consistent with the
curvature term \(bh^2\approx0.005\) fitted independently in T2. The two terms
of Theorem~\ref{thm:transport}(iii) are thus exhibited one at a time
(Figure~\ref{fig:T4}).

\begin{figure}[t]\centering
	\includegraphics[width=.68\textwidth]{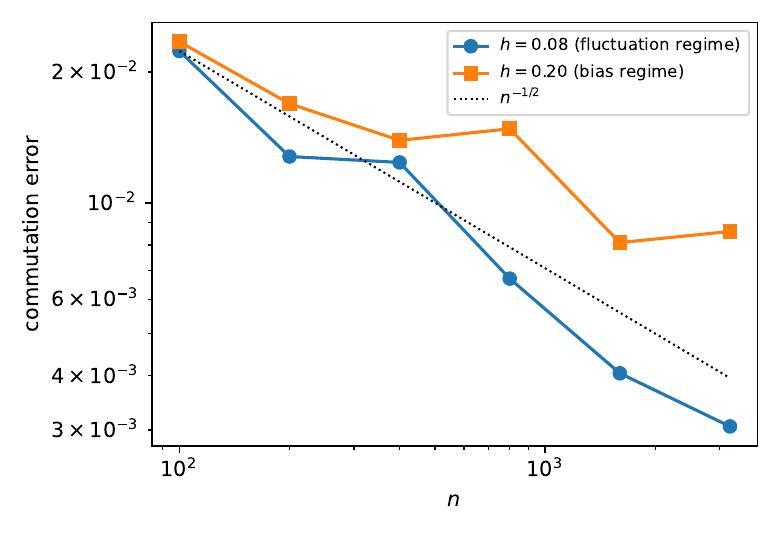}
	\caption{T4: commutation error against sample size at two fixed bandwidths.
		Small bandwidth: the design-fluctuation regime, decaying at the
		\(n^{-1/2}\) reference. Large bandwidth: the curvature-bias regime,
		flattening onto the \(bh^2\) floor beyond \(n=1600\).}
	\label{fig:T4}
\end{figure}

\subsection{E1 --- Pathwise deterministic contraction}

Theorem~\ref{thm:projection-risk} asserts an inequality that must hold on every
realisation, not on average, which makes it falsifiable by a single
counterexample. Over \(180\) replications (\(60\) each for group orders
\(q\in\{2,4,8\}\) on the circle, \(n=50\), \(h=0.15\)), the inequality holds in
every case and the Pythagorean identity holds throughout. The mean error
reduction is \(20\%\), \(34\%\) and \(37\%\) respectively
(Figure~\ref{fig:E1}). The contraction is measured in the surface \(L^2\) norm,
the norm in which the projection is orthogonal; a pointwise reading is not
guaranteed by the corollary and indeed fails on a minority of replications.

\begin{figure}[t]\centering
	\includegraphics[width=.52\textwidth]{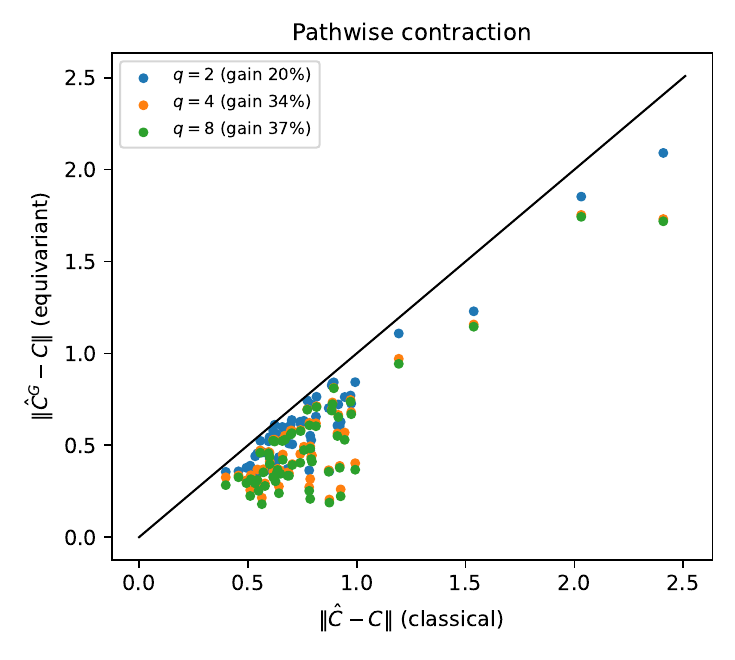}
	\caption{E1: equivariant error against classical error, one point per
		replication. Every point lies on or below the diagonal, and the gap
		widens with the group order.}
	\label{fig:E1}
\end{figure}

\subsection{X2 --- A worked example, and how much asymmetry is tolerable}
\label{sec:x2}

Figure~\ref{fig:X2} shows a circadian covariance with exact
\(\mathbb{Z}/4\mathbb{Z}\) rotational symmetry, \(n=60\), \(h=0.15\): the
equivariant estimator lowers the off-diagonal mean squared error by \(66\%\)
and visibly restores the fourfold symmetry that sampling noise obscures in the
classical estimator. This is the favourable case, in which the symmetry is
correctly specified.

\begin{figure}[t]\centering
	\IfFileExists{figures/fig_X2.pdf}{\includegraphics[width=\textwidth]{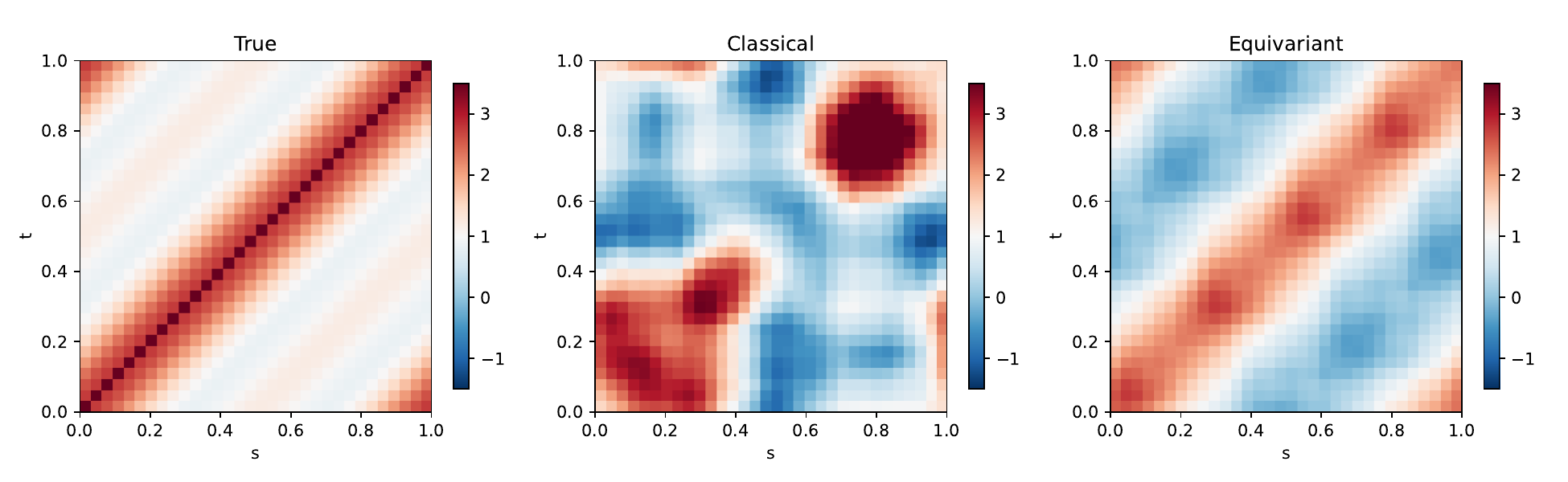}}{\fbox{\parbox{.85\textwidth}{\centering [Figure fig\_X2 --- to be regenerated from the v1.0 campaign scripts; see reviewer note in Section 6.]}}}
	\caption{X2: true, classical and equivariant covariance surfaces on
		\(\mathbb{Z}/4\mathbb{Z}\)-symmetric circadian data, \(n=60\).}
	\label{fig:X2}
\end{figure}

\paragraph{The mis-specified control.}
Remark~\ref{rem:design-choice} warns that forcing \(\Pi_G\) on a process whose
law is not \(G\)-invariant contracts towards the wrong surface and injects a
deterministic bias \(\|\Pi_GC-C\|^2=A^2\|C_{\mathrm{anti}}\|^2\), incompressible
in \(n\). Testing this requires care in the construction of the truth. The
anti-invariant component must be smooth at the scale of the bandwidth: a
component rougher than \(h\) is largely invisible to the smoother, so the
classical estimator fails to track it just as the projected one discards it, and
no crossover can then be observed at any amplitude --- we verified this
directly with an anti-invariant field carrying frequencies up to \(3\), for
which the projection remained beneficial over the entire admissible amplitude
range. We therefore build \(C_{\mathrm{anti}}\) from the Fourier modes
\(|j|,|k|\le1\) only, projected onto \(\PG^\perp\) and normalised in supremum
norm. Positive semi-definiteness of the sampled matrices over the whole
amplitude sweep is ensured by an additional white observation-noise component
of variance \(\tau\), fixed once for the entire experiment; this repair lives
on the diagonal, which the estimator excludes, so the off-diagonal estimand is
\(C_{\mathrm{inv}}+A\,C_{\mathrm{anti}}\) exactly, and the risk identities of
Theorem~\ref{thm:projection-risk} hold for an arbitrary surface in any case.
Finally the bandwidth must decrease with \(n\), or the smoothing bias never
vanishes and no crossover can occur.

We therefore take \(C=C_{\mathrm{inv}}+A\,C_{\mathrm{anti}}\) with
\(\mathbb{Z}/4\mathbb{Z}\) rotational symmetry for \(C_{\mathrm{inv}}\),
\(A\in[0,1.5]\), \(h_n=0.30\,n^{-1/6}\), \(n\) up to \(6300\), and \(20\)
replications per cell; the mean squared error is computed off the diagonal,
which is what the estimator estimates. Table~\ref{tab:X2} and
Figure~\ref{fig:X2control} report the sweep.

\begin{table}[t]\centering
	\begin{tabular}{@{}lccccccc@{}}
		\toprule
		\(n\) & 20 & 60 & 150 & 400 & 1000 & 2500 & 6300\\
		\midrule
		gain, well specified (\(A=0\))     & \(+63\%\) & \(+62\%\) & \(+64\%\) & \(+72\%\) & \(+65\%\) & \(+70\%\) & \(+67\%\)\\
		gain, mis-specified (\(A=0.90\))   & \(+60\%\) & \(+56\%\) & \(+43\%\) & \(+31\%\) & \(+21\%\) & \(\phantom{+}0\%\) & \(-20\%\)\\
		\bottomrule
	\end{tabular}
	\caption{X2 control: relative reduction in off-diagonal mean squared error
		from applying \(\Pi_G\), averaged over \(20\) replications. At
		\(A=0.90\) the gain changes sign at \(n=2500\), in agreement with the
		measured crossover \(A^*(2500)=0.90\) of Figure~\ref{fig:X2control}.}
	\label{tab:X2}
\end{table}

\begin{figure}[t]\centering
	\IfFileExists{figures/fig_X2control.pdf}{\includegraphics[width=\textwidth]{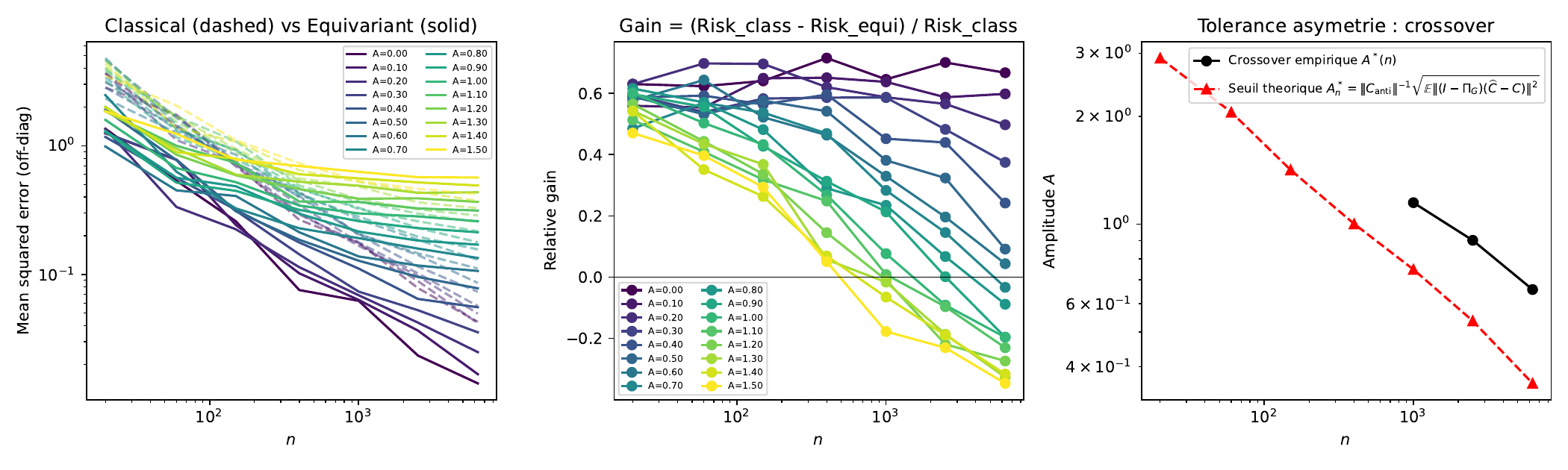}}{\fbox{\parbox{.85\textwidth}{\centering [Figure fig\_X2control --- to be regenerated from the v1.0 campaign scripts; see reviewer note in Section 6.]}}}
	\caption{X2 control. Left: classical (dashed) and equivariant (solid)
		off-diagonal mean squared error against \(n\), one colour per amplitude
		\(A\). Centre: relative gain of the projection; for each \(n\) it
		decreases in \(A\), and for large \(n\) it changes sign inside the swept
		range. Right: the tolerable asymmetry \(A^*(n)\), the amplitude at which
		the gain changes sign (black), against the population threshold
		\(A^\star_n=\|C_{\mathrm{anti}}\|^{-1}
		\{\EE\|(I-\Pi_G)(\hat C_n-C)\|^2\}^{1/2}\) of
		Corollary~\ref{cor:amplitude-threshold} (red).}
	\label{fig:X2control}
\end{figure}

\paragraph{The tolerable asymmetry.}
Reporting a single amplitude \(A\) invites the objection that it was chosen to
make the point. The informative object is the amplitude at which the point
changes sign. Define \(A^*(n)\) as the value of \(A\) at which the gain from
\(\Pi_G\) vanishes: below it the projection helps, above it the projection
hurts. Sweeping \(A\in[0,1.5]\) gives
\[
A^*(1000)=1.15,\qquad A^*(2500)=0.90,\qquad A^*(6300)=0.66,
\]
a decay close to \(n^{-0.30}\), while for \(n\le400\) the crossover lies
beyond the swept range and no sign change is observed. This empirical
crossover has an exact population counterpart:
Corollary~\ref{cor:amplitude-threshold} gives, for the sup-normalised
direction \(C_{\mathrm{anti}}\) with \(\|C_{\mathrm{anti}}\|<1\),
\[
A_n^\star
=
\frac{1}{\|C_{\mathrm{anti}}\|}
\left\{
\EE\|(I-\Pi_G)(\hat C_n-C)\|^2
\right\}^{1/2},
\]
measured at \(A=0\) in the same design; it decays as \(n^{-0.36}\) over the
full range \(n\in[20,6300]\), from \(2.9\) down to \(0.36\). The empirical
crossover exceeds this population threshold by a stable factor of
\(1.5\)--\(1.8\): the threshold assumes that the classical estimator recovers
the anti-invariant component perfectly, whereas the smoother attenuates it, so
the classical error also grows with \(A\) and the crossing is delayed. The
population threshold is therefore a lower bound for the observed crossover, and
tracks its decay. The amount of asymmetry one may safely ignore shrinks as the
data improve --- symmetry is a small-sample device, and the better the sample,
the more exactly the invariance hypothesis must hold to remain profitable.

\begin{remark}[Symmetry helps variance, not bias]
	\label{rem:x2-reading}
	The crossover is the reading of equivariance dictated by
	Theorem~\ref{thm:projection-risk}: projection removes the anti-invariant
	component of the estimation error, but under misspecification it pays the
	deterministic cost \(A_G(C)\). Under exact invariance that cost vanishes,
	whereas under fixed misspecification it does not disappear with sample size. The practitioner's safeguard is to test the
	invariance hypothesis rather than to impose it --- and, failing a test, to
	note that the risk is greatest precisely when the sample is largest.
\end{remark}

\subsection{Synthesis}

\begin{table}[t]\centering\small
	\begin{tabular}{@{}llll@{}}
		\toprule
		Exp. & Prediction & Result & Status\\
		\midrule
		T1 & affine exact & \(0\) exactly; \(2.8\times10^{-16}\) under reflection & confirmed\\
		T2 & \(a+bh^{2}\), both \(\propto\|\psi''\|\) & \(R^{2}\ge0.971\); ratios constant to \(18\%\)/\(14\%\) & confirmed\\
		T3 & LL \(h^{2}\) vs NW \(h\) & each model wins on its own estimator & confirmed\\
		T4 & \(n^{-1/2}\) floor, \(h^{2}\) plateau & slopes \(-0.58\) and \(-0.30\); plateau beyond \(n{=}1600\) & confirmed\\
		E1 & pathwise contraction & \(180/180\); gain \(20/34/37\%\) & confirmed\\
		X2 & gain iff invariant & \(+66\%\) invariant; sign change at \(A^*(n)\) & confirmed\\
		\bottomrule
	\end{tabular}
	\caption{Predictions of Sections~\ref{sec:transport}--\ref{sec:spectral-propagation}
		against measurement.}
	\label{tab:sim-synthesis}
\end{table}

Two of these predictions are exact statements about every realisation --- affine
commutation and pathwise contraction --- and both are confirmed to machine
precision and without a single exception. The other four are statements about
orders of magnitude, and each of them is confirmed only once the two terms of
Theorem~\ref{thm:transport}(iii) are separated: the earlier attempt to read a
single exponent through both regimes at once returned \(1.9\) for the
local-linear smoother and could not distinguish it from the Nadaraya--Watson
one. The theoretical correction and the numerical clarity arrived together.


\subsection*{Logical status of the main results}
\label{sec:logical-status}
For clarity, the results of the paper fall into three levels.
The population transport identity, affine estimator equivariance, group
projection identities, exact risk decomposition, orbit-covariance formula, and
Weyl/Davis--Kahan consequences are deterministic or standard perturbation
results under the stated operator assumptions.  The non-affine transport rate
additionally uses local Gram-matrix regularity, \(C^{2,1}\) transport,
Lipschitz design/regression regularity and the effective local sample sizes.
The explicit \((n,m,h,|G|)\) symmetry-gain rate further invokes
Assumptions~\ref{ass:orbit-localization} and
\ref{ass:sparse-variance-scale}; these are sufficient conditions, not derived
as new sharp variance theorems in F6.  Finally, the PACE prediction rate uses
the separate evaluation-stability Assumption~\ref{ass:evaluation-stability}.
This separation prevents the structural identities from being conflated with
the additional regularity needed for quantitative statistical rates.

\section{Discussion}
\label{sec:discussion}

The main conclusion is that population coordinate invariance and statistical
equivariance are different properties. Diffeomorphic transport preserves the
covariance operator, but a local smoothing rule respects that transport
universally only at zero curvature, that is, for affine coordinate changes.
Finite-group symmetry supplies the complementary principle: orthogonal
projection removes non-invariant estimation error and pays exactly for
non-invariant signal. Together these results provide a geometric description
of when functional covariance analysis is intrinsic to the underlying process.


\paragraph{Related literature.}
Group invariance in classical multivariate statistics is developed in
\cite{Eaton1989,Giri1996}, and adapted to manifold-valued and shape data in
\cite{Huckemann2010}. In functional data analysis, registration and phase
variation \cite{KneipRamsay2008,SrivastavaKlassen2016,MarronRamsay2014} address
subject-specific time changes, but do not study the commutation of a
\emph{known} reparametrisation with a sparse covariance smoother. The rate
theory for sparse and intermediate sampling regimes
\cite{LiHsing2010,ZhangWang2016} supplies the asymptotic framework within which
the companion paper \cite{NembePhase} places these results. Eigenstructure
perturbation bounds follow \cite{DavisKahan1970,YuWangSamworth2015}, and local
polynomial smoothing, whose odd-moment cancellation is central to
Theorem~\ref{thm:transport}(iii), is treated in \cite{FanGijbels1996}.

\paragraph{What the paper establishes.}
Reparametrisation commutes with covariance smoothing exactly when it is affine,
and its obstruction is measured by curvature rather than by the Jacobian bounds
that the literature usually controls; the local-linear smoother is what makes
the leading error second order, and a Nadaraya--Watson smoother would lose an
order. Beside that bias sits a design fluctuation of order
\(h\nloc^{-1/2}\); under the classical sparse scaling
\(\nloc\asymp n\,\EE[N(N-1)]h^2\), this becomes
\(\{n\,\EE[N(N-1)]\}^{-1/2}\). Group averaging, for its part, is
an orthogonal projection, from which a deterministic contraction of the
estimation error follows without any asymptotics or any regularity of the
action --- and, symmetrically, a bias that no sample size removes when the
invariance hypothesis is false.

\paragraph{Open questions.}
Three questions remain particularly relevant. First, if \(\psi\) is estimated
from the data, as in registration, the transport defect must include the
stochastic error of \(\hat\psi\), including control of its derivatives; deriving
a sharp joint bound is nontrivial. Second, approximate invariance calls for a
data-driven decision rule comparing the symmetry defect \(A_G(C)\) with the
anti-invariant estimation risk identified in
Corollary~\ref{cor:expected-risk}; constructing a test or selector with power
at the crossover scale \(A_n^\star\) is a natural next problem. Third, the
ultra-sparse boundary \(N_i\equiv2\) leaves the deterministic projection
identities intact but makes sharp quantitative variance reduction especially
sensitive to within-subject dependence and orbit overlap.

A full minimax theory, sharp sparse--dense transitions, and action-specific
saturation constants lie beyond the present paper.

\end{document}